\newcommand{\C}{{\mathbb C}}
\newcommand{\Z}{{\mathbb Z}}
\renewcommand{\H}{{\mathbb H}}
\newtheorem{theorem}{Theorem}[section]
\newtheorem{lemma}[theorem]{Lemma}
\newtheorem{remark}[theorem]{Remark}
\newtheorem{example}[theorem]{Example}
\newtheorem{corollary}[theorem]{Corollary}
\newtheorem{proposition}[theorem]{Proposition}
\newtheorem{definition}[theorem]{Definition}
\def\cal{\mathcal}
\newcommand{\calh}[0]{{\cal H}}
\newcommand{\calf}[0]{{\cal F}}
\newcommand{\call}[0]{{\cal L}}
\newcommand{\calg}[0]{{\cal G}}
\newcommand{\calb}[0]{{\cal B}}
\newcommand{\cala}[0]{{\cal A}}
\newcommand{\cale}[0]{{\cal E}}
\newcommand{\calk}[0]{{\cal K}}
\newcommand{\cals}[0]{{\cal S}}
\newcommand{\calm}[0]{{\cal M}}
\newcommand{\calt}[0]{{\cal T}}
\author[B. Burgstaller]{Bernhard Burgstaller}
\subjclass{19K35; 20L05, 20M18}
\keywords{$KK$-theory, universal property, split exact, equivariant, groupoid, inverse semigroup}
\email{bernhardburgstaller@yahoo.de}
\begin{document}

\title[Aspects of equivariant $KK$-theory]{Aspects of equivariant $KK$-theory in its generators and relations picture}

\begin{abstract}

We give a new proof of the universal property
of 
$KK^G$-theory with respect to stability, homotopy invariance and split-exactness for $G$ a locally compact group,
or 
a 
locally compact (not necessarily Hausdorff) groupoid, or a countable inverse semigroup which is relatively short and conceptual.
Morphisms 
in the generators and relations 
picture of
$KK^G$-theory 
are brought to a particular simple form.
 
\if 0
We allow $G$ to be a 
group, 
a locally compact (not necessarily Hausdorff) 
groupoid,
or
an inverse semigroup.
\fi
\end{abstract}

\maketitle 



\section{Introduction}

In 
\cite{kasparov1981} Kasparov has introduced $KK$-theory for $C^*$-algebras, a bivariant $K$-theory fusing $K$-homology with $K$-theory.
Afterwards, Cuntz found another description of $KK$-theory by interpreting $KK$-theory elements
as quasihomomorphisms and showing that $KK$-theory elements act on 
stable, homotopy invariant, split-exact 
functors 
from the $C^*$-category to abelian groups,
%
see the relevant papers \cite{cuntz1983,cuntz1984,cuntz1987}.
Based on 
this categorial finding, Higson \cite{higson}
proved that every stable, homotopy invariant, split-exact functor from the $C^*$-category
to an additive category 
uniquely `extends' to $KK$-theory. 
%
%
\if 0 
which comes out when one makes $C^*$-theory stable, homotopy invariant and split exact.
\fi
Actually, Kasparov considered the $C^*$-algebras to be $G$-equivariant with respect to a compact group $G$, and generalized this in 
\cite{kasparov1988} to locally compact, second-countable groups.
\if 0
Cuntz and Higson's findings were done non-equivariantly, and
\fi 
Cuntz and Higson's findings were done non-equivariantly, and
in \cite{thomsen} 
Thomsen generalized 
Higson's 
result 
to $G$-equivariant $KK$-theory.
%
\if 0
The difference to the equivariant case consists
essentially by simple matrix trick which goes back at least to a paper of Connes.
\fi

By Cuntz and Higson's findings
it is 
evident that the category of equivariant $KK$-theory restricted to the category of separable $C^*$-algebras
may be expressed by generators and relations,
where the generators are the $C^*$-homomorphisms and other synthetical inverse morphisms.
We have described this in more details in
\cite{bgenerators} and called the category $GK$
for simplicity.
 %
In \cite{buniversal} we  
have 
shown that $S$-equivariant $KK$-theory for a countable 
discrete inverse semigroup $S$ also satisfies the universal property.
The proof works also almost unchanged for a locally compact (not necessarily Hausdorff)
groupoid, see corollary \ref{corollary22}.

In \cite{bremarks} it was noted that in $GK$-theory morphims may be written as a short product 
$$a e^{-1} \Delta_s b  \Delta_t f^{-1}$$
where $a,b$ are homomorphisms, $e,f$ are corner embeddings and $\Delta_s,\Delta_t$ are synthetical splits of split exact sequences.
\if 0
To show this one uses the functor constructed in the proof of the universal property to interpret morphisms in $GK$ by morphisms in $KK$-theory. 
\fi
To show this one takes a morphism in $GK$-theory, interprets it as a Kasparov element in $KK$-theory and goes back to $GK$-theory by the functor constructed in the proof of the universal property of $KK$-theory.
\if 0
Here, $KK$-theory is used as a black box and it is verschl how the the simplifaction of an expression in $GK$-theory could be done in $GK$-theory itself.
\fi 

\if 0
We were curious how this works in $GK$-theory and to do this is the aim of this work.
We must say it pays off manyfold. One gets a 
another
point of view of $KK$-theory 
where its definition comes out naturally and suggest itself in $GK$-theory, 
see lemmas \ref{lemma272}.(iv) and
\ref{lemma812},  
whose definition itself is highly natural and simple.
\fi
In this paper we explore the category $GK$ further by simplifying the expression of morphisms in $GK$ directly in $GK$.
This is done 
in an equivariant setting 
with respect to a 
locally compact (not necessarily Hausdorff) groupoid or inverse semigroup $G$.
The whole machinery we present is category theoretically very visual and close to ordinary $C^*$-theory.
\if 0
In our experince it is easier to work with than with Kasparov cycles.
One starts with a diagram and works on it 
towards the direction what one wants get in a visual 
categorial
fashion.
\fi
On the other hand, the picture in $GK$-theory is still very close to the Kasparov picture. 
Actually we solve all the 
harder problems by using $KK$-theory, particularly the Kasparov technical theorem encoded in the Kasparov product.
Together with many ideas taken from $KK$-theory, for example the Kasparov stabilization theorem.
\if 0
 for Hilbert modules and the encoding of a $*$-homomorphism into the space of 
adjoint-able operators 
on an internal tensor product
of Hilbert modules.
\fi

Beside these benefits, we get a new 
proof of the universal property of equivariant $KK$-theory as a byproduct.
\if 0
The proof appears to be systematically more transparent, easier and 
\fi
Moreover we can 
improve the word length of the above product to
$$a e^{-1} \Delta_s f^{-1}$$

Note that 
we have not optimized the exposition to achieve as fast as possible the main result
theorem \ref{theorem212} and as a corollary the universal property of $KK$-theory, corollary \ref{cor415}.
If one is interested in 
these proofs 
one may 
save several pages. 
\if 0
Indeed one may drop the general double split exact sequence approach of section \ref{sec5} and
\fi
Indeed one
 would need of sections \ref{sec5} to \ref{sec8}
only a few lemmas
(lemmas \ref{lemma21}, \ref{lemma51}, \ref{lemma272}, \ref{lemma100}, \ref{lemma215},
\ref{lemma86}, \ref{lemma91}, \ref{lemma123}).
As remarked in section \ref{sec11}, 
its contained proofs could be extremely 
cut short. 
\if 0
done
by 
directly the results from $KK$-theory
with the aid of lemma .
Section \ref{sec2} 
is not necessary.
\fi
The proof of the universal property actually really begins in section \ref{sec9}.



\if 0
Two aspects of the known proof of the universal property of $KK$-theory will be overtaken by our approach: 
the use of double split exact sequences, which is by , and the matrix trick to change the $G$-action 
on a non-equivariantly unchanged algebra,
 \fi

\if 0
We prove by induction that a word in $GK$ can be presented in standard form, that is as a 
morphism associated to a double split exact sequence in $C^*$ plus 
the inverse of a corner embedding. 
\fi

We give a short overview of the paper.
In section \ref{sec2}
we remark that
$KK^\calg$ for a locally compact (not necessarily Hausdorff) groupoid $\calg$
satisfies the universal property by the same proof as for inverse semigroups.
In section \ref{sec3} we briefly recall $GK$-theory, and in section \ref{sec4} we introduce the functor from $GK$-theory to $KK^G$-theory.
In section \ref{sec5} we define the important concept of double split exact sequences in the equivariant setting.
\if 0 
In the non-equivariant setting, the closely related concept of quasihomomorphisms 
(or double split exact sequences in Higson \cite{higson})
applied to split-exact functors is due to Cuntz \cite{cuntz1984},
\fi
\if 0
The idea to use non-equivariant double split exact sequences in $KK$-theory is essentially referable to Cuntz \cite{cuntz1984}, 
\fi
The idea to use non-equivariant double split exact sequences in $KK$-theory is essentially due to Cuntz \cite{cuntz1984}, 
and the additional matrix trick to handle equivariance 
goes back at least to Connes \cite{connes1973} and was used by Thomsen \cite{thomsen}.
Fundamental is the construction of 
split exact sequences in section \ref{sec6}, 
also 
used by Kasparov 
\cite{kasparov1981}.
In section \ref{sec7} we discuss $G$-actions on 
a 
$2 \times 2$-matrix algebra used in our framework.
In section \ref{sec8} we demonstrate various computations in $GK$-theory, including sideways which are not relevant for the main results.
Actually, in lemmas \ref{lemma272} and \ref{lemma812} we see how Kasparov's definition of the $KK$-groups come out naturally and suggest itself in our framework.
In section \ref{sec9} we introduce the functor from $KK^G$-theory to $GK$-theory, and in section \ref{sec10} we detect the first relations to the functor in the other 
direction.
Section \ref{sec11} shows an important concept by 
Kasparov, and technically simplified by Connes-Skandalis, to prepare a pushout-construction used
in the two 
consecutive sections.
In section \ref{sec12} we use the Kasparov product for the fusion of a synthetical split with a double split exact sequence.
We do not need the Kasparov product any more
in section \ref{sec12}
to fuse analogously a double split exact sequence with the inverse of a corner embedding.
In section \ref{sec13} we show by induction on the length of a word of a morphism in $GK$-theory that it can be simplified to the simple form as stated above.
As a corollary we obtain the universal property of equivariant $KK$-theory.

\section{The universal property of $KK^\calg$ for groupoids $\calg$}			\label{sec2}

Let $G$ be a second countable locally compact group,
a second countable locally compact (not necessarily Hausdorff)
groupoid, or a countable inverse semigroup.
The category of separable $G$-equivariant $C^*$-algebras and their $G$-equivariant homomorphisms
is denoted by $C^*$. 
We often use the term `non-equivariant' when we want to ignore any $G$-action or $G$-equivariance.
All Hilbert modules are assumed to be countably generated,
and all $C^*$-algebras are separable.

The $C^*$-algebra of adjoint-able operators on a Hilbert $B$-module $\cale$ is denoted by $\call_B(\cale)$ or $\call(\cale)$ and its 
two-sided closed ideal of `compact' operators by
$\calk_B(\cale)$. 

The reference for group equivariant $KK$-theory is Kasparov \cite {kasparov1988}, for groupoid equivariant $KK$-theory it is Le Gall \cite{legall},
and for inverse semigroup equivariant $KK$-theory it is \cite{bsemimultikk}, or see \cite{buniversal}
for a 
summary of the definitions.
(We use the slightly adapted `compatible' version of  equivariant $KK$-theory as 
in \cite{buniversal}.)
The category of $G$-equivariant $C^*$-algebras
with the Kasparov groups as morphisms 
is denoted by $KK^G$.

In this paper we write compositions of morphisms in a category and compositions of functions from 
left to right. That is, for instance, if $f:A \rightarrow B$ and $g : B \rightarrow C$ are maps, then we write
$f g$ for $g \circ f$, where composition operator $\circ$ is used in the usual sense from right to left.
This will go as far as that we write $fg(x)$ for $g(f(x))$.
In spaces of operators like $\call(\cale)$ we use the multiplication in the usual sense, that is,
$S T$ means $S \circ T$ for $S,T \in \call(\cale)$, but to avoid confusion, we mostly write
$S \circ T$.

For a $G$-action $S$ on a Hilbert module $\cale$ 
we write ${\rm Ad}(S)$ for the $G$-action $\gamma_g(T) = S_g \circ T \circ S_{g^{-1}}$ 
on $\call(\cale)$.
For a unitary $U \in A$ we write
${\rm Ad}(U)$ for the 
$*$-automorphism 
$f(a)=U a U^*$ on $A$.

\if 0
Throughout all $C^*$-algebras are $G$-equivariant. If we have no $G$-action or we want to ignore the $G$-action on an algebra or $G$-equivariance of a homomorphism then we use the term `non-equivariant'.
\fi

If we notate 
grading in a Kasparov element as for example in 
$[\pi_1 \oplus \pi_0,\cale_1 \oplus \cale_0,F] \in KK^G(A,B)$, then the first notated summand $\cale_1$ always means the odd graded part, 
and the second summand $\cale_0$ the 
even graded part.
We also write $[\sigma_1 + \sigma_0,\cale_1 \oplus \cale_0,F]$, where then $\sigma_1(a) = \pi_1(a) \oplus 0$ and $\sigma_0 (a)= 0 \oplus \pi_0(a)$.

A map into multiplication operators 
like the canonical embedding $f:A \rightarrow \call_A(A)$ is often sloppily denoted by ${\rm id}$, or written as $f(a)=a$.
The identity map is often denoted by $1$ (for example in $T \otimes 1$), or by ${\rm id}$.
\if 0
To get simple notation, sometimes a unit $1$ is used in an algebra which may not have a unit (for instance in $\theta^{U \square 1}$). Here one may go in any unitization. 
\fi

For a non-equivariant $C^*$-algebra $A$ we write $e_{11},e_{22} : A \rightarrow M_2(A)$ for the two corner embeddings 
into the upper left and lower right corner respectively.

We denote $A \otimes (C_0([0,1]),{\rm triv})$ by $A[0,1]$, where `triv' means trivial $G$-action.
\if 0
Corner embeddings,
$\call, \calk$

non-equivariant
\fi

In \cite{buniversal} we have proven the universal property
of $G$-equivariant $KK$-theory when $G$ is a countable discrete inverse semigroup.
In this section we remark that the proof works verbatim also when $G$ is a locally compact,
not necessarily Hausdorff groupoid.

Indeed, let $\calg$ be a locally compact groupoid with base space $X$. 
At first we may consider it as a discrete
inverse semigroup $S$ by adjoining 
a zero element to $\calg$, i.e. 
set $S:= \calg \cup \{0\}$.

\if 0
$\calg$-actions on $C^*$-algebras on $C^*$-algebras, Hilbert modules and $KK^\calg$-theory can be defined as in le gall by using $C_0(X)$-structures.
Alternatively one may use the bundle picture of $C_0(X)$-$C^*$-algebras, see for example ,
In this picture, a $C_0(X)$-algebra $A$ is upper semicontinuius field over $X$. 
\fi 

A $\calg$-action $\alpha$ on $A$ is then fiber-wise just like an inverse semigroup $S$-action on $A$ (the zero element $0 \in S$ acts always as zero), with the additional property that it is continuous in the sense that
it forms a map $\alpha:s^* A \rightarrow r^* A$.
We cannot, as in inverse semigroup theory, say that $\alpha_{s s^{-1}}( A)$ is a subalgebra of $A$ ($s \in S$),
because this instead we would interpret as a fiber $A_{s s^{-1}}$ of $A$. But all computations done for inverse semigroups would be the same if we did it for a groupoid on fibers. 
That is why we need only 
take care that every introduced $\calg$-action 
is continuous.

But the introduced actions, or similar constructions are just:

\begin{itemize}

\item
Cocycles: The definition \cite[def. 5.1]{buniversal} has to be replaced by the analogous definition \ref{def021} below.

\item
Unitization: One replaces \cite[def. 3.3]{buniversal} by \cite{legall}.

\item
Direct sum, internal, external tensor product:
It is clear that these constructions are also continuous for groupoids.

\item
For an element $[T,\cale] \in KK^{\calg}(A,B)$ one has the condition that the bundle
$g \mapsto g(T_{s(g)}) - T_{r(g)} \in \calk(\cale_{r(g)})$ is in $r^* \calk(\cale)$. Here one has also additionally to check continuity.

\end{itemize}

\begin{definition}				\label{def021}
{\rm
Let $(A,\alpha)$ be a $\calg$-algebra.
Set the $\calg$-action on $\call_A(A) \cong \calm(A)$ to be $\overline \alpha
 := {\rm Ad}(\alpha)$. 
An  $\alpha$-cocycle is a unitary $u$ in
$r^* \big (\calm(A) \big )$ such that
$$u_{gh}= \overline \alpha_g(u_h)$$
in $\calm(A)_{r(g)}$ for all $g,h \in \calg$ with
$s(g)=r(h)$.
}
\end{definition}


In this way it is (almost) clear that the results of \cite{buniversal} hold also in the locally compact (not necessarily Hausdorff) groupoid
equivariant setting.

\begin{corollary}			\label{corollary22}
Let $\calg$ be a locally compact 
(not necessarily Hausdorff) groupoid.
Then $KK^\calg$ is the universal stable, homotopy invariant and split exact category deduced from 
the 
category of $\calg$-equivariant, separable, ungraded $C^*$-algebras.
\end{corollary}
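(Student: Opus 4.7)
The plan is to transfer the proof of the universal property from the inverse semigroup case in \cite{buniversal} to the groupoid case, exploiting the observation already made in the text that a locally compact groupoid $\calg$ with base space $X$ can be regarded as an inverse semigroup $S = \calg \cup \{0\}$ by adjoining a zero, and that every fiber-wise computation is then identical. The only new content in the groupoid setting is to make sure that each $\calg$-action produced during the proof is continuous in the Le Gall sense, i.e.\ defines an isomorphism $s^* A \to r^* A$ of pullbacks, rather than merely a family of fiber automorphisms.

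First I would walk through \cite{buniversal} and list, at each stage, the auxiliary $C^*$-algebras, Hilbert modules, adjoint-able operators and cocycles that are constructed from the input data. The standard functorial operations — direct sum, unitization, internal and external tensor product, passage to multiplier algebras — all preserve $C_0(X)$-structures and continuity of groupoid actions in Le Gall's framework \cite{legall}. Hence for these constructions, continuity of the resulting $\calg$-action is automatic once the inputs are continuous, and the algebraic manipulations underlying every lemma in \cite{buniversal} go through unchanged on each fiber.

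Second I would address the two genuinely non-cosmetic items flagged in the bullet list preceding Definition \ref{def021}. The cocycle condition needs reformulation: I use Definition \ref{def021}, where a cocycle is now a unitary $u \in r^*(\calm(A))$ satisfying $u_{gh} = \overline{\alpha}_g(u_h)$ in $\calm(A)_{r(g)}$ for composable $g,h \in \calg$, and then check that every cocycle manufactured in \cite{buniversal} (for example those defining synthetical splits of split exact sequences) inherits $C_0(X)$-continuity from that of the input data. Similarly, for any candidate Kasparov cycle $[T,\cale]$ arising in the proof, the commutator bundle $g \mapsto g(T_{s(g)}) - T_{r(g)}$ must be verified to lie in $r^*\calk(\cale)$, again as a direct continuity check on the ingredients.

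The main obstacle is essentially bookkeeping: there is no new $KK$-theoretic idea to introduce, but one has to traverse the proof of \cite{buniversal} and verify $C_0(X)$-continuity at each of the (few) places where new equivariant structure is produced. Once this is carried out, the entire chain of lemmas and constructions of \cite{buniversal} — including the construction of the split exact functor from $C^*$ to the target category, the matrix trick for equivariance, and the homotopy invariance step — transfers verbatim, yielding the stability, homotopy invariance, split exactness and the universal property of $KK^\calg$ as stated.
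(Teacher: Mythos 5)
Your proposal is correct and follows essentially the same route as the paper: regard $\calg$ as the inverse semigroup $S=\calg\cup\{0\}$, note that all fiber-wise computations of \cite{buniversal} carry over unchanged, and verify continuity in the Le Gall sense only at the places where new equivariant structure is introduced (cocycles via definition \ref{def021}, unitization, direct sums and tensor products, and the condition $g \mapsto g(T_{s(g)})-T_{r(g)} \in r^*\calk(\cale)$ for Kasparov cycles). This matches the paper's argument in section \ref{sec2} point for point.
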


From now on,
if nothing else is stated, we assume that $G$ is an inverse semigroup.
This is almost invisible, except at least in
corollary \ref{lemma272}.(iv) and it is obvious how to adapt it to groupoids 
keeping the above remarks in mind.
\if 0
This is almost invisible, but at least in
corollary \ref{lemma272}.(iv) and it is obvious how to adapt it to groupoids and one keeps the above remarks in mind.
\fi

\section{$GK$-theory}

\label{sec3}


We are going to recall the definition of $GK$-theory
(``Generators and relations $KK$-theory", the group $G$ is not indicated, instead we may also write $GK^G$) 
for which we refer for more details to 
\cite{bgenerators}.
The split exactness axiom is slightly but equivalently altered,
see \cite[Lemma 3.7]{bremarks}. 


\begin{definition}		\label{def121}
{\rm

Let $GK$ be the following 
category.
Object class of $GK$ is the class of all separable $G$-algebras.

Generator 
morphism class is the collection of all $G$-equivariant $*$-homomorphisms $f:A \rightarrow B$ (with obvious source and range objects) and the collection of the following ``synthetical" morphisms:

\begin{itemize}

\item
For every equivariant corner embedding $e:(A,\alpha) \rightarrow (A \otimes \calk,\delta)$
($\delta$ need not be diagonal but can be any $G$-action) add a morphism called $e^{-1}: (A \otimes \calk,\delta) \rightarrow (A,\alpha)$.

\item
For every equivariant short split exact sequence
\begin{equation}			\label{splitexact}
\cals: \xymatrix{0 \ar[r] & (B,\beta)  \ar[r]^j & (M,\delta)  \ar[r]^f & (A,\alpha) \ar[r] \ar@<.5ex>[l]^s & 0}
\end{equation}
in $C^*$ add a morphism called
$\Delta_{\cals}: (M,\delta) \rightarrow (B,\beta)$
or $\Delta_s$ if $\cals$ is understood.

\end{itemize}

Form the free category of the above generators
together with free addition and substraction of morphims having same range and source 
(formally this is like the free ring generated by these generator morphisms, but one can only add 
and multiply if 
source and range fit together)
and divide out the following relations
to turn it into the category $GK$:

\begin{itemize}

\item
({\em $C^*$-category})
Set 
$g \circ f = f g$ for all $f \in C^*(A,B)$
and $g \in C^*(B,C)$.

\item
{(\em Unit})
For every object $A$,
${\rm id}_A$ is the unit morphism.

\item
({\em Additive category})
For all 
diagrams
$$\xymatrix{A  \ar[r]^{i_A} &  A \oplus B  \ar[r]^{p_B} 
\ar@<.5ex>[l]^{p_A}
&  A  \ar@<.5ex>[l]^{i_B} }$$
(canonical projections and injections) set
$1_{A \oplus B}= p_A i_A + p_B i_A$.

\item
({\em Homotopy invariance})
For all homotopies $f:A \rightarrow B[0,1]$ in $C^*(A,B)$ set $f_0 = f_1$.

\item
({\em Stability})
All corner embeddings $e$ are invertible with inverse $e^{-1}$.

\item
({\em Split exactness})
For all split exact sequences (\ref{splitexact}) set
\begin{eqnarray*}
&& 1_B   =  j \Delta_s  \\
&& 1_M   =    \Delta_s j + f s
\end{eqnarray*}

\end{itemize}

}
\end{definition}

\begin{remark}			\label{rem12}
{\rm
(i) If we have given a split exact sequence (\ref{splitexact}) then it splits completely as linear maps, that is, $j$ has a linear split
$t:M \rightarrow B$ with $t(x)=j^{-1}(x- fs(x))$,
and the split exactness relations of definition \ref{def121} are satisfied for such a split $\Delta_s:=t$.
Thus $\Delta_s$ 
may be viewed as a substitute for this linear split $j$, and it is often useful to think about 
$\Delta_s$ as $j$ in heuristical considerations.

(ii)
Set-theoretically $M= j(B) + s(A)$ in (\ref{splitexact}), and this is a direct linear sum by the last point.

(iii)
If (\ref{splitexact}) has the flaw that it is not exact in the middle but only $j(B) \subseteq {\rm ker}(f)$
then this can be repaired by restricting $M$ to the $G$-subalgebra $N:= j(B)+ s(A)$.

(iv)
$j,s,f$ in (\ref{splitexact}) all influence $\Delta_s$. This is clear for the linear split $t$, and so this must be even more true for the free generator $\Delta_s$.

(v) If we have given an additional homomorphism $u:A \rightarrow M$ in (\ref{splitexact}) then this is a second split for $f$ if and only if
$$u(a)-s(a) \in j(B) \quad \forall a \in A$$

(vi) Given (\ref{splitexact}), we have $s \Delta_s =0$
because
$s \Delta_s = s \Delta_s j \Delta_s =
s (1-fs) \Delta_s =0$.

(vii) If $f: (A,\alpha) \rightarrow M_n(A,\delta)$
is a corner embedding, then it is invertible in $GK$. In fact, $g :(M_n(A),\delta) \rightarrow (M_n(A) \otimes \calk, \delta \otimes {\rm triv})$
is an invertible corner embedding, as well as $fg$, so $f$ itself must be invertible.

}

\end{remark}

\section{The functor $\cala$}

\label{sec4}

Since equivariant $KK$-theory is stable, homotopy invariant and split exact, there is a functor from the 
univerally defined $GK$-theory to $KK$-theory.
It can be concretely constructed as follows, see \cite{thomsen} 
in the group equivariant case, 
or \cite[section 4]{buniversal} for
the inverse semigroups equivariant setting.

\begin{definition}		\label{def21}
{\rm
Define $\cala:GK \rightarrow KK^G$ to be the additive functor which is identical on objects
and as follows on generator morphisms:

(i)
For an equivariant $*$-homomorphism
$f: A \rightarrow B$ we put
$$\cala(f) = f_*([{\rm id},A,0]) \in KK^G(A,B)$$
 
(ii)
For a corner embedding
$e:(A,\alpha) \rightarrow (A \otimes \calk,\delta)$ in $C^*$ we set
$$\cala(e^{-1}) = 
[{\rm id}, \big( (A \otimes \calk) E, \delta\big), 0] 
\in KK^G(A \otimes \calk,A)$$
where $E:= e(A) \subseteq A \otimes \calk$ is the $G$-invariant corner $G$-algebra.

(iii)
For a split exact sequence (\ref{splitexact})
we define
the equivariant $*$-homomorphism
\begin{equation}		\label{chi}
\chi:M \rightarrow \call_B(B): \chi(x)(b) = j^{-1}(x j(b))
\end{equation}
and 
$$\cala(\Delta_s)=
[fs \chi \oplus \chi, (B \oplus B, \beta \oplus \beta), F] \in KK^G(X,A)$$
where $B \oplus B$ has the grading $- \oplus +$ and $F$ is the flip operator.

}
\end{definition}

\section{Double split exact sequences}

\label{sec5}

Throughout, $(A,\alpha)$ and $(B,\beta)$ are $G$-algebras.

\begin{definition}			\label{def31}
{\rm
A {\em double split exact sequence}
is a diagram of the form
$$\xymatrix{
0 \ar[r] &  
(B,\beta)  \ar[r]^j & (M,\gamma) \ar[rrr]^f 
\ar[dr]^{e_{11}}
&
& &  (A,\alpha)  \ar@<.5ex>[lll]^{s}
\ar[ld]^{t} 		\ar[r]
& 0\\
& 
&
& (M_2(M), \theta) 
& (M,\delta) \ar[l]^{e_{22}}		
}
$$
where all morphisms in the diagram are equivariant $*$-homomorphisms,
the first line is a split exact sequence in $C^*$, $t$ is another split in the sense that
$t f = 1_A$ in non-equivariant $C^*$, and $e_{ii}$ are the 
corner embeddings.


}
\end{definition}

\if 0
Notice that by the equivariant map $f \otimes 1$,
the map $f:(M,\delta) \rightarrow (A,\alpha)$
is also equivariant.
\fi

\begin{definition}			\label{def32}
{\rm
Consider a double split exact sequence as above.
We denote by $\mu_\theta$, or $\mu$ if $\theta$ is understood, the morphism
$$\mu: (M,\delta) \rightarrow (M,\gamma) :
\mu = e_{22} e_{11}^{-1}$$
in $GK$.
The {\em morphism in $GK$ associated to the 
double split exact sequence} is 
$$t \mu \Delta_{s}$$
}
\end{definition}

We use sloppy language and say for example ``the diagram is $t \mu \Delta_s$ in $GK$'', or two double split exact sequences are said to be ``equivalent" 
if their associated morphisms are.
Throughout, the short notation for the above double split exact sequence will be
$$\xymatrix{
(B,\beta)  \ar[r]^j & (M,\gamma) \ar[r]^f 
&  (A,\alpha)  \ar@<.5ex>[l]^{s,t}
} $$
Notating such a diagram, it is implicitely understood that this is a double split exact sequence as above if nothing else is said.
Often $s,t$ is stated as $s_\pm$, 
which has to be read as $s_-,s_+$.
The $G$-action $\theta$ of definition \ref{def31}
will sometimes be called the ``$M_2$-action of the double split exact sequence" 
for simplicity.

\if 0
\begin{definition}
{\rm

}
\end{definition}

We also call it the morphism in $GK$ of line..

Say two double split exact seuqences are isomorphic if thier associated elements are equal.
\fi

\begin{example}
{\rm
Assume $G$ is the trivial group.
%
Then $\mu=1$ is the identity in $GK$ because $e_{11}$ and $e_{22}$ are homotopic by a rotation in $C^*$.

Consequently, we have double split exact sequences in the more usual sense and
$t \mu \Delta_s = t \Delta_s$.
Moreover,
$t \Delta_s = (t-s)\Delta_s $.

}
\end{example}


\begin{lemma}			\label{lemma21}
{\rm
Consider two double split exact sequences
which are connected by three morphisms
$b,\Phi,a$ in $GK$ as in this diagram:
$$\xymatrix{
B  \ar@<.5ex>[r]^i
\ar[ddd]^b
 & M \ar[rrr]^f 
\ar[rd]^{e_{11}}
\ar@{..>}[ddd]^\phi
 &
& & A  \ar@<.5ex>[lll]^{s_-}
\ar[ld]^{s_+} 
\ar[ddd]^a
\\
&& M_2(M) 	\ar[d]^\Phi 
& M \ar[l]^{e_{22}}		
\ar@{..>}[d]^\psi   \\
&& M_2(N)  
& N \ar[l]^{f_{22}}		\\
D  \ar@<.5ex>[r]^j & N  \ar[rrr]^g 
\ar[ur]^{f_{11}}
 &
& & C  \ar@<.5ex>[lll]^{t_-}
\ar[ul]^{t_+}
} 
$$

Here we have defined
$$\phi:= e_{11} \Phi f_{11}^{-1}
\qquad
\psi:= e_{22} \Phi f_{22}^{-1}
$$

(i)
Then
for the commutativity of the left rectangle of the diagram we note
$$\Delta_{s_-} b = \phi \Delta_{t_-}
\qquad \Leftarrow   
\qquad
i \phi = b j \quad {\rm and}
\quad 
f s_- \phi = \phi g t_-$$

(ii)
For 
commutativity within the right big square of the diagram we observe
$$s_+ \mu \phi = a t_+ \mu 
\qquad \Leftrightarrow   
\qquad
s_+ \psi = a t_+
$$

(iii)
Consequently, commutativity of double split exact sequences in this diagram can be decided as 
$$s_+ \mu \Delta_{s_-} b
= a t_+ \mu \Delta_{t_-} 
\qquad \Leftarrow   
\qquad
\mbox{Conditions of (i) and (ii) hold true}
$$

}
\end{lemma}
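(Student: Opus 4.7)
The statement breaks cleanly into three short pieces, each of which I plan to settle by direct manipulation in $GK$, using only the split-exactness relations of Definition \ref{def121} and the invertibility of corner embeddings.

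For part (i), my plan is to multiply the split-exactness identity $1_M = \Delta_{s_-} i + f s_-$ for the top row by $\phi$ on the right, obtaining
\[
\phi \;=\; \Delta_{s_-}\,(i\phi) \;+\; (fs_-)\,\phi.
\]
Substituting the two hypotheses $i\phi = bj$ and $fs_-\phi = \phi g t_-$ converts this into $\phi = \Delta_{s_-}\, b\,j + \phi\, g\, t_-$. Right-composing by $\Delta_{t_-}$ and invoking $j\Delta_{t_-} = 1_D$ (the relation $1_B = j\Delta_s$ applied to the bottom row) together with $t_-\Delta_{t_-} = 0$ (Remark \ref{rem12}(vi)) kills the second term and leaves exactly $\phi\Delta_{t_-} = \Delta_{s_-} b$.

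For part (ii), the key preparatory identity I establish first is $\mu\phi = \psi\mu$ in $GK$, where the $\mu$ on the left-hand side is associated to the top double split exact sequence ($e_{22}e_{11}^{-1}$) and the $\mu$ on the right-hand side to the bottom ($f_{22}f_{11}^{-1}$). This is immediate by telescoping corner-embedding inverses:
\[
\mu\phi \;=\; (e_{22}e_{11}^{-1})(e_{11}\Phi f_{11}^{-1}) \;=\; e_{22}\Phi f_{11}^{-1} \;=\; (e_{22}\Phi f_{22}^{-1})(f_{22}f_{11}^{-1}) \;=\; \psi\mu.
\]
Plugging this into the left-hand side of the equation in (ii) gives $s_+\mu\phi = s_+\psi\mu$. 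Because the bottom $\mu = f_{22}f_{11}^{-1}$ is invertible in $GK$ (Remark \ref{rem12}(vii)), right-cancellation by $\mu$ yields the stated biconditional $s_+\mu\phi = at_+\mu \iff s_+\psi = at_+$.

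Part (iii) then falls out as a one-line combination of (i) and (ii): under both sets of hypotheses,
\[
s_+\mu\Delta_{s_-}b \;\stackrel{\textup{(i)}}{=}\; s_+\mu\phi\Delta_{t_-} \;\stackrel{\textup{(ii)}}{=}\; a\, t_+\mu\Delta_{t_-}.
\]
There is no genuinely hard step here; the content of the lemma is that the two natural commutativity conditions — the ``left rectangle'' governed by the ideals $B$ and $D$, and the ``right square'' governed by the splits and the $M_2$-actions — together force equality of the associated $GK$-morphisms. The only place that requires real attention is the bookkeeping: one must track which $G$-action ($\gamma$, $\delta$, or $\theta$) each arrow lives over so that every composition is well-typed, but once this is set up all of the manipulations above are purely formal.
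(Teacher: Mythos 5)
Your proposal is correct and follows essentially the same route as the paper: your computation for (i) is exactly the paper's insertion of $1_M=\Delta_{s_-}i+fs_-$ followed by $j\Delta_{t_-}=1_D$ and $t_-\Delta_{t_-}=0$, and your telescoping identity $\mu\phi=(e_{22}e_{11}^{-1})(e_{11}\Phi f_{11}^{-1})=\psi\mu$ with right-cancellation of the invertible $\mu$ is precisely what the paper compresses into ``clear by commutativity of involved rectangles in the diagram and invertibility of all corner embeddings'' for (ii), with (iii) following identically. If anything, your write-up of (ii) and (iii) is more explicit than the paper's.
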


\begin{proof}
(i) 
We compute
$$\phi \Delta_{t_-} = (\Delta_{s_-} i + f s_-) \phi \Delta_{t_-}
= \Delta_{s_-} b j \Delta_{t_-} + \phi g t_- \Delta_{t_-} 
= \Delta_{s_-} b$$

\if 0
$\Rightarrow$:
We consider
$$bj = i \Delta_{s_-} b j = i \phi \Delta_{t_-} j
= i \phi(1-g t_-) = i \phi - i f s_- \phi 
= i \phi$$
which shows the first identity.
Consequently
$$\Delta_{s_-} i \phi g t_- =
\Delta_{s_-} b j g t_- =0.$$
This yields
$$\phi g t_- =(f s_- + \Delta_{s_-} i) \phi g t_-
=
f s_- \phi g t_- = f s_- \phi (1- \Delta_{t_-} j)
=$$
$$ f s_- \phi - f s_- \Delta_{s_-} b j 
= f s_- \phi$$
\fi

(ii) This is clear by commutativity of involved rectangles in the diagram and invertibility of all corner embeddings. (iii) Also clear.
\end{proof}

We will exclusively 
encounter this situation:

\begin{remark} 
\label{cor22}
{\rm
Assume that $\Phi= \phi \otimes 1_{M_2}$
is equivariant
for a non-equivariant 
$*$-homomorphism $\phi:M \rightarrow M$.

Then this $\phi$ is the $\phi$ and $\psi$ in the above diagram as non-equivariant maps, and both
are automatically equivariant as maps as 
entered in the diagram.
%
}
\end{remark}


\section{The $M \square A$-construction}
%
%

\label{sec6}

We shall use the following standard procedure
to produce split exact sequences,
and this is in fact 
key:



\begin{definition}		\label{lemma35}
{\rm
Let $i: (B,\beta) \rightarrow (M,\gamma)$ be an equivariant injective $*$-homomorphism such that
the image of $i$ is an ideal in $M$.
Let $s: (A,\alpha) \rightarrow (M,\gamma)$ be an equivariant
$*$-homomorphism.
Then we 
define the equivariant 
$G$-subalgebra
$$M \square A := 
\{ (s(a)+i(b),  a) \in M \oplus A|\,a \in A, b \in B\} $$ 
of $(M \oplus A, \gamma \oplus \alpha)$.
The $G$-action on $M \square A$ is denoted by
$\gamma \square \alpha$.
In particular we have a split exact sequence
$$\xymatrix{0 \ar[r] & B  \ar[r]^j & M \square A \ar[r]^f & A \ar[r] \ar@<.5ex>[l]^{s \square 1} & 0},$$
where $j(b)= (i(b),0)$, $f(m,a)=a$ and $(s \square 1)(a) :=(s(a),a)$ for all $a \in A, b \in B, m \in M$.

}
\end{definition}

If we have given a double split exact sequence 
as in definition \ref{def31} with $M$ of the form $M \square A$
then 
it is understood that $j,f$ and $s \square 1$ 
are always of the form as in the last definition.
Moreover, the construction of $M \square A$ refers always to the first notated split $s$, or the split indexed by minus (e.g. $s_- \square 1$)
if it appears in a double split exact sequence.
We also write $s \square$ instead of $s \square 1$ in diagrams.
We denote elements of $M \square A$ by $m \square a :=(m,a)$. The operator $\square$ binds weakly, that is for example, $m+n \square a =(m+n)\square a$.

Non-equivariantly we have 
\begin{equation}				\label{isq}
M_2(M \square A) \cong
M_2(M) \square M_2(A) \subseteq M_2(M) \oplus M_2(A)
\end{equation}
with respect to
$i \otimes 1$ and $s \otimes 1$.

If we have $G$-algebras $(M_2(M),\gamma)$ and $(M_2(A),\delta)$ 
and $(M_2(M \square A), \theta)$
is canonically a $G$-invariant $G$-subalgebra of 
$(M_2(M) \oplus M_2(A),\gamma \oplus \delta)$ 
then we call $\theta$ also $\gamma \square \delta$.

\begin{lemma}
Consider definition \ref{lemma35}.
If we have $G$-algebras $(M_2(M),\gamma)$ and $(M_2(A),\delta)$ and
$M_2(i(B))$ is 
$G$-invariant under $\gamma$
then
$M_2(M \square A)$ is $G$-invariant under the $G$-action 
$\gamma \oplus \delta$ 
if and only if
$$\gamma_g((s\otimes 1)(a))- (s \otimes 1)(\delta_g(a)) \in M_2(i(B))
\quad \forall a \in M_2(A)$$ 
\end{lemma}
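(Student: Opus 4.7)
My plan is to prove the equivalence by directly unwinding the definition of $M_2(M \square A)$ as a subalgebra and checking membership element-wise. By (\ref{isq}) a generic element of $M_2(M \square A)$ has the form
\[
\big( (s \otimes 1)(a) + (i \otimes 1)(b),\; a \big) \in M_2(M) \oplus M_2(A)
\]
with $a \in M_2(A)$ and $b \in M_2(B)$. Applying $\gamma_g \oplus \delta_g$ produces
\[
\big( \gamma_g((s \otimes 1)(a)) + \gamma_g((i \otimes 1)(b)),\; \delta_g(a) \big).
\]
Membership of this element back in $M_2(M \square A)$ forces the second component to take the form $a' := \delta_g(a)$ (no choice here), so the question reduces to whether the first component equals $(s \otimes 1)(\delta_g(a)) + (i \otimes 1)(b')$ for some $b' \in M_2(B)$.

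For the $(\Leftarrow)$ direction, I would use the hypothesis that $M_2(i(B))$ is $\gamma$-invariant to rewrite $\gamma_g((i \otimes 1)(b)) = (i \otimes 1)(\tilde b)$ for a unique $\tilde b \in M_2(B)$ (using injectivity of $i \otimes 1$). The stated cocycle-like condition then gives $\gamma_g((s \otimes 1)(a)) - (s \otimes 1)(\delta_g(a)) = (i \otimes 1)(b'')$ for some $b'' \in M_2(B)$, and adding these two yields the desired form with $b' = b'' + \tilde b$.

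For $(\Rightarrow)$, one specializes to $b = 0$: then $G$-invariance of $M_2(M \square A)$ forces $\gamma_g((s \otimes 1)(a))$ to equal $(s \otimes 1)(\delta_g(a)) + (i \otimes 1)(b')$ for some $b' \in M_2(B)$, which is exactly the required membership $\gamma_g((s \otimes 1)(a)) - (s \otimes 1)(\delta_g(a)) \in M_2(i(B))$.

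I do not foresee a real obstacle; the argument is pure bookkeeping once one observes that the two summands $(s \otimes 1)(M_2(A))$ and $(i \otimes 1)(M_2(B))$ in the defining decomposition of $M_2(M \square A)$ interact cleanly: the $i(B)$-part is already handled by the standing hypothesis, so the content of invariance is concentrated entirely in how $\gamma$ moves the image of $s \otimes 1$, which is precisely measured by the displayed difference modulo $M_2(i(B))$.
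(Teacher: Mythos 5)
Your proof is correct and follows exactly the route the paper indicates: its proof consists only of the remark that one applies the isomorphism (\ref{isq}) to work with $i\otimes 1$ and $s\otimes 1$, after which the verification is ``straightforward'' (cf.\ the similar computation in lemma \ref{lemma272}). Your element-wise check of both directions, using $\gamma$-invariance of $M_2(i(B))$ for ($\Leftarrow$) and the choice $b=0$ for ($\Rightarrow$), is precisely the bookkeeping the paper omits.
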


\begin{proof}
We apply the isomorphism 
(\ref{isq}) and may work with $i \otimes 1$ and $s \otimes 1$ as in definition
\ref{lemma35}. The proof is then straightworward, or confer the similar proof of
lemma \ref{lemma272}.
\end{proof}

One may observe that 
the non-equivariant splits of the 
exact sequence of 
definition \ref{lemma35}
are exactly the maps of the from $t \square 1$.
\if 0

\if 0

This is the only reason why we have added the arrow $f \otimes 1$ to the diagram in definition \ref{def31}.

\begin{lemma}
If $M$ of definition definition \ref{def31}
is of the form $M \square A$ then
the $G$-action on $M_2(M \square A)$ is of the form $\delta \square (\alpha \otimes 1)$.
\end{lemma}

\begin{proof}

\end{proof}

We want to look at how second splits 
look like in this diagram, thereby
igonoring the $G$-action for the moment:

\begin{lemma}
Let $G$ be trivial.

The splits $t:(A,\alpha) \rightarrow (M \square A,\delta)$ are exactly
those maps $t$
where 
$\delta$ is of the form $\gamma \square \alpha$
and $t$ is of the form
$t = u \square 1$ for an equivariant $*$-homomorphism
$u: (A,\alpha) \rightarrow (M,\gamma)$ with the property that
$$u(a) - s(a) \in i(B)  \quad \forall a \in A.$$


\end{lemma}

\fi

\if 0
\begin{lemma}
Consider the split exact sequence 
as in the first line of diagram..

Then $M= j(B) + s(A)$.

An equivariant map $t$ as in the diagram 
is a split if and only
$$t(a) -s(a) \in j(B)$$

\end{lemma} 
\fi

\if 0

We can now define a split exact sequence
which we describe next involving also a second split:

\begin{definition}
{\rm

}
\end{definition}

\fi

\fi
We may bring any 
double split exact sequence to this form:

\begin{lemma}		\label{lemma41}
{\rm (i)}
Any double split 
exact sequence as in 
the 
first line of 
this diagram
can be completed to this diagram
$$\xymatrix{
B  \ar[r]^i \ar[d]
& M  \ar[r]^f 
\ar[d]^\phi
& A 
\ar@<.5ex>[l]^{s_\pm} 	
\ar[d]
\\
B  \ar[r]^j  & M \square A \ar[r]^g  & A 
\ar@<.5ex>[l]^{t_\pm}  
}
$$
such that
the first line ist the second line in GK,
that is, $s_+ \mu \Delta_{s_-}
= t_+ \mu \Delta_{t_-}$.

\if 0
Thereby,
if the $G$-action on $M_2(M)$ is $\theta$, and $\alpha$ on $A$,
then the
$G$-action on $M_2(M \square A)$
is $\theta \square (\alpha \otimes 1)$.
\fi 

{\rm (ii)}
If the $G$-action on $M_2(M)$ is $\theta$,
then the
$G$-action on $M_2(M \square A)$
is 
$\theta \square \delta$
if and only if the $G$-action $\theta \square \delta$ exists
if and only if $f \otimes 1 : 
(M_2(M),\theta) \rightarrow (M_2(A),\delta)$
is equivariant.

{\rm (iii)}
The
$G$-action on $M_2(M \square A)$
is of the form 
$\theta \square \delta$.

\if 0
, referring to 
$$\xymatrix{0 \ar[r] & M_2(B)  \ar[rr]^j && M_2(M) \square M_2(A)  \ar[rr]^{f \otimes 1} && A \ar[r] \ar@<.5ex>[ll]^{(s \otimes 1) \square 1} & 0},$$
\fi
\end{lemma}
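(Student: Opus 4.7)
The plan is to produce an explicit equivariant map $\phi:M\to M\square A$ making the right-hand column of the diagram in (i) commute, apply Lemma \ref{lemma21} to obtain equality in $GK$, and then derive (ii) from the criterion already stated in the lemma preceding this one, and (iii) from the fact that $\phi$ is bijective plus a standard ideal-generation argument.

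For (i), define $\phi(m):=(m,f(m))$. Since $m-s_-(f(m))\in\ker f=i(B)$, this really lands in $M\square A$, and one checks directly that $\phi$ is a $*$-homomorphism, equivariant from $(M,\gamma)$ to $(M\square A,\gamma\square\alpha)$ (using equivariance of $f$), and bijective with inverse the first projection. Set $t_\pm:=s_\pm\square 1$. Now invoke Lemma \ref{lemma21} with $b=1_B$, $a=1_A$ and $\Phi:=\phi\otimes 1_{M_2}$; by Remark \ref{cor22} the maps $\phi$ and $\psi$ generated by the lemma both coincide with our $\phi$. The three sufficient conditions then reduce to elementary identities: $i\phi=j$ because $\phi(i(b))=(i(b),0)=j(b)$; $f s_-\phi=\phi g t_-$ because both send $m\mapsto(s_-(f(m)),f(m))$; and $s_+\phi=t_+$ because both send $a\mapsto(s_+(a),a)$. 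Lemma \ref{lemma21}(iii) then delivers $s_+\mu\Delta_{s_-}=t_+\mu\Delta_{t_-}$.

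For (ii), the lemma preceding the statement says that $M_2(M\square A)$ is $G$-invariant in $M_2(M)\oplus M_2(A)$ under $\theta\oplus\delta$ iff
$$\theta_g\bigl((s_-\otimes 1)(a)\bigr)-(s_-\otimes 1)\bigl(\delta_g(a)\bigr)\in M_2(i(B))\qquad\forall\,a\in M_2(A).$$
Since $\ker(f\otimes 1)=M_2(i(B))$ and $f\otimes 1$ is surjective, applying $f\otimes 1$ shows this is equivalent to $(f\otimes 1)(\theta_g((s_-\otimes 1)(a)))=\delta_g(a)$ for all $a$; decomposing any $m\in M_2(M)$ as $(s_-\otimes 1)(f\otimes 1)(m)+(\text{something in }M_2(i(B)))$ upgrades this to full equivariance of $f\otimes 1$. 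The clauses ``the $G$-action is $\theta\square\delta$'' and ``$\theta\square\delta$ exists'' are synonymous by the very definition of the $\square$-action.

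For (iii), the $M_2$-action on $M_2(M\square A)$ required by the new double split exact sequence is $\theta':=\Phi_*\theta$, where $\Phi=\phi\otimes 1_{M_2}$ is the bijection from (i). Under the inclusion $M_2(M\square A)\subseteq M_2(M)\oplus M_2(A)$ we have $\Phi(m)=(m,(f\otimes 1)(m))$, so define $\delta$ on $M_2(A)$ by $\delta_g((f\otimes 1)(m)):=(f\otimes 1)(\theta_g(m))$. The one point that needs checking is well-definedness: we need $\theta$ to preserve $\ker(f\otimes 1)=M_2(i(B))$, and this holds because $M_2(i(B))$ is the two-sided ideal of $M_2(M)$ generated by $e_{11}(i(B))$, a $\theta$-invariant subset (as $e_{11}$ is equivariant and $i(B)$ is $\gamma$-invariant), hence itself $\theta$-invariant. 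Surjectivity of $f\otimes 1$ then makes $\delta$ a genuine $G$-action, and by construction $f\otimes 1$ is equivariant, so by (ii) the action $\theta\square\delta$ exists and agrees with $\theta'$ on $M_2(M\square A)$. The main obstacle across the three parts is really only this last well-definedness check; everything else is a direct diagram chase.
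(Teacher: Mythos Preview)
Your proof is correct and follows essentially the same approach as the paper: the same $\phi(m)=m\square f(m)$, the same application of Lemma \ref{lemma21} with $\Phi=\phi\otimes 1_{M_2}$, and the same reduction of (iii) to constructing a quotient action $\delta$ on $M_2(A)$ making $f\otimes 1$ equivariant. The only difference is that for the $\theta$-invariance of $M_2(i(B))$ in (iii), the paper cites Corollary \ref{cor79} (whose proof uses the explicit matrix form of $\theta$ from Lemma \ref{lemma61}), whereas you give a direct ideal-generation argument; both are valid and roughly equal in length.
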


\begin{proof}
(i)
Define the second line of the diagram as in definition \ref{lemma35}, that is,
put $g(m \square a)= a$,
$j(b) = i(b) \square 0$ and $t_\pm(a)= s_\pm(a) \square a$.
Define 
$$\phi(m) = m \square f(m)$$

We are going to apply lemma \ref{lemma21}
for $B=D$, $A=C$, $a=1$, $b=1$,
and
$\Phi= \phi \otimes 1_{M_2}$.
Note that $\phi$ is bijective. We define the $G$-action on $M_2(M \square A)$ in such a way that $\Phi$ becomes equivariant.
%
%
By 
remark \ref{cor22}, $\phi=\psi$ in the diagram of lemma \ref{lemma21} and both maps are $G$-equivariant.
We have 
\begin{eqnarray*}
&&i \phi(b) = i(b) \square 0 = j(b)  \\
&&f s_- \phi(m) = f s_-(m) \square f(m) = \phi g (s_- \square 1)(m),
\end{eqnarray*}
which is the condition of
lemma \ref{lemma21}.(i).
Further 
$s_+ \phi(a) = s_+(a) \square a = 
t_+(a)$ 
yields the condition of lemma \ref{lemma21}.(ii).
Hence the claim follows 
from lemma \ref{lemma21}.(iii).
\if 0
Note that $M = i(B) + s_-(A)$. 
We define $\phi(i(b)+ s_-(a))= (b + s_-(a)) \square a$
or equivalently $\phi(m) = m \square f(m)$.
\fi

(ii)
We assume that $(M_2(A),\delta)$ exists and want to see when $\theta \square \delta$ is valid:

Note that 
$m 
\oplus a \in M \square A$ if and only if  $f(m) = a$.
Hence
$\theta_g(m) 
\oplus \delta_g(a) \in M_2(M \square A)$ if and only if $(f \otimes 1)(\theta_g(m))=\delta_g(a)$.
Set $a=(f\otimes 1)(m)$.

(iii)
This follows from (ii) and corollary \ref{cor79},
which is independent from this lemma.
\end{proof}

\if 0
We remark that the not labelled arrows in diagrams as above will always mean
canoncal identity maps, embeddings, projections ad so forth.
Note how we used and shall use the suggestive notation $m\square a:= (m,a) \in M \square A$ in 
in the last proof.
\fi

\if 0
\begin{lemma}
If diagram
$$\xymatrix{B \ar[r] & D \ar[r] \ar[l] & A}$$
not necessarily exact but $B$ ideal in $D$,
double split with $s_+-s_- \in B$,
and $f$ surjectiv auf $A$, $B \subseteq ker(f)$, 
then there is a subalgebra $E \subseteq D$
such that
$$\xymatrix{B \ar[r] & E \ar[r] \ar[l] & A}$$
split exact.
\end{lemma}

\begin{proof}
Nimm Summe $E:=B + s_-(A) \subseteq D$.
ist algebra, da $D$ multiplier of $B$.

\end{proof}
\fi

\section{Actions on $M_2(A)$}

\label{sec7}

In this section we want 
to inspect closer how
a $M_2$-action of a double split exact sequences looks like.
%
%
This is a key lemma:

\begin{lemma}			\label{lemma51}
Let $S,T$ be two $G$-actions on a Hilbert $(B,\beta)$-module $\cale$.
Then
$$
\alpha_g
\left (\begin{matrix} x & y\\
 z & w
\end{matrix}
\right ) =
\left (\begin{matrix} S_g x S_{g^{-1}} & S_g y T_{g^{-1}}\\
 T_g z S_{g^{-1}} & T_g w T_{g^{-1}}
\end{matrix} \right ) 
$$
defines a $G$-action $\alpha$ on $M_2\big ( \call_B(\cale) \big)$

This is actually the inner 
action ${\rm Ad} (S \oplus T)$ 
on $\call_B \big ((\cale,S) \oplus (\cale,T) \big )$.

\end{lemma}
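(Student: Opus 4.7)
My plan is to reduce both assertions to the single identification
$$M_2\bigl(\call_B(\cale)\bigr) \;\cong\; \call_B(\cale \oplus \cale),$$
under which a matrix $\bigl(\begin{smallmatrix} x & y \\ z & w \end{smallmatrix}\bigr)$ corresponds to the block operator on $\cale \oplus \cale$. This is the standard isomorphism of $C^*$-algebras, so once one shows that the given $\alpha$ corresponds under it to the inner action ${\rm Ad}(S \oplus T)$, both claims become immediate: $\alpha$ is a $G$-action because conjugation by a $G$-action on a Hilbert module is automatically a $G$-action on the adjointable operators, and the second sentence is then literally the content of the identification.

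First I would observe that $S \oplus T$ is a well-defined $G$-action on the Hilbert $(B,\beta)$-module $\cale \oplus \cale$, since the direct sum of two $G$-actions on Hilbert modules with common coefficient action $\beta$ is again a $G$-action on the direct sum module. Consequently ${\rm Ad}(S \oplus T)$ makes sense on $\call_B(\cale \oplus \cale)$ and is a $G$-action there, either by the general fact recalled in Section~\ref{sec2} that $\overline{\alpha} = {\rm Ad}(\alpha)$ gives an action on the multiplier/adjointable side, or directly from the computation $(S \oplus T)_g \circ (S \oplus T)_h = (S \oplus T)_{gh}$ (on the appropriate domain, in the inverse semigroup case).

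Next I would carry out the direct block computation: viewing $(S \oplus T)_g$ as the diagonal block $\bigl(\begin{smallmatrix} S_g & 0 \\ 0 & T_g \end{smallmatrix}\bigr)$ and similarly for $g^{-1}$, the matrix product
$$\begin{pmatrix} S_g & 0 \\ 0 & T_g \end{pmatrix}
\begin{pmatrix} x & y \\ z & w \end{pmatrix}
\begin{pmatrix} S_{g^{-1}} & 0 \\ 0 & T_{g^{-1}} \end{pmatrix}
\;=\;
\begin{pmatrix} S_g x S_{g^{-1}} & S_g y T_{g^{-1}} \\ T_g z S_{g^{-1}} & T_g w T_{g^{-1}} \end{pmatrix}$$
recovers exactly $\alpha_g$. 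This identifies $\alpha_g$ with ${\rm Ad}(S \oplus T)_g$ under the block isomorphism, and therefore proves both that $\alpha$ is a $G$-action and that it coincides with the asserted inner action.

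I do not expect any serious obstacle; the only point that requires a brief remark (especially in the inverse semigroup setting, which is the running assumption after corollary~\ref{corollary22}) is that the conjugation formula ${\rm Ad}(S \oplus T)$ is genuinely an inverse semigroup action on $\call_B(\cale \oplus \cale)$, i.e.\ satisfies the appropriate cocycle/idempotent compatibility. This is handled exactly as in definition~\ref{def021} and the analogous discussion for Hilbert modules, so it requires only a sentence.
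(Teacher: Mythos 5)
Your proof is correct and takes exactly the route the paper intends: the paper gives no separate proof of this lemma, because its second sentence already records the identification $M_2\bigl(\call_B(\cale)\bigr) \cong \call_B\bigl((\cale,S) \oplus (\cale,T)\bigr)$ under which $\alpha = {\rm Ad}(S \oplus T)$, and ${\rm Ad}$ of a Hilbert-module $G$-action was already declared a $G$-action in the preliminaries of Section~\ref{sec2}. Your block computation, together with the remark that $S \oplus T$ is a $G$-action on $\cale \oplus \cale$ and the brief check of the inverse semigroup compatibilities, fills in precisely what the paper leaves implicit.
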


\begin{definition}
{\rm
The
$\alpha$ of the last lemma is also denoted by
${\rm Ad}(S \oplus T)$ or
${\rm Ad}(S,T)$.
}
\end{definition}


\begin{lemma}		\label{lemma61}
Let $(A,\alpha)$ and $(A,\delta)$ be $G$-$C^*$-algebras.

Let $(M_2(A),\theta)$ be a $G$-algebra 
and the corner embeddings
$e_{11} :(A,\alpha) \rightarrow
(M_2(A),\theta)$ and
$e_{22} :(A,\delta) \rightarrow
(M_2(A),\theta)$ be equivariant.



Then
$\theta$ is of the form

$$
\theta_g
\left (\begin{matrix} x & y\\
 z & w
\end{matrix}
\right ) =
\left (\begin{matrix} \alpha_g(x) & \beta_g(y)\\
 \gamma_g(z) & \delta_g(w)
\end{matrix} \right ) 
$$

Also:



{\rm (i)}
One has the relations
\begin{eqnarray}
&&\gamma_g(ax)= \delta_g(a)\gamma_g(x)	
\qquad \gamma_g(xb)= \gamma_g(x) \alpha_g(b)	
	\label{l2}  \\
&& \beta_g(ax)= \alpha_g(a) \beta_g(x)  
\qquad
\beta_g(xb)= \beta_g(x) \delta_g(b)	\\
&&\alpha_g(xy)= \beta_g(x) \gamma_g(y)	
\qquad
\delta_g(xy)= \gamma_g(x) \beta_g(y)	
	\label{l4}  \\
&&\beta_g(y) = {\gamma_g(y^*)}^*	
\qquad \gamma_{gh}= \gamma_g \gamma_h
\qquad \beta_{gh}= \beta_g \beta_h 
	\label{l5}  
\end{eqnarray}

{\rm (ii)}
$(A,\gamma)$ is an imprimitivity Hilbert $((A,\delta),(A,\alpha))$-bimodule,
where the bimodule structure is 
multiplication in $A$,
and the
right inner product is $\langle a,b \rangle=a^*b$ and the left one
is $\langle a,b \rangle =a b^*$.

{\rm (iii)}
Analogously,
$(A,\beta)$ is an imprimitivity Hilbert $((A,\alpha),(A,\delta))$-bimodule.

{\rm (iv)}
Let
$\chi : A \rightarrow 
\call_A(A)$ be the natural embedding.

Then $\alpha$ and $\gamma$ are $G$-actions
on the Hilbert $(A,\alpha)$-module $A$.

Consequently we have the $G$-action
${\rm Ad}(\alpha \oplus \gamma)$
on the matrix algebra $M_2(\call_{(A,\alpha)}(A))$.

The map
$$\chi \otimes 1_{M_2}:(M_2(A), \theta) 
\rightarrow 
(M_2(\call_{(A,\alpha)}(A)), {\rm Ad}(\alpha \oplus \gamma))$$
is a $G$-equivariant injective $*$-homomorphism.

\if 0
Recall also that
$$(M_2(\call_{(A,\alpha)}(A)), {\rm Ad}(\alpha,\gamma))
\cong
(\call_{(A,\alpha)}(A \oplus A), {\rm Ad}(\alpha
\oplus \gamma))
$$
\fi

{\rm (v)}
$\gamma$ determines $\theta$ uniquely and completely.


\if 0
(vii)
$(A,\gamma)$ is even an imprimitivity Hilbert $((A,\delta),(A,\alpha))$-bimodule, where the
right inner product is $<a,b>=a^*b$ and the left one
is $<a,b>=a b^*$.
\fi 

{\rm (vi)}
$\theta$ determines $\alpha, \beta, \gamma$ and $\delta$ uniquely.

{\rm (vii)}
In general, $\alpha$ and $\delta$ do not determine $\gamma$ and thus not $\theta$.

{\rm (viii)}
If we drop all assumptions then we may add:

A $G$-algebra $(A,\alpha)$ and a Hilbert $(A,\alpha)$-module action $\gamma$ on $A$ 
alone ensure the existence of the above $\theta$
with all assumptions and assertions of this lemma.
%


\end{lemma}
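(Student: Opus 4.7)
The plan is to extract the block structure of $\theta$ from equivariance of the corner embeddings, derive the algebraic relations in (i) as consequences of $\theta_g$ being a $*$-automorphism, and then read off (ii)--(viii) from those relations.

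To obtain the block form, fix $a \in A$ and observe that $e_{12}(a) e_{12}(a)^* = e_{11}(aa^*) \in e_{11}(A)$ and $e_{12}(a)^* e_{12}(a) = e_{22}(a^*a) \in e_{22}(A)$, both $G$-invariant corners. Writing $\theta_g(e_{12}(a))$ as a generic $2 \times 2$-matrix and comparing its two support products with the equivariant images of the right-hand sides forces the $(1,1)$-, $(2,1)$-, and $(2,2)$-entries to vanish; this defines $\beta_g(a) \in A$ by the surviving $(1,2)$-entry, and symmetrically $\gamma_g(a)$ by examining $e_{21}(a)$. Equivariance of $e_{11}, e_{22}$ pins down the diagonal entries to $\alpha, \delta$, producing the claimed block form. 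All relations in (i) then drop out by feeding the handful of products of matrix-unit-type elements through $\theta_g$: for instance $e_{11}(a) \cdot e_{12}(x) = e_{12}(ax)$ yields $\beta_g(ax) = \alpha_g(a)\beta_g(x)$; $e_{12}(x) \cdot e_{21}(y) = e_{11}(xy)$ yields $\alpha_g(xy) = \beta_g(x)\gamma_g(y)$; the identity $e_{12}(y)^* = e_{21}(y^*)$ yields $\beta_g(y) = \gamma_g(y^*)^*$; and $\theta_{gh} = \theta_g\theta_h$ applied to off-diagonal entries yields the cocycle relations for $\beta$ and $\gamma$.

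With (i) in hand, (ii) and (iii) are immediate: the identities \eqref{l2}--\eqref{l5} encode exactly that multiplication in $A$ turns $(A,\gamma)$ into a $G$-equivariant Hilbert bimodule with the claimed inner products. For example, compatibility of $\gamma$ with the right inner product $\langle a,b\rangle = a^*b$ reduces to $\alpha_g(a^*b) = \gamma_g(a)^*\gamma_g(b)$, which follows by combining \eqref{l4} with \eqref{l5}; fullness is automatic since $A\cdot A$ is dense in $A$. For (iv), equivariance of $\chi \otimes 1_{M_2}$ is checked entry by entry; on the $(1,2)$-position one applies $\mathrm{Ad}(\alpha \oplus \gamma)$ to $\chi(x)$ and evaluates on $b \in A$, obtaining $\alpha_g(x\gamma_{g^{-1}}(b)) = \beta_g(x)\, b = \chi(\beta_g(x))(b)$ by \eqref{l4}, matching $\theta_g(e_{12}(x))$; the remaining three entries are analogous.

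Parts (v)--(viii) are short corollaries. For (v), $\gamma$ determines $\beta$ through \eqref{l5}, and \eqref{l4} combined with the density of $A \cdot A$ in $A$ then determines $\alpha$ and $\delta$; (vi) is immediate from the block form. For (vii), one exhibits a concrete $(A,\alpha)$ carrying two inequivalent Hilbert module actions with the same induced $\delta$, for example by an inner perturbation with a non-trivial unitary cocycle. For (viii), given only $(A,\alpha)$ and a Hilbert $(A,\alpha)$-module action $\gamma$ on $A$, define $\beta$ and $\delta$ from $\gamma$ as dictated by \eqref{l5} and \eqref{l4}, and take $\theta$ to be the corresponding block action --- equivalently, pull back $\mathrm{Ad}(\alpha \oplus \gamma)$ along $\chi \otimes 1_{M_2}$ --- whereupon the Hilbert module axioms for $\gamma$ translate directly into all relations of (i) and hence into the conclusions of (ii)--(iv). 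The main technical delicacy throughout is that $A$ need not be unital, so one cannot appeal to the projections $e_{ii}(1_A)$ to decompose matrices in $M_2(A)$; the support-projection argument in the very first step is the key device that bypasses this cleanly.
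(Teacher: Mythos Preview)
Your proof is correct and follows essentially the same route as the paper: derive the block form, read off the relations in (i) from matrix-unit products, and deduce (ii)--(viii) from those. The one noteworthy difference is in the opening step: to force $\theta_g(e_{12}(a))$ into the $(1,2)$-slot you use the support products $e_{12}(a)e_{12}(a)^*\in e_{11}(A)$ and $e_{12}(a)^*e_{12}(a)\in e_{22}(A)$ together with positivity, whereas the paper multiplies $e_{12}(a)$ on both sides by $e_{11}(a_i)$ and $e_{22}(a_i)$ for an approximate unit $(a_i)$ and lets $i\to\infty$. Both devices circumvent the lack of a unit in $A$ equally well; your positivity argument is arguably cleaner since it avoids the limit.

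Two minor remarks. You write ``$\theta_g$ being a $*$-automorphism'', but in the inverse semigroup setting $\theta_g$ is only a $*$-endomorphism; your argument only uses the $*$-homomorphism property, so nothing breaks. For (vii) the paper gives the explicit example $G=\Z/2$, $A=\C$, $\alpha=\delta$ trivial, with $\gamma_g(x)=x$ versus $\gamma_g(x)=(-1)^g x$; your ``inner perturbation by a nontrivial unitary cocycle'' is the same idea but it would be worth writing down one concrete instance.
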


\begin{proof}
If $a \in A$, $(a_i)\subseteq A$ an approximate unit of $A$, and we apply $\theta_g$ to 
$$
\left (\begin{matrix} a_i & 0\\
 0 & 0
\end{matrix}
\right )
\left (\begin{matrix} 0 & a\\
 0 & 0
\end{matrix} \right ) 
\left (\begin{matrix} 0 & 0\\
 0 & a_i
\end{matrix}
\right )
$$
then we see that the $\theta_g$ applied to the middle matrix has again the form of the middle matrix.

(i) One computes 
expressions like
$$
\left (\begin{matrix} 0 & 0\\
 \gamma_g(z) & 0
\end{matrix}
\right )
\left (\begin{matrix} \alpha_g(x) & 0\\
 0 & 0
\end{matrix} \right ) 
\quad
\left (\begin{matrix} 0 & 0\\
 0 & \delta_g(x)
\end{matrix}
\right )
\left (\begin{matrix} 0 & 0\\
 \gamma_g(z) & 0
\end{matrix} \right ) 
\quad 
\theta \left (\begin{matrix} 0 & 0\\
 z & 0
\end{matrix} \right )^* 
$$
and uses the fact that $\theta$ is a $G$-action on a $C^*$-algebra.

(ii) Put the first relation of (\ref{l5})
into line (\ref{l4}) 
and then use lines
(\ref{l2}) and (\ref{l4}).

(iv) By (ii), $\alpha$ and $\gamma$ are $G$-actions as claimed, so that the existence of ${\rm Ad}(\alpha \oplus \gamma)$ is by lemma \ref{lemma51}.
By relations (i) one can deduce
\begin{equation}			\label{l7}
\left (\begin{matrix} \alpha_g(x) x' & \beta_g(y)y' \\
 \gamma_g(z)z' & \delta_g(w)w'
\end{matrix} \right ) 
= 
\left (\begin{matrix} \alpha_g(x \alpha_{g^{-1}} 
(x'))  & \alpha_g(y \gamma_{g^{-1}}(y')) \\
 \gamma_g(z \alpha_{g^{-1}}(z')) & \gamma_g(w
\gamma_{g^{-1}}(w'))
\end{matrix} \right ) 
\end{equation}
for all $x,...,w' \in A$, which shows $G$-equivarinace of $\chi \otimes 1$.
In fact, the second matrix line follows directly from (\ref{l2}), and the upper right corner from the first relation of (\ref{l4}).

(v) $\alpha,\delta$ and $\beta$ are determined by $\gamma$ by (\ref{l4}) and the first relation of (\ref{l5}).
 
(viii) By (iv), 
we can 
construct 
${\rm Ad}(\alpha \oplus \gamma)$ and aim to 
define $\theta$ by its restriction. 
To show that the image of $\chi \otimes 1$ is $G$-invariant, we consider the right hand side of (\ref{l7})
and want to construct identity with the left hand side.
For the first column this is clear. 
Setting $\beta$ as in the first identity of (\ref{l5}) we get the upper right corner.
The lower right corner follows from
$$\gamma_g(a a^* \gamma_{g^{-1}} (x)) = \gamma_g(a) \alpha_g(a^* \gamma_{g^{-1}}(x)) = \gamma_g(a) \gamma_g(a)^* x$$

(vii)
Take for example $G=\Z/2$, $A=\C$ (or any $A$), $\alpha=\delta$ the trivial action. Then $\gamma_g(x) = x$ and $\gamma_g(x) = (-1)^g x$ are two valid choices.
%
\end{proof}

\begin{corollary}			\label{cor74}
Let $(M_2(A_i),\theta_i)$ be two $G$-algebras as in lemma \ref{lemma61} ($i=1,2$). Let $\phi:A_1 \rightarrow A_2$ be a non-equivariant $*$-homomorphism.
Then $\phi \otimes 1_{M_2}$ is $G$-equivariant
if and only if it is $G$-equivariant on the lower left corner space 
 $(A_1,\gamma_1)$.
\end{corollary}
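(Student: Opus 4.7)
The plan: The ``only if'' direction is immediate. The corner embedding $a \mapsto E_{21}(a)$ from $(A_i, \gamma_i)$ into $(M_2(A_i), \theta_i)$ is $G$-equivariant by the block description of $\theta_i$ given in lemma \ref{lemma61}, and $\phi \otimes 1_{M_2}$ restricts along this embedding to $\phi$ itself; so equivariance of $\phi \otimes 1_{M_2}$ forces equivariance of $\phi$ on the lower-left corner $(A_1,\gamma_1)$. The content of the corollary is therefore the ``if'' direction.

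For that direction, I would write out $\theta_i$ in the block form with diagonal components $\alpha_i,\delta_i$ and off-diagonal components $\beta_i,\gamma_i$ as in lemma \ref{lemma61}. Checking $G$-equivariance of $\phi \otimes 1_{M_2}$ entry by entry then reduces to the four intertwining identities $\phi\circ\alpha_{1,g}=\alpha_{2,g}\circ\phi$, and similarly for $\beta,\gamma,\delta$. Under the hypothesis, the $\gamma$-identity is given, and the corollary becomes the statement that the $\gamma$-intertwining forces the other three. This is a morphism-version of lemma \ref{lemma61}(v), and the natural strategy is to reuse exactly those algebraic relations (\ref{l2})--(\ref{l5}) that were used to prove (v), but now applied on both sides of $\phi$.

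Concretely, the $\beta$-intertwining comes for free from $\beta_g(y)=\gamma_g(y^*)^*$ in (\ref{l5}) together with the fact that $\phi$ is a $*$-homomorphism. For the $\alpha$-intertwining, I would apply $\phi$ to the identity $\alpha_{1,g}(a_i a)=\beta_{1,g}(a_i)\gamma_{1,g}(a)$ from (\ref{l4}), with $(a_i)$ an approximate unit of $A_1$, use the already-established intertwinings for $\beta$ and $\gamma$ on the right-hand side to rewrite it as $\alpha_{2,g}(\phi(a_i)\phi(a))=\alpha_{2,g}(\phi(a_i a))$, and then pass to the norm limit $a_i a \to a$. The $\delta$-intertwining is handled by the mirror identity $\delta_g(xy)=\gamma_g(x)\beta_g(y)$ in (\ref{l4}). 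No step is substantial; the only place calling for any care is the approximate-unit limit, which is routine from norm-continuity of $\phi$ and of the $G$-actions on $A_2$. So the whole proof is a short chain of applications of the identities of lemma \ref{lemma61}(i) combined with one limit argument.
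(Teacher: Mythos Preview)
Your proposal is correct and follows essentially the same approach as the paper. The paper's proof is the one-line ``By lemma \ref{lemma61}.(i), confer also (v)'', which is exactly what you do: use the relations (\ref{l2})--(\ref{l5}) to reduce the $\alpha$-, $\beta$-, and $\delta$-intertwinings to the given $\gamma$-intertwining, mirroring the way (v) shows $\gamma$ determines $\theta$. Your explicit write-up with the approximate-unit limit is a faithful expansion of what the paper leaves implicit.
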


\begin{proof}
By lemma \ref{lemma61}.(i), confer also (v).
\end{proof}

\begin{corollary}			\label{cor79}
Consider the double split exact sequence of definition \ref{def31}.

{\rm (i)} Then 
the ideal 
$M_2(j(B))$
is invariant under the action $\theta$.

{\rm (ii)}
The map $f \otimes 1: (M_2(M),\theta)
\rightarrow (M_2(A),\delta)$
is equivariant for the quotient $G$-action 
$\delta$ on
$M_2(A) \cong M_2(M) / M_2(j(B))$.
%
\end{corollary}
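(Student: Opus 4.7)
The plan is to prove (i) by exhibiting $M_2(j(B))$ as the closed two-sided ideal in $M_2(M)$ generated by the $\theta$-invariant subset $e_{11}(j(B))$, and then to deduce (ii) as a formal consequence of (i). The observation that makes (i) transparent is that the equivariant inclusion $j:(B,\beta) \to (M,\gamma)$ composes with the equivariant corner embedding $e_{11}:(M,\gamma)\to(M_2(M),\theta)$ to give an equivariant map $e_{11}\circ j:(B,\beta)\to(M_2(M),\theta)$. Hence $e_{11}(j(B))$ is a $\theta$-stable subset of $M_2(M)$, and any closed two-sided ideal of $M_2(M)$ generated by a $\theta$-stable set is itself $\theta$-invariant (since $\theta_g$ is a $*$-automorphism).

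The remaining content is the identification of the ideal generated by $e_{11}(j(B))$. Since $j(B)$ is an ideal of $M$, the product of any matrix in $M_2(M)$ with $e_{11}(j(b))$ on either side has entries in $j(B)$ or $0$, so the generated ideal is contained in $M_2(j(B))$. Conversely, using an approximate unit $(u_\lambda)$ of $M$ one obtains
\[
\begin{pmatrix} 0 & 0\\ u_\lambda & 0\end{pmatrix}\begin{pmatrix} j(b) & 0\\ 0 & 0\end{pmatrix} = \begin{pmatrix} 0 & 0\\ u_\lambda j(b) & 0\end{pmatrix}\longrightarrow e_{21}(j(b)),
\]
and analogous computations yield $e_{12}(j(B))$ and $e_{22}(j(B))$, whence by linearity and closure all of $M_2(j(B))$ is generated. (This approximate unit step, needed because $M$ is not assumed unital, is the only mildly non-formal point; everything else is bookkeeping.)

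For (ii), the surjective $*$-homomorphism $f\otimes 1:M_2(M)\to M_2(A)$ has kernel $M_2(\ker f)=M_2(j(B))$, which by (i) is $\theta$-invariant. Therefore $\theta$ descends uniquely to a $G$-action on the quotient $M_2(M)/M_2(j(B))\cong M_2(A)$; this quotient action is what is denoted $\delta$, and $f\otimes 1$ is $(\theta,\delta)$-equivariant by construction. The main (and only) obstacle is the approximate-unit identification of the generated ideal in (i); after that, both statements are direct consequences of the universal property of quotients of $C^*$-dynamical systems.
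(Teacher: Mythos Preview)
Your proof is correct and takes a genuinely different route from the paper's. The paper invokes Lemma~\ref{lemma61}: writing $\theta$ in its corner decomposition $\left(\begin{smallmatrix}\alpha & \beta\\ \gamma & \delta\end{smallmatrix}\right)$, it uses the fact that the upper-left corner action leaves $j(B)$ invariant (by equivariance of $j$) together with the multiplicative relations of Lemma~\ref{lemma61}(i)---for instance $\gamma_g(xb)=\gamma_g(x)\alpha_g(b)$---to conclude that each of $\beta,\gamma,\delta$ sends an element of the form $b^2\in j(B)$ back into $j(B)$, hence preserves the ideal. Your argument bypasses this entry-by-entry analysis entirely by identifying $M_2(j(B))$ as the closed two-sided ideal of $M_2(M)$ generated by the $\theta$-stable set $e_{11}(j(B))$; this is arguably cleaner and does not depend on the structural Lemma~\ref{lemma61}, though the paper's approach has the advantage of reusing machinery already in place.

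One minor caveat: your parenthetical justification ``since $\theta_g$ is a $*$-automorphism'' is not literally correct in the generality of the paper, where $G$ may be an inverse semigroup and each $\theta_g$ is only a $*$-endomorphism. This does not affect your argument, however: all you actually use is that $\theta_g$ is a $*$-homomorphism, so that $\theta_g(asb)=\theta_g(a)\theta_g(s)\theta_g(b)$ lies in the ideal whenever $\theta_g(s)$ does. Part~(ii) is handled the same way in both approaches, as a formal consequence of~(i).
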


\begin{proof}
Write $\theta$ as in lemma \ref{lemma61}.
The corner action $\alpha$ leaves $j(B)$ invariant. 
Hence, by the formulas of lemma \ref{lemma61}.(i) we see that $\gamma, \beta,\delta$ leave an element $b^2 \in j(B)$ invariant.
\end{proof}

\if 0

\begin{corollary}
If we have given a $G$-algebra $(A,\alpha)$
and a Hilbert module $(A,\gamma)$ over $(A,\alpha)$
then we can construct a uniquely determined $\theta$ of the form of the last lemma and satisfying all assumptions and properties of the last lemma.
\end{corollary}

\fi

\begin{definition}
{\rm
Let $U \in A$ be a unitary in a $C^*$-algebra $A$.
Then we define the $*$-homomorphism
$$\kappa_U:M_2(A) \rightarrow M_2(A) :
\kappa_U
\left (\begin{matrix} x & y\\
 z & w
\end{matrix}
\right ) =
\left (\begin{matrix} x  &  y U^*\\
 U z  & U w U^*
\end{matrix} \right ) 
$$
}
\end{definition}

In other words, $\kappa_U={\rm Ad} (1 \oplus U)$ on $\call_D(D \oplus D) \cong M_2(\call_D(D))$
for $A= \call_D(D)$.

Notice that $\kappa_U^{-1} = \kappa_{U^*}$.

\begin{definition}
{\rm
Let $U \in A$ be a unitary in a $G$-algebra $(A,\alpha)$.
Then we write $\theta^U$ for the $G$-action on $M_2(A)$ defined by
$$\theta^U_g = \kappa_{U} \circ ( \alpha_g \otimes 1_{M_2}) \circ \kappa_U^{-1}$$ 
}
\end{definition}


\begin{lemma}	\label{lemma272}
Let $S,T$ be two $G$-actions on a Hilbert $(B,\beta)$-module $\cale$.

Consider a diagram 
$$\xymatrix{
\calk_B(\cale)  
\ar[r] 
& 
\call_B(\cale) \square A \ar[r]    & A 
\ar@<.5ex>[l]^{s_\pm \square} 
}$$
which is double split exact except that
we have not found a $M_2$-action yet.
But we know that $s_-$ is equivariant with respect to 
${\rm Ad}(S)$ on $\call(\cale)$, and $s_+$ is equivariant with respect to
${\rm Ad}(T)$ on $\call(\cale)$.

Equip $M_2(\call_B(\cale) \oplus A)
\cong \call_B(\cale \oplus \cale) \oplus M_2(A)$ with the 
$G$-action 
$${\rm Ad}(S \oplus T) \oplus (\alpha \otimes 1_{M_2}). $$

Then the following assertions are equivalent:
\begin{itemize}

\item[(i)]
 $s_-(a) \big( S_g T_{g^{-1}} - S_g S_{g^{-1}} \big) \in  \calk_B(\cale)$ for all $g \in G$, $a \in A$


\item[(ii)]
$S_g s_-(a) T_{g^{-1}} - s_-(\alpha_g(a)) \in \calk_B(\cale)$
 for all $g \in G$, $a \in A$


\if 0
$s \otimes 1:M_2(M) \rightarrow M_2(A)$
is equivariant

(auch nicht unbedingt, denn es könnte aktion quer über $M_2$ gehen, der $\call$-teil

zb $A=0$)

(stimmt jedoch schon wenn man (wie immer) die aktion  $\delta$ als $(\delta_{ij})$ annimmt)
\fi


\item[(iii)]
$M_2(\call_B(\cale) \square A)$
is a $G$-invariant subalgebra.

\end{itemize}

In case that there is a unitary $U \in \call(\cale)$ such that $T_g \circ U = U \circ S_g$ for all $g \in G$,
that is if 
$${\rm Ad}(S \oplus T) = \theta^{U}$$
for the $G$-action ${\rm Ad}(S)$ on $\call(\cale)$,
these conditions are
also equivalent to

\begin{itemize}

\item[(iv)]
$s_- (a) \big (g(U) - U g(1) \big ) \in \calk_B(\cale)$ for all $g \in G$, $a \in A$
($G$-action is ${\rm Ad}(S)$).

\end{itemize}

\end{lemma}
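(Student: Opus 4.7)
The plan is to analyse condition (iii) via the matrix structure of Lemma \ref{lemma61}, then deduce (i)$\Leftrightarrow$(ii) by elementary manipulation using the equivariance of $s_-$, and finally (i)$\Leftrightarrow$(iv) by exploiting the intertwining unitary $U$.

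For (ii)$\Leftrightarrow$(iii), I first note that under $M_2(\call_B(\cale)) \cong \call_B(\cale \oplus \cale)$ the action ${\rm Ad}(S \oplus T)$ is the $\theta$ of Lemma \ref{lemma61} with corner actions $\alpha = {\rm Ad}(S)$, $\delta = {\rm Ad}(T)$, and off-diagonals $\beta_g(y) = S_g y T_{g^{-1}}$, $\gamma_g(z) = T_g z S_{g^{-1}}$. An element of $M_2(\call_B(\cale) \square A)$ is a pair $(M,X) \in M_2(\call) \oplus M_2(A)$ with $M - (s_- \otimes 1_{M_2})(X) \in M_2(\calk_B(\cale))$. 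Since $M_2(\calk(\cale))$ is already $\theta$-invariant (diagonal corners by ${\rm Ad}(S)$- and ${\rm Ad}(T)$-invariance of $\calk$, off-diagonals by the imprimitivity bimodule property of Lemma \ref{lemma61}.(ii),(iii)), $G$-invariance of $M_2(\call \square A)$ amounts to the entrywise condition ${\rm Ad}(S \oplus T)_g(s_- \otimes 1)(X) - (s_- \otimes 1)(\alpha_g \otimes 1)(X) \in M_2(\calk)$ for all $X \in M_2(A)$. The $(1,1)$ entry vanishes by ${\rm Ad}(S)$-equivariance of $s_-$; the $(2,2)$ entry $T_g s_-(x_{22}) T_{g^{-1}} - s_-(\alpha_g(x_{22}))$ lies in $\calk$ upon replacing $s_-$ by $s_+$ modulo $\calk$ (Remark \ref{rem12}.(v)) and applying ${\rm Ad}(T)$-equivariance of $s_+$; the $(2,1)$ entry is adjoint to the $(1,2)$; and the $(1,2)$ entry is exactly condition (ii).

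For (i)$\Leftrightarrow$(ii), ${\rm Ad}(S)$-equivariance of $s_-$ rewrites $s_-(\alpha_g(a)) = S_g s_-(a) S_{g^{-1}}$, so (ii) becomes $S_g s_-(a)(T_{g^{-1}} - S_{g^{-1}}) \in \calk$. Substituting $a \mapsto \alpha_{g^{-1}}(a)$ and using $S_g s_-(\alpha_{g^{-1}}(a)) = s_-(a) S_g$ (trivial for groups, and to be handled fibrewise via the projection $S_{g^{-1}} S_g$ in the groupoid/inverse semigroup case as in Section \ref{sec2}) converts this to $s_-(a)(S_g T_{g^{-1}} - S_g S_{g^{-1}}) \in \calk$, which is (i).

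For (i)$\Leftrightarrow$(iv) under the hypothesis $T_g \circ U = U \circ S_g$, write $T_{g^{-1}} = U S_{g^{-1}} U^{-1}$, so (i) reads $s_-(a)(S_g U S_{g^{-1}} U^{-1} - S_g S_{g^{-1}}) \in \calk$. Right-multiplying by the unitary $U$ (an invertible multiplier of $\calk$, hence preserving it) turns this into $s_-(a)(S_g U S_{g^{-1}} - S_g S_{g^{-1}} U) \in \calk$. Comparing with (iv) $= s_-(a)(S_g U S_{g^{-1}} - U S_g S_{g^{-1}}) \in \calk$, equivalence reduces to the commutation $S_g S_{g^{-1}} U = U S_g S_{g^{-1}}$, which is trivial for groups (both sides are $U$) and follows fibrewise from $T_g T_{g^{-1}} = U S_g S_{g^{-1}} U^{-1}$ in general. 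The main obstacle I anticipate is precisely this bookkeeping with source/range projections in the inverse semigroup and groupoid settings, where each manipulation involving $S_g S_{g^{-1}}$ or $T_g T_{g^{-1}}$ must respect the fibrewise interpretation outlined in Section \ref{sec2}; everything else is direct matrix arithmetic.
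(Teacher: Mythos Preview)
Your proof is correct and follows essentially the same approach as the paper: analyzing (iii) entrywise via the matrix structure of Lemma~\ref{lemma51}, reducing (i)$\Leftrightarrow$(ii) through the ${\rm Ad}(S)$-equivariance of $s_-$, and handling (i)$\Leftrightarrow$(iv) via the intertwining relation $T_g = U S_g U^*$. If anything, you are more explicit than the paper about the projection bookkeeping in the inverse semigroup case and about closing the implication (ii)$\Rightarrow$(i), which the paper leaves implicit.
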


\begin{proof}
(ii) $\Rightarrow$ (iii):
Let $x:=s_-(a) + k \square a \in X:= \call(B) \square A$ for $a \in A, k \in \calk(B)$.
Put $x$ into the lower left corner of $M_2(X)$
and apply the $G$-action and see what comes out:
\begin{equation}  	\label{eq21}
S_g \big (s_-(a)+ k \big) T_{g^{-1}} \square \alpha_g(a)
= s_-(\alpha_g(a)) + S_g k   T_{g^{-1}}
\square \alpha_g(a)
\in \call(\cale) \square A
\end{equation}

Similarly we get it for the upper right corner by taking the adjoint in (ii).
For the lower right corner we observe 
$$T_g (s_-(a) + k) T_{g^{-1}} \square \alpha_g(a)
= T_g \big (s_+(a) + k' + k \big) T_{g^{-1}} \square \alpha_g(a)
\in \call(\cale) \square A$$
for a certain $k' \in \calk(B)$ by
remark \ref{rem12}.

(iii) $\Rightarrow$ (ii): By (\ref{eq21})
for $k=0$.
(i) $\Rightarrow$ (ii):
$$S_g s_-(a) T_{g^{-1}} = 
S_g s_-(a) S_g S_{g^{-1}} T_{g^{-1}} 
\equiv s_-(\alpha_g(a)) \mod \calk(\cale)$$
Since $T= U \circ S \circ U^*$, (i) $\Leftrightarrow$ (iv) is obvious.
\if 0

We can of course form the $G$-algebra
$M_2(\call(\cale) \oplus A), \alpha \oplus \delta)$. We need to show that the subalgebra
$M_2(\call(\cale) \square A)$ is $G$-invariant.

For all $a \in A, k \in \calk(\cale)$ we have
$$S_g \big (s_+(a)+ k \big) T_{g^{-1}} \oplus \alpha(a)
= S_g \big (s_+(a) + k \big) S_{g^{-1}} S_g  T_{g^{-1}}
\oplus \alpha(a)
\quad \in \call(\cale) \square A$$

For every $a \in A$ there is a $k \in \calk(\cale)$ such that
$$T_g s_+(a) T_{g^{-1}} \oplus \alpha_g(a)
= T_g \big (s_-(a) + k \big) T_{g^{-1}} \oplus \alpha_g(a)
\quad \in \call(\cale) \square A$$

\fi
\end{proof}

By using corollary 
\ref{cor79},
the equivalence between (ii) and (iii)
of the last lemma
may be analogously generalized to diagrams of the form
$\xymatrix{
B  
\ar[r] 
& 
M \square A \ar[r]    & A 
\ar@<.5ex>[l]^{s_\pm \square} 
}$.

\section{Computations with double split exact sequences}

\label{sec8}

From now on, if nothing else is said, the $M_2$-action on $M \square A$ is always understood to be of the form
$\gamma \square (\alpha \otimes 1)$ for $G$-algebras
$(M_2(M),\gamma)$ and 
$(A ,\alpha)$.

Actions on $\call_B(\cale)$ 
will always be of the form ${\rm Ad}(S)$ for a $G$-action $S$ on $\cale$.

\begin{lemma}		\label{lemma100}
Given an equivariant $*$-homomorphism
$f:A \rightarrow B$ 
we get a double split exact sequence
$$\xymatrix{B \ar[r]^i & B \oplus A \ar[r]^g 
 & A  \ar@<.5ex>[l]^{s_\pm} }$$
with $s_-(a)=(0,a), s_+(a)=(f(a),a)$
and one has $f = s_+ \mu \Delta_{s_-}$ in $GK$. 
\end{lemma}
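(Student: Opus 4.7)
The plan is to exhibit a diagonal $M_2$-action on $M_2(B\oplus A)$, show that $\mu$ collapses to the identity by an equivariant rotation, and then read off $s_+\Delta_{s_-}=f$ from the biproduct and split-exactness relations in $GK$.

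Take $\theta := (\beta\oplus\alpha)\otimes 1_{M_2}$ as the $M_2$-action on $M_2(B\oplus A)$; this is consistent with the standing convention $\gamma\square(\alpha\otimes 1)$, since $B\oplus A = B\square A$ arises from the zero map $A\to B$ in the $M\square A$-construction. Under this diagonal $\theta$ both corner embeddings $e_{11},e_{22}$ are equivariant, $g\otimes 1_{M_2}$ is equivariant, and both $s_-$ and $s_+$ are equivariant splits of $g$ (equivariance of $s_+$ uses that $f$ is equivariant). So a genuine double split exact sequence is in front of us.

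For $\mu=1_M$ in $GK$: because $\theta$ is diagonal, the scalar rotation unitaries $u_t = \bigl(\begin{smallmatrix}\cos t & -\sin t\\ \sin t & \cos t\end{smallmatrix}\bigr)\in M_2(\mathbb{C})$ are $\theta$-fixed, so ${\rm Ad}(u_t)$ is an equivariant automorphism of $M_2(B\oplus A)$ for each $t\in[0,\pi/2]$, and $t\mapsto {\rm Ad}(u_t)\circ e_{11}$ is an equivariant $*$-homotopy from $e_{11}$ to $e_{22}$. Homotopy invariance of $GK$ yields $e_{11}=e_{22}$ in $GK$, hence $\mu = e_{22}\circ e_{11}^{-1} = 1_M$.

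For the final computation, in the biproduct $B\oplus A$ of $GK$ the compositions $s_+p_B = f = (fi+s_-)p_B$ and $s_+g = 1_A = (fi+s_-)g$ force $s_+ = fi + s_-$ in $GK$. Composing on the right with $\Delta_{s_-}$ and invoking $i\Delta_{s_-} = 1_B$ from split-exactness together with $s_-\Delta_{s_-}=0$ from remark~\ref{rem12}(vi) gives $s_+\Delta_{s_-} = fi\Delta_{s_-} + s_-\Delta_{s_-} = f\cdot 1_B + 0 = f$. Combined with $\mu=1_M$, this yields $s_+\mu\Delta_{s_-} = f$, as desired. The only mildly delicate point is justifying the rotation homotopy equivariantly; the diagonal form of $\theta$ makes this immediate, so no analytic input is required.
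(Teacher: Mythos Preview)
Your argument is correct and follows the same route as the paper: take the diagonal action $(\beta\oplus\alpha)\otimes 1_{M_2}$, kill $\mu$ by the equivariant rotation homotopy, and reduce $s_+\Delta_{s_-}$ to $f$ via the biproduct relations. The paper phrases the last step as ``$\Delta_{s_-}$ is just the linear split'' (i.e.\ $\Delta_{s_-}=p_B$, whence $s_+\Delta_{s_-}=s_+p_B=f$ directly in $C^*$), while you instead decompose $s_+=fi+s_-$ and use $i\Delta_{s_-}=1_B$, $s_-\Delta_{s_-}=0$; these are equivalent unpackings of the same biproduct identity.
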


\begin{proof}
The $M_2$-space is $(M_2(A \oplus B),(\beta \oplus \alpha) \otimes 1_{M_2})$, $\mu=1$ by a rotation homotopy, $i(b)=(b,0),g(b,a)=a$ and $\Delta_{s_-} = 
(1- g s_-) i^{-1}$
is just the linear split, see remark \ref{rem12}.
\end{proof}

\begin{lemma}		\label{lemma215}

Given the first line and an
equivariant $*$-homomorphism $\varphi$ 
as in this diagram it can be completed to this diagram
$$\xymatrix{
B  
\ar[r]^{i \square 0} 
& M 
\square A 
\ar[r]    & A 
\ar@<.5ex>[l]^{s_\pm \square 1} 
\\
B \ar[r]^{i \square 0}  \ar[u] &
M  \ar[r] \ar[u]^{\phi} \square X & X \ar@<.5ex>[l]^{t_\pm} \ar[u]^\varphi
}$$
such that
$\varphi (s_+ \square 1) \mu \Delta_{s_- \square 1} = t_+ \mu \Delta_{t_-}$
in $GK$.

We assume here that the $G$-action on $M_2(M \square A)$ is of the form $\theta \square (\alpha \otimes 1)$.
\end{lemma}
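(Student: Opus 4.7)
The plan is to construct the lower row via definition \ref{lemma35} and then invoke lemma \ref{lemma21} to obtain the desired equality in $GK$. Set $\sigma := s_- \circ \varphi : X \to M$. Apply definition \ref{lemma35} with the ideal embedding $i : B \to M$ and with $\sigma$ to form
\begin{equation*}
M \square X := \{(\sigma(x) + i(b),\, x) : x \in X,\, b \in B\} \subseteq M \oplus X,
\end{equation*}
with splits $t_\pm(x) := s_\pm(\varphi(x)) \square x$. Both splits indeed land in $M \square X$ because $s_+(\varphi(x)) - s_-(\varphi(x)) \in i(B)$ by remark \ref{rem12}(v). Define the connecting map $\phi(m \square x) := m \square \varphi(x)$, and put $\Phi := \phi \otimes 1_{M_2}$.

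I claim that $\theta \square (\xi \otimes 1)$ is a well-defined $G$-action on $M_2(M \square X)$, where $\xi$ denotes the given $G$-action on $X$. Viewing $M_2(M \square X)$ inside $(M_2(M) \oplus M_2(X), \theta \oplus (\xi \otimes 1))$, the unlabelled lemma preceding lemma \ref{lemma41} reduces this to checking
\begin{equation*}
\theta_g\bigl((\sigma \otimes 1)(y)\bigr) - (\sigma \otimes 1)\bigl((\xi \otimes 1)_g(y)\bigr) \in M_2(i(B))
\end{equation*}
for every $y \in M_2(X)$. Setting $a := (\varphi \otimes 1_{M_2})(y) \in M_2(A)$ and substituting into the corresponding (assumed) invariance condition for $M_2(M \square A)$ produces the first term above, while equivariance of $\varphi$ rewrites the second. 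Hence the action exists. Moreover $\Phi$ extends to the map $1_{M_2(M)} \oplus (\varphi \otimes 1_{M_2})$ on the ambient $M_2(M \oplus X) \to M_2(M \oplus A)$, which is patently equivariant, so $\Phi$ itself is equivariant.

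Finally, I apply lemma \ref{lemma21} to the resulting pair of double split exact sequences, with our upper row playing the role of its lower row and vice versa, so that the downward maps are $b = 1_B$, $a = \varphi$, and $\Phi = \phi \otimes 1_{M_2}$; by remark \ref{cor22} both $\phi$ and $\psi$ of that lemma then coincide with our $\phi$. The hypothesis of (i) is verified on $m \square x \in M \square X$: following the left rectangle either way sends $m \square x$ to $s_-(\varphi(x)) \square \varphi(x)$. Similarly the hypothesis of (ii), which becomes $t_+ \phi = \varphi(s_+ \square 1)$, holds since both sides map $x \in X$ to $s_+(\varphi(x)) \square \varphi(x)$. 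Lemma \ref{lemma21}(iii) then delivers $\varphi(s_+ \square 1) \mu \Delta_{s_- \square 1} = t_+ \mu \Delta_{t_-}$, which is the claim. The only mildly delicate step is the $G$-invariance of the candidate $M_2$-action, which as shown pulls back cleanly from the given invariance for $M_2(M \square A)$.
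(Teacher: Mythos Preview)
Your proof is correct and follows essentially the same approach as the paper: you define $\phi = {\rm id} \square \varphi$, $t_\pm = \varphi s_\pm \square 1$, equip $M_2(M \square X)$ with the action $\theta \square (\xi \otimes 1)$, and verify the claim via lemma \ref{lemma21} with $\Phi = \phi \otimes 1_{M_2}$. You supply more detail than the paper (in particular the justification that $\theta \square (\xi \otimes 1)$ is well-defined and that $\Phi$ is equivariant), but the skeleton is identical.
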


\begin{proof}
Let $X=(X,\gamma)$.
If the $M_2$-action of the first line is
$\theta \square (\alpha \otimes 1)$,
then of the second line put it to $\theta \square (\gamma \otimes 1)$.
Set
$\phi= {\rm id} \square \varphi$
and 
$t_\pm = \varphi s_\pm \square 1$
and check 
the claim with lemma \ref{lemma21} with $\Phi=\phi \otimes 1$.
%
%
\if 0
Aktion

By lemma \ref{lemma41} we may assume the action on $M_2(M \square A) \cong M_2(M) \square M_2(A)$ is of the form
$\gamma \square \alpha$. 
\fi
\end{proof}

\begin{lemma}		\label{lemma14}
Every split exact sequence as in the first line
is isomorphic to the one of the second line
as indicated in this diagram:
$$\xymatrix{
B  \ar[r]^i \ar[d]  & M  \ar[d]^\phi  \ar[r]^f & A \ar@<.5ex>[l]^{s_\pm}  
\ar[d]   \\
B  \ar[r]^j  & \call_B(B) \square  A \ar[r]  & A \ar@<.5ex>[l]^{t_\pm} 
}$$
That is, 
$s_+ \mu \Delta_{s_-} = t_+ \mu \Delta_{t_-}$ in GK.



The $G$-action on $M_2(\call_B(B) \square A)$ is of the form ${\rm Ad}(S \oplus T) \square \delta$,  
where 
$(M_2(A),\delta)$ and $(M_2(\call_B(B)), {\rm Ad}(S \oplus T))$
are 
$G$-algebras.
\end{lemma}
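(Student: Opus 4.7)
The strategy is to exhibit the claimed equivalence by an explicit $*$-homomorphism $\phi : M \to \call_B(B) \square A$ fitting into the vertical arrow of the diagram, to equip $M_2(\call_B(B) \square A)$ with a suitable $G$-action, and then to invoke lemma \ref{lemma21} with $b := 1_B$, $a := 1_A$ and $\Phi := \phi \otimes 1_{M_2}$. Concretely, let $\chi : M \to \call_B(B)$ denote the canonical $*$-homomorphism coming from the ideal inclusion $i(B) \triangleleft M$ (so $\chi(m)$ is the multiplier $b \mapsto mb$), and set
\[
\phi(m) := \chi(m) \square f(m), \qquad t_\pm := \chi s_\pm \square 1,
\]
the bottom row together with $j$ and $g$ being the split exact sequence produced by the $M \square A$-construction of definition \ref{lemma35}. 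The image of $\phi$ lies in $\call_B(B) \square A$ (with respect to $t_-$) because $m - s_-(f(m)) \in i(B)$.

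The crux is the $M_2$-action on the new middle algebra. I would set $S := \beta$ and let $T$ be the lower-left corner action of $\theta$ (in the sense of lemma \ref{lemma61}) restricted to the ideal $B \subseteq M$. The identities (\ref{l2}), (\ref{l4}), (\ref{l5}) of lemma \ref{lemma61}.(i) imply that $T$ is a genuine $G$-action on $B$ viewed as a Hilbert $(B,\beta)$-module (right $B$-linearity from (\ref{l2}), inner-product invariance from (\ref{l5}) combined with (\ref{l4})), so lemma \ref{lemma51} produces ${\rm Ad}(S \oplus T)$ on $M_2(\call_B(B))$. Together with the quotient action $\delta$ on $M_2(A)$ afforded by corollary \ref{cor79}, this gives the candidate ${\rm Ad}(S\oplus T) \square \delta$ on $M_2(\call_B(B) \square A)$.

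The central equivariance check is that
\[
\chi \otimes 1_{M_2} : (M_2(M),\theta) \to \bigl(M_2(\call_B(B)), {\rm Ad}(S \oplus T)\bigr)
\]
is $G$-equivariant; by corollary \ref{cor74} it suffices to check this on the lower-left corner, where the verification unwinds to precisely (\ref{l2}). Combined with equivariance of $f \otimes 1$ (corollary \ref{cor79}) this shows $\phi \otimes 1_{M_2}$ is $G$-equivariant, and the criterion following definition \ref{lemma35} (or equivalently lemma \ref{lemma272} applied with $\cale = B$) then certifies that ${\rm Ad}(S \oplus T) \square \delta$ is indeed a well-defined $G$-action on $M_2(\call_B(B) \square A)$, the remaining compatibility collapsing once more to the split-exactness relation $m - s_-(f(m)) \in i(B)$.

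With the $M_2$-data in place, lemma \ref{lemma21} finishes the job: remark \ref{cor22} gives $\psi = \phi$ in its notation, and the three compatibility identities demanded by parts (i) and (ii), namely
\[
\phi \circ i = j, \qquad \phi \circ s_- \circ f = t_- \circ g \circ \phi, \qquad \phi \circ s_+ = t_+,
\]
are all immediate consequences of the formula $\phi(m) = \chi(m) \square f(m)$; lemma \ref{lemma21}.(iii) then delivers $s_+\mu\Delta_{s_-} = t_+\mu\Delta_{t_-}$ in $GK$, which is the claim. The main obstacle lies in locating the Hilbert-module action $T$: it cannot be read off the two corner $G$-actions $(\gamma,\delta)$ of $\theta$ on $M$ alone, but must be extracted from the off-diagonal (lower-left) data of $\theta$ by unwinding the formulas of lemma \ref{lemma61}.(i), and only once $T$ has been identified does the equivariance of $\chi \otimes 1_{M_2}$ become visible and turn the rest of the proof into routine bookkeeping.
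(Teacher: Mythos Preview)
Your proof is correct and follows essentially the same route as the paper. You define $\phi(m)=\chi(m)\square f(m)$ and $t_\pm=\chi s_\pm\square 1$, extract the Hilbert $(B,\beta)$-module action $T$ from the lower-left corner of $\theta$, build ${\rm Ad}(S\oplus T)\square\delta$ via lemmas \ref{lemma51} and \ref{lemma61}, and finish with lemma \ref{lemma21}; the paper does the same, except that it first exploits the bijectivity of $\phi$ to \emph{transport} the $M_2$-action along $\Phi=\phi\otimes 1_{M_2}$ (making equivariance automatic) and only afterwards identifies the resulting action as ${\rm Ad}(S\oplus T)\square\delta$, whereas you construct the action directly and then verify equivariance via corollary \ref{cor74}.
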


\begin{proof}
Set $j=i \phi$.
Define $\chi$ analogously as in (\ref{chi}).
%
Set 
$\phi(m)= \chi(m) \square f(m)$.
Put $t_\pm(a)= \chi(s_\pm(a)) \square a$.
%
%
Note that $\phi$ is bijective.
Set $\Phi = \phi \otimes 1_{M_2}$ 
and
define the 
$G$-action on its range in such a way that
$\Phi$ becomes $G$-equivariant.
Verify with lemma
\ref{lemma21}.

For the sake of simpler notation we assume now that $i$ is the identity embedding.
By lemma \ref{lemma61}.(iv), $M_2(M)$
embedds equivariantly into $\call_M(M \oplus M,{\rm Ad}(S \oplus T))$, confer the formula in lemma
\ref{lemma51}.

But $S$ restricts to a $G$-action on $B$, and $T$ restricts to a Hilbert $(B,S)$-module on $B$ 
(because $T(b^2)= T(b) S(b) \in B$).

Hence by \ref{lemma61}.(vii)
we can equip $\call_B(B \oplus B)$
with the $G$-action ${\rm Ad}(S \oplus T)$ as well, or in other words, use the same formula as in lemma \ref{lemma51}.
The $G$-action $\delta$ 
comes from corollary \ref{cor79}. 
\if 0
Set $\Phi = \phi \otimes 1_{M_2}$ 
and
define the 
$G$-action on its range 
as stated in the lemma.
%
Verify with lemma
\ref{lemma21}.
%
%
\fi
\end{proof}


\begin{definition}				\label{def75}
{\rm
Let $(B,\beta)$ be a $G$-algebra.
Write $\H_B := \bigoplus_{i=1}^\infty B$ for the infinite direct sum Hilbert $(B,\beta)$-module.
We shall equip $\H_B$ with various $G$-actions $S$, but 
often require that $S$ is of the form
$S=\beta \oplus T$ for a $G$-action $T$ on $\H_B$
($S_g(b_1 \oplus b_2 \oplus b_3 \oplus \ldots)=
\beta_g(b_1) \oplus T(b_2 \oplus b_3 \oplus \ldots)$).
The letter $R$ will always stand for such a $G$-action and 
we may pick out
$R_0:= \beta \oplus {\rm triv}$ deliberately. 
}
\end{definition}

\if 0
If we introduce a copy of $\H_B$ then we equip it with that action:

\begin{definition}
{\rm
Let $(B,\beta)$ be a $G$-algebra.
We turn the infinite direct sum Hilbert $B$-module $\H_B := \bigoplus_{i=1}^\infty B$
to a $G$-Hilbert $(B,\beta)$-module
by setting 
$$R_g(b_1 \oplus b_2 \oplus b_3 \oplus b_4\oplus  \ldots)):= \beta_g(b_1) \oplus 
b_2 \oplus  b_3 \oplus  b_4 \oplus \ldots$$
}
\end{definition}

Important is here only that one coordinate summand has the $\beta$-action and the rest summand has any other action which we have chosen deliberately to be the trivial action.
\fi 

If a copy of $\H_B$ is derived from
another construction, say
the Kasparov 
stabilization theorem then we always keep the original $G$-action:

\begin{lemma}				\label{lemma86}
Let $(\cale,S)$ and $(\H_B, \beta \oplus T)$ be $G$-Hilbert
$(B,\beta)$-modules.
Then there is a $G$-Hilbert $(B,\beta)$-module
isomorphism
$$Y:(\cale,S) \oplus (\H_B,\beta \oplus T) \rightarrow ( 
\H_B, \beta \oplus V)	.$$
\end{lemma}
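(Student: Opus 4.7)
My plan is to reduce the statement to the ordinary (non-equivariant) Kasparov stabilization theorem, exploiting the observation that the first coordinate of $(\H_B,\beta\oplus T)$ is \emph{already} the $(B,\beta)$-summand needed to produce the $\beta\oplus V$ structure on the right-hand side; no separate ``reservoir'' of a $\beta$-copy has to be created.

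First I split off that first coordinate via the tautological shift isomorphism $\H_B \cong B \oplus \H_B$ of Hilbert $(B,\beta)$-modules, which gives the $G$-equivariant identification
$$(\cale, S) \oplus (\H_B, \beta \oplus T) \;\cong\; (B,\beta) \oplus \bigl[(\cale, S) \oplus (\H_B, T)\bigr].$$
Set $\calg := (\cale, S) \oplus (\H_B, T)$, equipped with the $G$-action $S \oplus T$; as a Hilbert $B$-module $\calg$ is countably generated, so the (non-equivariant) Kasparov stabilization theorem supplies a Hilbert $B$-module isomorphism $Z:\calg \to \H_B$. I then define the $G$-action $V$ on $\H_B$ by transport of structure, namely $V_g := Z \circ (S\oplus T)_g \circ Z^{-1}$, so that $Z:\calg \to (\H_B,V)$ is tautologically $G$-equivariant. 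Applying the inverse shift to reassemble,
$$(B,\beta) \oplus \calg \;\cong\; (B,\beta) \oplus (\H_B, V) \;\cong\; (\H_B, \beta\oplus V),$$
and the composition of these three equivariant isomorphisms is the required $Y$. The resulting action on the target is of the form $\beta \oplus V$ prescribed by Definition~\ref{def75}.

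There is no substantive obstacle: the argument is essentially bookkeeping once one sees that the pre-existing $(B,\beta)$-coordinate of $(\H_B,\beta\oplus T)$ can be used as the distinguished $\beta$-copy of the target, rather than having to be produced from scratch. The only non-trivial input is the classical Kasparov stabilization theorem applied to $\calg$; the subsequent transport of the $G$-action along $Z$ is cost-free because $V$ is \emph{defined} to make $Z$ equivariant, and the shift/reassociation steps are manifestly equivariant.
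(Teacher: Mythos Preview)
Your proof is correct and follows essentially the same approach as the paper: isolate the distinguished first coordinate $(B,\beta)$ on which $Y$ acts as the identity, apply the non-equivariant Kasparov stabilization theorem to the remaining summand $\cale\oplus\H_B$, and define $V$ by transport of structure so that the resulting isomorphism is tautologically equivariant. You have simply spelled out the reassociation and shift steps that the paper leaves implicit.
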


\begin{proof}
Excluding the first coordinates $(B,\beta)$ of the $\H_B$s on which $Y$ is set to be the identity, 
we apply Kasparov's non-equivariant stabilization theorem to obtain $Y$, and define the $G$-action $V$ in such
a way that 
$Y$ becomes $G$-equivariant.
\end{proof}

\begin{definition}
{\rm
Define the $C^*$-algebra isomorphism
$$\kappa : B \otimes \calk
\rightarrow
\calk_B 
(\H_B) \subseteq \call_B 
(\H_B):
\kappa\big ((b_{ij} \otimes e_{ij})_{i,j}
\big ) \big((\xi_k)_k \big)
= \Big (\sum_j b_{ij} \xi_j \Big )_i$$
where $b_{ij}, \xi \in B$ and $1 \le i,j,k$.
We equip $B \otimes \calk$ with the $G$-action
such that $\kappa$ 
becomes equivariant.
That is, if $\gamma$ is the $G$-action on $\call_B(\H_B)$ then $\delta = \kappa^{-1} \circ \gamma \circ \kappa$ is the $G$-action on $B \otimes \calk$.

\if 0
The corner $G$-algebra $(B, \beta)$ of $B \otimes \calk$
is often denoted by $e$, as well as the equivriant corner embedding $e:(B,\beta) \rightarrow B \otimes \calk$.
\fi
}
\end{definition}

\begin{definition}
{\rm
If the $G$-action on $\H_B$ is $\beta \oplus S$ then we have an $G$-invariant corner embedding $(B,\beta) \rightarrow (B \otimes \calk, \delta)$ which we denote by $e_\delta$ or $e$ if $\delta$ is understood.
}
\end{definition}

In lemma  \ref{lemma215} we saw how we can merge a homomorphism from the right hand side with a split exact sequence.
The next lemma is the analogy from the left hand side.

\begin{lemma}	\label{lp}

Let the first line of the following diagram be given, where $f$ denotes an equivariant $*$-homomorphism.
Then it can be completed to this diagram
$$\xymatrix{
C \ar[d]^{e} & B  \ar[l]_f \ar[r]^i 
& 
\call_B(B) \square A \ar[rr] 
\ar[d]^\phi  &&  A 
\ar@<.5ex>[ll]^{s_\pm \square} 
\ar[d]
\\
C \otimes \calk \ar[rr]^\kappa 
& 
& 
\call_C \big( B \otimes_f C \oplus 
\mathbb{H}_C
\big ) \ar[rr] \square A  && A \ar@<.5ex>[ll]^{t_\pm}
}$$
such that $(s_+ \square 1) \mu \Delta_{s_- \square 1} f e = t_+ \mu \Delta_{t_-}$
in $GK$.
\end{lemma}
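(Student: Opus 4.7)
The strategy is to reduce the claim to Lemma~\ref{lemma21} by building the bottom row from $f$ and $e$ via an internal tensor product Hilbert module absorbed into the standard Hilbert module.

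Equip $\cale := B \otimes_f C$ with the natural $G$-action $U_g(b \otimes c) := \beta_g(b) \otimes \gamma_g(c)$, well defined by equivariance of $f$. Left multiplication on $B$ yields a nondegenerate equivariant $*$-homomorphism $\pi_\cale \colon B \to \calk_C(\cale)$, which extends strictly to $\tilde{\pi} \colon \call_B(B) = \calm(B) \to \call_C(\cale)$. Form $\mathcal{F} := \cale \oplus \H_C$ with the action $U \oplus (\gamma \oplus T)$, where $\gamma \oplus T$ is the $\H_C$-action underlying the corner embedding $e$. Lemma~\ref{lemma86} produces an equivariant unitary $Y \colon \mathcal{F} \to \H_C$; composition with the standard isomorphism $C \otimes \calk \cong \calk_C(\H_C)$ yields the equivariant embedding $\kappa \colon C \otimes \calk \to \calk_C(\mathcal{F}) \subseteq \call_C(\mathcal{F})$.

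Define $t_\pm(a) := (\tilde{\pi}(s_\pm(a)) \oplus 0) \square a$ and the vertical map $\phi(x \square a) := (\tilde{\pi}(x) \oplus 0) \square a$. Well-definedness of $\phi$ into the $\square$-subalgebra uses the key containment $\pi_\cale(B) \subseteq \calk_C(\cale) \oplus 0 \subseteq \calk_C(\mathcal{F}) = \kappa(C \otimes \calk)$. I take the $M_2$-action on $M_2(\call_C(\mathcal{F}) \square A)$ of the form ${\rm Ad}(W \oplus W') \square \delta$ (Lemma~\ref{lemma51}) with $\delta$ inherited from the first row and $W, W'$ chosen so that $\Phi := \phi \otimes 1_{M_2}$ is equivariant; by Corollary~\ref{cor74} it suffices to arrange equivariance on the lower-left corner. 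Applying Lemma~\ref{lemma21} with $b := fe$, $a := 1_A$ and $\Phi$: condition (ii) is immediate since $\phi \circ (s_\pm \square 1) = t_\pm$ by construction; condition (i) reduces to the equality $\pi_\cale(b') \oplus 0 = \kappa(fe(b'))$ in $\calk_C(\mathcal{F})$ for every $b' \in B$.

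The main obstacle is aligning the two $B$-representations $\pi_\cale \oplus 0$ and $\kappa \circ fe$ of $B$ on $\mathcal{F}$. Both land in compacts and represent the same KK-class (namely $[f]$, modulo Morita equivalence $C \otimes \calk \sim C$). Since $\pi_\cale \oplus 0$ absorbs the zero representation via its $\H_C$-summand, an equivariant Voiculescu-type absorption argument furnishes an equivariant unitary $V \in \call_C(\mathcal{F})$ intertwining them. Replacing $Y$ by $V^* Y$ (equivalently, $\kappa$ by ${\rm Ad}(V^*) \circ \kappa$) yields the required equality on the nose, and Lemma~\ref{lemma21}(iii) then delivers $(s_+ \square 1)\mu \Delta_{s_- \square 1} fe = t_+ \mu \Delta_{t_-}$ in $GK$.
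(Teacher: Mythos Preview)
Your setup is essentially the paper's: the map $\phi(x\square a)=(x\otimes 1\oplus 0)\square a$, the splits $t_\pm=(s_\pm\square 1)\phi$, and the appeal to Lemma~\ref{lemma21} with $b=fe$, $a=1_A$, $\Phi=\phi\otimes 1_{M_2}$. Condition~(ii) and the second half of condition~(i) check out exactly as you say.

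The gap is in the remaining identity $i\phi=fe\kappa$. You invoke an ``equivariant Voiculescu-type absorption argument'' to produce a unitary $V\in\call_C(\mathcal F)$ with $V(\pi_\cale\oplus 0)V^*=\kappa\circ fe$ on the nose. No such theorem is available here: Voiculescu-type results yield \emph{approximate} unitary equivalence modulo compacts, not exact intertwining, and in any case the two underlying $B$-modules are $C_0:=\overline{f(B)C}$ versus $C$, which are not isomorphic Hilbert $C$-modules when $f$ is not nondegenerate (e.g.\ $f$ the inclusion $C_0((0,1))\hookrightarrow C_0(\mathbb R)$). So the unitary you need simply does not exist in general, and adjusting $Y$ cannot repair this.

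The point you are missing is that Lemma~\ref{lemma21}(i) only requires $i\phi=bj$ as an identity in $GK$, not in $C^*$; a homotopy therefore suffices. The paper exploits this: identify $B\otimes_f C\cong C_0$ and observe that $M_2(f(B))$ acts on $C_0\oplus C$ (the second summand being the distinguished first coordinate of $\H_C$) by matrix multiplication, with image inside $\kappa(C\otimes\calk)$. The standard rotation in $M_2(f(B))$ then gives a homotopy from $i\phi$ (which is $f(\cdot)$ on the $C_0$-summand) to $fe\kappa$ (which is $f(\cdot)$ on the $C$-summand), and this is all that Lemma~\ref{lemma21} needs. Replace your Voiculescu paragraph with this rotation argument and the proof goes through.
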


\begin{proof}
Let $C=(C,\gamma)$.
We set 
$$\phi(T \square a)= T \otimes 1 \oplus 0 
\square a$$
and $t_\pm = (s_\pm \square 1) \phi$.
If the $M_2$-action of the first line 
is 
${\rm Ad}(S \oplus
T) \square \delta$
 (confer lemma \ref{lemma14}),
then we set it to
${\rm Ad}(S \otimes \gamma \oplus R_0,
T \otimes \gamma \oplus R_0) \square 
\delta$ in the second line.

We have a $G$-Hilbert $C$-module isomorphism
$$B \otimes_f C \rightarrow C_0:= \overline{ 
f(B) C} \subseteq C
: b\otimes c \mapsto f(b)c
$$
(norm closure of sums) into a $G$-Hilbert $C$-submodule $C_0$ of $C$.

We have an equivariant $*$-homomorphism
$$h:M_2(f (B)) \rightarrow \call_{C}(C_0 \oplus C) \subseteq \call_{C}(C_0 \oplus \H_C)$$
by matrix-vector multiplication,
where the summand $C$ means here the 
distinguished first coordinate $(C,\gamma)$ of $\H_C$.


That is why we can rotate 
$i \phi$
to $g$ for
$$g(b)= (0 \otimes 0 \oplus (f(b) \oplus 0)) \square 0$$
by a homotopy in 
the image of $h$, which is in
$\kappa(C \otimes \calk)$.

Thus $i \phi=g = f e \kappa$ in $GK$.
It is now easy to verify with lemma \ref{lemma21}.
\if 0

the subspace $h(T \square a)$ + 
image of $h$, which lies in
$\kappa(C \otimes \calk)$.

$
\phi|_B \cong f \oplus 0 \oplus 0 \square {\rm id}$
to 
$$0 \oplus f \oplus 0 \square {\rm id}:B \rightarrow \call_C(D \oplus C \oplus \H_C) \square A$$
%
by a homotopy
of $*$-homomoprhisms in $M_2(f(B)) \subseteq \kappa(C \otimes \calk)$.

That is why the left rectangle of the diagram
commutes in $GK$, that is, $i \phi = f e \kappa$.

That is why $\psi|_B =  \phi \oplus 0 \oplus 0 \ldots$ is homotopic to $g = 
0 \oplus \phi \oplus 0 \ldots$

We identify
$$\call_C \big(\phi(B) C \oplus C \oplus \mathbb{H}_C
\big ) \cong 
\call_C \big(C \oplus \mathbb{H}_C
\big )
\cong
\call_C \big(\mathbb{H}_C
\big )$$
by Kasparov summodule theorem, where
we leave the single factor $C$ identical.

We define $u$ by embedding the corner
of $C \otimes \calk$ to the operator
$C \rightarrow C$ of the single fixed $C$.

Now
$$\call_C (\H_B) \cong \calm(C \otimes \calk)$$
and $u$ is exactly the canonical ideal embedding.

Damit sieht man dass $C \otimes \calk$ ideal, aber man lässt die ursprüngliche $G$-action.
man definiert auf $C \otimes \calk$ die $G$-action
so, dass $u$ $G$-equivariant. 

\fi
\end{proof}

\begin{lemma}		\label{lemma83}

Given the upper right double split exact sequence
of this diagram 
one can draw this dagram
$$\xymatrix{
B \otimes \calk 
\ar[d]
&
\calk_B(\cale) \ar[l]_h \ar[r]^j
&
\call_B (\cale )  \square A \ar[rr] 
\ar[d]^\phi
&& A 
\ar@<.5ex>[ll]^{s_\pm \square } 
\ar[d]
\\
B \otimes \calk \ar[rr]^\kappa &&
\call_B (\cale \oplus \H_B
) \ar[rr] 
\square A && A \ar@<.5ex>[ll]^{t_\pm}
}$$
such that
the first line is the second line in $GK$, i.e. 
$(s_+ \square 1) \mu \Delta_{s_- \square 1} h = t_+ \mu \Delta_{t_-}$.

\end{lemma}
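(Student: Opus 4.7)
The proof will mirror that of Lemma \ref{lp}, now absorbing the homomorphism on the ideal side rather than the base side. The first step is to define the middle vertical arrow $\phi(T \square a) := (T \oplus 0) \square a$ and the splits $t_\pm(a) := (s_\pm(a) \oplus 0) \square a$. If the $M_2$-action on the top row is written as ${\rm Ad}(S \oplus T) \square \delta$ (with $S$ making $s_-$ equivariant and $T$ making $s_+$ equivariant, per the standing conventions of section \ref{sec8} and Lemma \ref{lemma14}), equip the bottom row with the $M_2$-action ${\rm Ad}((S \oplus R) \oplus (T \oplus R)) \square \delta$, where $R$ is a reference action on $\H_B$ from Definition \ref{def75}. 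Equivariance of $\phi$ and of $t_\pm$ is then immediate, and validity of the bottom $\square$-construction on the $M_2$-level reduces through Lemma \ref{lemma272} to the condition already satisfied on the top row.

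The second step constructs the ideal embedding and the map $h$. Lemma \ref{lemma86} provides a $G$-Hilbert $(B,\beta)$-module isomorphism $Y : (\cale, S) \oplus (\H_B, R) \xrightarrow{\cong} (\H_B, \beta \oplus V)$ for some $G$-action $V$. Under conjugation by $Y$, the standard corner embedding $k \mapsto k \oplus 0$ of $\calk_B(\cale)$ into $\calk_B(\cale \oplus \H_B)$ is transported to an equivariant map $\calk_B(\cale) \to \calk_B(\H_B)$; composing with $\kappa^{-1}$ (where $B \otimes \calk$ is equipped with the $G$-action that makes $\kappa$ equivariant) yields the desired equivariant homomorphism $h : \calk_B(\cale) \to B \otimes \calk$. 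By construction, the left-hand square then commutes on the nose: $j\phi = h\kappa$ as equivariant $*$-homomorphisms $\calk_B(\cale) \to \call_B(\cale \oplus \H_B) \square A$.

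The final step is to invoke Lemma \ref{lemma21} with $\Phi = \phi \otimes 1_{M_2}$, $b = h$, and $a = 1_A$. Condition (i)'s left-square identity $j\phi = h\kappa$ is exactly what was just verified, while the compatibilities $f s_- \phi = \phi g t_-$ and $s_+ \psi = t_+$ reduce under the definitions of $\phi$ and $t_\pm$ to $\phi(s_\pm(a) \square a) = t_\pm(a)$, which is tautological. Lemma \ref{lemma21}.(iii) then yields $(s_+ \square 1) \mu \Delta_{s_- \square 1} h = t_+ \mu \Delta_{t_-}$ in $GK$. The main obstacle is bookkeeping $G$-actions through the non-canonical Kasparov isomorphism $Y$: one must verify that with the chosen bottom $M_2$-action, both $\phi$ and $\kappa$ remain equivariant, and that the newly defined $h$ (obtained through $Y$-conjugation followed by $\kappa^{-1}$) is itself equivariant.
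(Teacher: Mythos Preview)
Your proof is correct and follows essentially the same route as the paper: define $\phi(T\square a)=(T\oplus 0)\square a$, $t_\pm=(s_\pm\oplus 0)\square 1$, equip the bottom row with the $M_2$-action ${\rm Ad}(S\oplus R,\,T\oplus R)\square\delta$, and verify via Lemma~\ref{lemma21} with $\Phi=\phi\otimes 1_{M_2}$. The only difference is cosmetic: the paper simply \emph{defines} $h:=j\phi\kappa^{-1}$ (so that $j\phi=h\kappa$ is tautological) and declares the $G$-action on $B\otimes\calk$ to be the one making $\kappa$ equivariant, thereby absorbing the stabilization $\cale\oplus\H_B\cong\H_B$ into its abuse of the symbol $\kappa$; you unpack this step explicitly by invoking Lemma~\ref{lemma86} to produce the isomorphism $Y$ and transporting along it.
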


\begin{proof}
We set
$\phi(T \square a) = T \oplus 0 \square a$,
$t_\pm = s_\pm \oplus 0 \square {\rm id}$
and 
$h= j \phi \kappa^{-1}$.
If ${\rm Ad}(S \oplus T) \square 
\delta$ is the $M_2$-action of the first line, we put it to ${\rm Ad}(S \oplus R_0, T \oplus R_0) \square
\delta$ on the second line.
Recall that $B \otimes \calk$ has the $G$-action
$\kappa^{-1} \circ {\rm Ad}(S \oplus R_0) \circ \kappa$.
Verify with lemma \ref{lemma21}. 
\end{proof}


\begin{lemma}   \label{lemma91}

Let the first line of the following diagram be given and $\phi_t$ be evaluation at time $t \in [0,1]$. Then it 
can be completed to this diagram
$$\xymatrix{
B[0,1]		
\ar[d]^{\phi_t}   \ar[r]^e
&
B[0,1] \otimes \calk
\ar[r] 
\ar[d]^{a_t}
&
\call(\cale \oplus \H_{B[0,1]})  \square A \ar[rr] 
\ar[d]^{b_t} && A 
\ar@<.5ex>[ll]^{s_\pm} 
\ar[d]
\\
B  \ar[r]^{e_t} & 
B \otimes \calk \ar[r] 
&
\call(\cale \otimes_{\phi_t} B
 \oplus \H_B) \square A 
 \ar[rr] 
 && A \ar@<.5ex>[ll]^{s_\pm b_t} 
}$$

such that 
$s_+ \mu \Delta_{s_-} e^{-1} \phi_t
= s_+ b_t \mu \Delta_{s_- b_t} {e_t}^{-1}$
for all $t \in[0,1]$
and these elements do not depend on $t$.


\end{lemma}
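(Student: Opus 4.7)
The plan is to realize the vertical maps as pushforwards by $\phi_t$ and then invoke lemma \ref{lemma21}. Define $b_t$ on the middle term by sending $T \square a$ with $T \in \call_{B[0,1]}(\cale \oplus \H_{B[0,1]})$ to $(T \otimes_{\phi_t} 1_B) \square a$, where the interior tensor product is with $B$ viewed as a $B[0,1]$-$B$-bimodule via $\phi_t$. After identifying $\H_{B[0,1]} \otimes_{\phi_t} B$ with $\H_B$ in a $G$-equivariant way by lemma \ref{lemma86} (preserving the distinguished first summand on which $\beta$ acts), the image lies in $\call_B(\cale \otimes_{\phi_t} B \oplus \H_B) \square A$. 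The map $a_t$ is the restriction of $b_t$ to the compact ideal, which under the canonical isomorphisms $B[0,1] \otimes \calk \cong \calk_{B[0,1]}(\H_{B[0,1]})$ and $B \otimes \calk \cong \calk_B(\H_B)$ is essentially $\phi_t \otimes 1_\calk$. The $M_2$-action on the second line is taken to be the pushforward of the first line's action along $b_t \otimes 1_{M_2}$; that it has the required form $\gamma \square \delta$ is guaranteed by corollary \ref{cor79}. Set the splits of the second line to $s_\pm b_t$, so the right square of lemma \ref{lemma21} commutes on the nose.

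To verify the hypotheses of lemma \ref{lemma21} take $\Phi = b_t \otimes 1_{M_2}$ together with $a = \mathrm{id}_A$ and $b = \phi_t$. For the left square, naturality of the corner embedding gives $e\, a_t = \phi_t\, e_t$, from which the conditions $i \phi = b j$ and $f s_- \phi = \phi g t_-$ of lemma \ref{lemma21}(i) are immediate. Condition (ii) reduces to $s_+ b_t = s_+ b_t$, which holds by construction. Therefore lemma \ref{lemma21}(iii) yields
$$s_+ \mu \Delta_{s_-} a_t \;=\; s_+ b_t \, \mu \, \Delta_{s_- b_t}$$
in $GK$. Combined with $a_t e_t^{-1} = e^{-1} \phi_t$, which follows from $e\, a_t = \phi_t\, e_t$ by inverting the corner embeddings, one obtains
$$s_+ \mu \Delta_{s_-} e^{-1} \phi_t \;=\; s_+ \mu \Delta_{s_-} a_t e_t^{-1} \;=\; s_+ b_t \, \mu \, \Delta_{s_- b_t} \, e_t^{-1}.$$

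For the independence of $t$, I invoke homotopy invariance directly: for each $t$, the $G$-equivariant $*$-homomorphism $\psi_t: B[0,1] \to B[0,1]$ defined by $\psi_t(f)(s) = f(ts)$, viewed as an element of $C^*(B[0,1], B)$ via $B[0,1] = (B)[0,1]$, satisfies $(\psi_t)_0 = \phi_0$ and $(\psi_t)_1 = \phi_t$, so $\phi_0 = \phi_t$ in $GK$. Since $s_+ \mu \Delta_{s_-} e^{-1}: A \to B[0,1]$ does not depend on $t$, the composition is independent of $t$ in $GK$. The main obstacle is the equivariance bookkeeping: one must check that the pushforward $M_2$-action on the second line is well-defined and of the required form $\gamma \square \delta$, and that the isomorphism $\H_{B[0,1]} \otimes_{\phi_t} B \cong \H_B$ from lemma \ref{lemma86} yields a $G$-action of the allowed form $\beta \oplus T$, so that the corner embedding $e_t$ is legitimate in the sense specified before the statement of the lemma.
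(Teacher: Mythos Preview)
Your approach matches the paper's: define $b_t(T\square a)=T\otimes_{\phi_t}1\square a$, apply lemma \ref{lemma21} with $\Phi=b_t\otimes 1_{M_2}$, and conclude independence from $\phi_0=\phi_t$ by homotopy invariance. One slip: in the lemma \ref{lemma21} notation you should have $b=a_t$ (the map between the ideals $B[0,1]\otimes\calk\to B\otimes\calk$), not $b=\phi_t$; your subsequent computation is in fact consistent with $b=a_t$, and the separate relation $e\,a_t=\phi_t\,e_t$ then supplies the $e^{-1}\phi_t$ at the end. Your closing worries are overcautious compared to the paper, which simply writes the second-line $M_2$-action explicitly as ${\rm Ad}((S\oplus R)\otimes_{\phi_t}\beta,\,(T\oplus R)\otimes_{\phi_t}\beta)\square\delta$; the identification $\H_{B[0,1]}\otimes_{\phi_t}B\cong\H_B$ is the canonical coordinate-wise one $f\otimes b\mapsto f(t)b$ and needs neither lemma \ref{lemma86} nor corollary \ref{cor79}.
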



\begin{proof}
Here $a_t$ and $b_t$ with $b_t(T \square a)=T \otimes_{\phi_t} 1 \square a$
are the evaluation maps at time $t \in [0,1]$.

Since $\phi_t$ is the evaluation of the identity homotopy on $B[0,1]$ in $C^*$
(which, note, $a_t$ 
is not)
$\phi_0 = \phi_t$ for all $t$.
 
If the $M_2$-action of the first line of the diagram is ${\rm Ad}(S \oplus R,T\oplus R) \square \delta $ 
then of the second line it is
${\rm Ad}((S  \oplus R)\otimes_{\phi_t} \beta,
(T\oplus R) \otimes_{\phi_t} \beta)
\square \delta$.  

Now verify the claim with lemma \ref{lemma21} and
$\Phi= b_t \otimes 1$.
\if 0
Here, $a=a_t= \mbox{id} \otimes 1$
and $b= (\mbox{id} \otimes 1) \square \mbox{id}$
are the evaluation maps at time $t$.
The first line is the same for different
$t \in [0,1]$ because
$(a_t)_{t \in [0,1]}$ is a homotopy in $C^*$.


The claim follows easily from Lemma, 
indeed obviously $a I_{s_+ b}= I_{s_+} b$ and $f s_+ b = b p s_+ b$. 
\fi
\end{proof}

Normally, a homotopy runs in 
a fixed algebra with a 
fixed $G$-action. If we combine 
homotopy with matrix technique, we can however allow homotopies where the $G$-action of the range algebra, and so the range object changes:

\begin{lemma}		\label{lemma123}
Let $s: A \rightarrow \big ( M_2(X[0,1]), (\theta^{(t)})_{t \in [0,1]} \big )$
be an equivariant 
homomorphism into the lower right corner for $\theta$ as in lemma \ref{lemma61}.
Assume that the upper left corner action $\theta_{11}^{(t)}$ does not depend on $t \in [0,1]$.
Then $s_0 {e_{11}^{(0)}}^{-1} = s_1 {e_{11}^{(1)}}^{-1}:A \rightarrow X$ in $GK$.
\end{lemma}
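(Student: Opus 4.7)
The plan is to transfer the problem from the lower right corner, where the $G$-action $\theta^{(t)}$ varies with $t$, to the upper left corner, where by assumption the action $\alpha := \theta_{11}^{(t)}$ is constant in $t$. Once we get there, ordinary homotopy invariance in $GK$ does the job.

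First I would view the family $(\theta^{(t)})_{t \in [0,1]}$ as a single $G$-action $\Theta$ on $M_2(X[0,1])$, so that each evaluation $\mathrm{ev}_t : (M_2(X[0,1]),\Theta) \to (M_2(X),\theta^{(t)})$ is $G$-equivariant. Because $\theta_{11}^{(t)}$ is independent of $t$, the upper left corner of $\Theta$ restricts to $\alpha \otimes \mathrm{triv}$ on $X[0,1]$. Consequently the corner embeddings
\[
e_{11}: (X[0,1], \alpha \otimes \mathrm{triv}) \hookrightarrow (M_2(X[0,1]), \Theta),
\qquad
e_{11}^{(t)}: (X,\alpha) \hookrightarrow (M_2(X), \theta^{(t)})
\]
are both $G$-equivariant, hence invertible in $GK$ by the stability axiom, and the evaluation $\mathrm{ev}_t^{(11)} : (X[0,1], \alpha \otimes \mathrm{triv}) \to (X,\alpha)$ on the upper left corner is likewise equivariant.

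Next I would exploit the obvious commutativity $e_{11}\, \mathrm{ev}_t = \mathrm{ev}_t^{(11)}\, e_{11}^{(t)}$ at the level of $*$-homomorphisms. Inverting $e_{11}$ on the left and $e_{11}^{(t)}$ on the right in $GK$ and composing with $s$ yields
\[
s_t\, (e_{11}^{(t)})^{-1} \;=\; s\, \mathrm{ev}_t\, (e_{11}^{(t)})^{-1} \;=\; s\, e_{11}^{-1}\, \mathrm{ev}_t^{(11)}.
\]
Finally, applying homotopy invariance to $\mathrm{id}: X[0,1] \to X[0,1]$ (whose $t$-slice is precisely $\mathrm{ev}_t^{(11)}$) gives $\mathrm{ev}_0^{(11)} = \mathrm{ev}_1^{(11)}$ in $GK$, and substituting at $t = 0$ and $t = 1$ produces the required equality $s_0\, (e_{11}^{(0)})^{-1} = s_1\, (e_{11}^{(1)})^{-1}$.

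The only delicate point, and hence the main obstacle, is justifying that the upper left corner of $\Theta$ really is $\alpha \otimes \mathrm{triv}$ as a $G$-action on $X[0,1]$, rather than some exotic action whose fibres merely happen to equal $\alpha$ at each $t$. This uses that in the definition $X[0,1] = X \otimes (C_0([0,1]),\mathrm{triv})$ the $[0,1]$-parameter enters only through a trivially-acted factor, together with Lemma \ref{lemma61}.(v), which identifies $\Theta$ unambiguously from its corner data. After this normalisation, everything else is routine formal manipulation in $GK$.
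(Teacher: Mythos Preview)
Your proof is correct and follows essentially the same route as the paper: both set up the commuting square with evaluation maps, derive $s_t (e_{11}^{(t)})^{-1} = s\, e_{11}^{-1}\, \mathrm{ev}_t^{(11)}$, and finish by observing that constancy of $\theta_{11}^{(t)}$ makes $\mathrm{ev}_t^{(11)}$ the evaluation of the identity homotopy on $(X[0,1],\alpha\otimes\mathrm{triv})$. The ``delicate point'' you flag is not really an issue here, since the notation $(\theta^{(t)})_{t\in[0,1]}$ already means the action is given fibrewise over the trivially-acted interval, so the upper left corner action is $\alpha\otimes\mathrm{triv}$ by definition rather than by appeal to Lemma~\ref{lemma61}.(v).
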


\begin{proof}
Consider the diagram
$$\xymatrix{
\big (X[0,1], (\theta^{(t)}_{11})_{t \in [0,1]} \big )
\ar[rr]^{e_{11}}		\ar[d]^{\psi_t}
&&
\big ( M_2  (X[0,1]), (\theta^{(t)})_{t \in [0,1]} \big) 
\ar[d]^{\phi_t}
   	&&  A   	\ar[ll]^s		\ar[d]  	\\
\big (X, \theta^{(t)}_{11}  \big )
\ar[rr]^{e^{(t)}_{11}}
&&
\big ( M_2(X), \theta^{(t)}  \big ) 
&& A
\ar[ll]^{s_t}
}
$$
where $e_{11}$ and $e_{11}^{(t)}$ are the corner embeddings and $\phi_t$ and $\psi_t$ are the evaluation maps.
Since both rectangles of the diagram commute we get $s e_{11}^{-1} \psi_t = s_t {(e_{11}^{(t)})}^{-1}$.

For $\theta^{(t)}_{11}$ is independent of $t$, 
$\psi_t$ is
evaluation of the identity homotopy,
so $\psi_0 = \psi_t$ in $GK$.
%
\end{proof}

In lemma \ref{lemma272}.(iv) we have observed 
a $G$-equivariance condition reminiscent of $KK$-theory.
In the next lemma we are going to observe
how grading and the commutator condition
$[a,F] \in \calk(\cale)$ 
come into play:
Namely, 
if we start with a single split exact sequence, how can we construct a second split?:

\begin{lemma} 				\label{lemma812}

Let $U$ be a unitary in $M$
and $t : A \rightarrow M$ a $*$-homomorphism.
Consider a diagram 
$$\xymatrix{
B
\ar[r]^i 
& 
(M,\gamma) \square A \ar[rr]    && A 
\ar@<.5ex>[ll]^{s \square}
\ar[lld]^{U \circ t \circ U^* 
\oplus 1}		\\ 
 & (M,\delta) \oplus A &&
}$$



Then the image of $U \circ t \circ U^* \oplus 1$ is in $M \square A$ if and only if $s(a) \circ U - U \circ t(a) \in i(B)$ for all $a \in A$
if and only if
$$\Big [s(a) \oplus t(a), \left(\begin{matrix} 0 & U
\\  U^* & 0 \end{matrix} \right )   \Big ]
\in M_2(i(B))
\qquad \forall a \in A$$

\if 0
\item[(ii)]
Assume that the condition of (i) holds true.

If $M= \call_D(\cale)$, $\gamma ={\rm Ad}(S)$, $\delta= {\rm Ad}(T)$ and $T \circ U = U \circ S$
then the diagram can be completed to a double split exact sequence with $G$-action on 
$M \square A$ being $\theta^{U \square 1}$
if and only if
$$s_-(a) g(U) - U g(1) \in i(B)$$

\end{itemize}
\fi

\end{lemma}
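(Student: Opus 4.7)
The plan is to prove the three stated conditions equivalent by short computations, using two basic facts: that $i(B)$ is a two-sided ideal in $M$ (by the hypotheses of definition \ref{lemma35} that go into forming $M\square A$) and that $i(B)$ is $*$-closed.

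First I would handle the equivalence of (1) with the condition $s(a)U - U t(a) \in i(B)$. By the very definition of $M \square A$, the element $\bigl(U t(a) U^*, a\bigr)$ lies in $M \square A$ if and only if $U t(a) U^* - s(a) \in i(B)$. Multiplying on the right by the unitary $U$ and using that $i(B)$ is an ideal in $M$ turns this into $U t(a) - s(a) U \in i(B)$, i.e.\ $s(a) U - U t(a) \in i(B)$; the reverse direction is obtained by multiplying on the right by $U^*$.

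Next I would expand the commutator directly:
$$
\left[\,s(a)\oplus t(a),\;\begin{pmatrix} 0 & U \\ U^* & 0 \end{pmatrix}\right]
= \begin{pmatrix} 0 & s(a)U - U t(a) \\ t(a)U^* - U^* s(a) & 0 \end{pmatrix}.
$$
Membership in $M_2(i(B))$ is thus equivalent to the conjunction of $s(a)U - Ut(a)\in i(B)$ and $t(a)U^* - U^* s(a)\in i(B)$ for all $a\in A$. To collapse these into a single condition, I take adjoints: if $s(a)U - U t(a) \in i(B)$ holds for all $a$, then applying the involution (which preserves $i(B)$) and replacing $a$ by $a^*$ yields $U^* s(a) - t(a) U^* \in i(B)$, i.e.\ the second off-diagonal condition. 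The converse is symmetric.

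There is no real obstacle here; the entire lemma is essentially a matrix computation combined with the ideal and $*$-closure properties of $i(B)$. The only place requiring a moment of care is recording that $i(B)\subseteq M$ really is a two-sided ideal (so that multiplication by $U$, $U^*$ from either side preserves it), which is built into the $M\square A$-construction of section \ref{sec6} and hence available by hypothesis.
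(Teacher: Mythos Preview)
Your proof is correct and matches the paper's approach: the paper simply says the proof is straightforward and refers to the analogous computation in the proof of lemma \ref{lemma112}, which is precisely the commutator expansion and ideal argument you carry out here.
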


\begin{proof}
The proof is straightforward, or see the similar proof of lemma \ref{lemma112}.
\end{proof}

If the condition of the last lemma is satisified, then $U \circ t \circ U^* \square 1$ gives us a second split in the first line of the diagram.

The next and final step to complete the first line to a double split exact sequence would be a $M_2$-action.

Typically $t$ is equivariant with respect to $\gamma$ and one defines the $G$-action on
$X:=M_2(M \square A)$ by $\theta^{U \square 1}$.
To this end $X$ must be invariant under this action, and for $M=\call(\cale)$
this equivalent to the other condition of Kasparov theory, see lemma
\ref{lemma272}. 

\section{The functor $\calb$}

\label{sec9}

%
For the following definition see for example
\cite[17.6]{blackadar}.

\begin{definition}			\label{def71}
{\rm
Let $z=[s,(\cale,S),F] \in KK^G(A,B)$ be a Kasparov element.
By functional calculus we choose an operator homotopy $(s,(\cale,S),F) \sim (s,(\cale,S),F')$ such that $F'$ is self-adjoint and $\|F'\|\le 1$.
We denote the new $F'$ by $F$ again.
Set 
$$U = 
\left (\begin{matrix} F  &  (1-F^2)^{1/2}\\
 (1-F^2)^{1/2}  &  -F
\end{matrix} \right ) 
 \in \call_B \big ((\cale,S) \oplus (\cale,
S) 
\big )$$


Since $U$ is a compact perturbation of $F \oplus (-F)$, 
by adding on zero cycles to $z$ we get
$$z=
[s \oplus 0 \oplus 0,(\cale \oplus \cale \oplus \H_B, S \oplus S 
\oplus R),U \oplus 1 
] =$$
\begin{equation}		\label{secz}
z = [s \oplus 0, 
(\cale \oplus \H_B, S \oplus 
V),
U(F)
]
\end{equation}
where we have 
written
$(\cale, S 
) \oplus (\H_B,R) \cong (\H_B, 
V)$
for simplicity,
and have set
$$U(F):=U \oplus 1 \qquad \in \call_B \big((\cale,S) \oplus (\H_B, 
V ) \big) ,$$
which is a self-adjoint unitary,
but 
notice that $U(F)$ 
means still the first $U \oplus 1$ operator.

}
\end{definition}

\begin{lemma}		\label{lemma81}
\begin{itemize}

\item[(i)]
If $(s,\cale,F_t)_{t \in [0,1]} \in KK^G(A,B)$ is an operator
homotopy, then $U(F_t)_{t \in [0,1]} \in \call_B(\cale \oplus \H_B)$
is homotopy of unitaries.

\item[(ii)]
If $F$ is self-adjoint unitary
and $s:A \rightarrow \call_B(\cale)$ a non-equivariant homomorphism, then 
$$U(F) \circ \big( s 
(a) \oplus 0 \big) \circ U(F)^* = F \circ s(a) \circ F^* \oplus 0$$

\item[(iii)]
If $B=D[0,1]$ and $\phi_t:D[0,1]\rightarrow D$
is the evaluation map at $t \in [0,1]$
then
$U(F) \otimes_{\phi_t} 1 = 
U(F \otimes_{\phi_t} 1)$.
\end{itemize}

\end{lemma}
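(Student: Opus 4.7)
My plan is to verify each of (i), (ii), (iii) by a direct computation using the explicit $2\times 2$ block form of $U$ in Definition \ref{def71}, together with two standard facts about continuous functional calculus: it is norm-continuous in the self-adjoint operator variable, and it is intertwined by any $*$-homomorphism. Since $F$ is assumed self-adjoint with $\|F\|\le 1$, the expression $(1-F^2)^{1/2}$ is unambiguously defined, and $F$ commutes with it. Using this, the routine $2\times 2$ block multiplication
\begin{equation*}
U^2 = \begin{pmatrix} F^2+(1-F^2) & F(1-F^2)^{1/2}-(1-F^2)^{1/2}F \\ (1-F^2)^{1/2}F-F(1-F^2)^{1/2} & (1-F^2)+F^2 \end{pmatrix}=1,
\end{equation*}
together with $U^*=U$, shows that $U$, and hence $U(F)=U\oplus 1$, is a self-adjoint unitary. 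This is the core observation behind all three parts.

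For (i), the map $F\mapsto (1-F^2)^{1/2}$ is norm-continuous on the norm-$\le 1$ self-adjoint operators (standard Stone--Weierstrass / functional calculus continuity). Hence if $t\mapsto F_t$ is norm-continuous, so are the four entries of the defining matrix of $U$, and therefore $t\mapsto U(F_t)=U\oplus 1$ is a norm-continuous path. By the computation above each $U(F_t)$ is a self-adjoint unitary, giving the claimed homotopy of unitaries.

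For (ii), if $F$ is already a self-adjoint unitary then $F^2=1$, which forces $(1-F^2)^{1/2}=0$, so $U$ collapses to the block diagonal $F\oplus(-F)$ and $U(F)=F\oplus(-F)\oplus 1$ on $\cale\oplus\cale\oplus\mathbb{H}_B$ (using the identification $(\cale,S)\oplus(\mathbb{H}_B,R)\cong(\mathbb{H}_B,V)$ made in Definition \ref{def71}). Conjugating $s(a)\oplus 0\oplus 0$ by this block diagonal unitary immediately yields $F\circ s(a)\circ F^*\oplus 0\oplus 0$, which is the stated identity.

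For (iii), the $*$-homomorphism $-\otimes_{\phi_t}1$ intertwines continuous functional calculus, so in particular $(1-F^2)^{1/2}\otimes_{\phi_t}1=(1-(F\otimes_{\phi_t}1)^2)^{1/2}$, and it obviously commutes with the formation of $2\times 2$ block matrices and with direct sums (and sends the extra identity summand to the identity). Applying these two compatibilities entrywise to the defining matrix of $U$, together with $1\oplus 1$ on the $\mathbb{H}_B$-summand, gives the claimed $U(F)\otimes_{\phi_t}1=U(F\otimes_{\phi_t}1)$. I do not anticipate a real obstacle; the only bookkeeping subtlety is keeping straight the identification $(\cale,S)\oplus(\mathbb{H}_B,R)\cong(\mathbb{H}_B,V)$ used in part (ii), so that $s(a)\oplus 0$ is interpreted correctly relative to the three summands on which $U\oplus 1$ actually acts.
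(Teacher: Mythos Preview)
Your proof is correct and follows the same route as the paper, which simply asserts that the claims follow easily from Definition~\ref{def71}; you supply the explicit block computations the paper omits. The one additional remark the paper makes---that the preliminary normalization $F\mapsto F'$ in Definition~\ref{def71} respects homotopy---is exactly the functional-calculus continuity you invoke for part~(i), so your argument already covers it.
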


\begin{proof}
These claims follow easily from definition
\ref{def71}. Recall that the transition
from $F$ to $F'$ respects homotopy.
\end{proof}


\begin{definition}		\label{defB}
{\rm
Let
$z=[s_- + s_+, (\H_B,S), F
] \in KK^G(A,B)$ be given
where $F$ is unitary
and $S$ is of the form $\beta \oplus T$, see definition \ref{def75}.

Then we define
\begin{displaymath}			\label{bz}
\calb(z)= t_+ \mu \Delta_{t_-} e^{-1}
\end{displaymath}
that is in details, the element of $GK$ associated to this diagram
read from right to left:
\begin{equation}	 	\label{bz2}
\xymatrix{
B \ar[r]^e &
B \otimes \calk
%
\ar[r]^\kappa 
& 
\call_B( (\H_B,S)) \square A \ar[r]    & A
\ar@<.5ex>[l]^{t_\pm} 
}
\end{equation}
where
\begin{eqnarray*}
t_-(a) &=& s_-(a) \square a \\
t_+(a) &=& F \circ s_+(a) 
\circ F^* \square a
\end{eqnarray*}

where
the $G$-action on $M_2( \call_B( (\H_B,S)) \square A)$ is
$\theta^{F \square 1}$.  
The letter $e$ denotes the equivariant corner embedding.

}
\end{definition}

Here, $\theta^{F \square 1}$ 
is an incorrect but suggestive notation
for $\theta^F \square (\alpha \otimes 1_{M_2})$.

We note that the action $\theta^{F \square 1}$
is just ${\rm Ad}(S \oplus T) \square (\alpha \otimes 1)$, where the $G$-action $T$ is defined in such a way that the unitary $F$ becomes equivariant, that is, $T \circ F = F \circ S$.

\if 0
We also note that the notation $\theta^{F \square 1}$ is just an incorrect but suggestive notation
for $\theta^F \square (\alpha \otimes 1_{M_2})$.

 for $\theta^{F \oplus 1}$ 
restricted to the $M_2$-space of the above diagram.
 \fi

\begin{lemma}		\label{lemma112}
The assignment $\calb$ is well defined.
\end{lemma}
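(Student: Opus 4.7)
The plan is to verify well-definedness in two stages: first, check that the diagram in (\ref{bz2}) really is a double split exact sequence in the sense of Definition \ref{def31}, so that the expression $t_+ \mu \Delta_{t_-} e^{-1}$ makes sense; second, check that this element of $GK$ does not depend on the choices made in passing from a general representative of $z$ to the standard form of Definition \ref{defB}.

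For the first stage I would verify three points in turn. \emph{(i)} The maps $t_\pm$ land in $\call_B(\H_B) \square A$. For $t_-$ this is tautological. For $t_+$ one needs $F \circ s_+(a) \circ F^* - s_-(a) \in \calk_B(\H_B)$; this reduces, via the off-diagonal form of $U(F)$ built in Definition \ref{def71} and the graded decomposition $s = s_- + s_+$, to the Kasparov commutator relations $[F,s(a)] \in \calk$ and $s(a)(F^2 - 1) \in \calk$. \emph{(ii)} Equivariance of $t_\pm$: for $t_-$ this is equivariance of $s_-$, while for $t_+$ it follows from $T \circ F = F \circ S$, which by construction holds for the chosen $T$. \emph{(iii)} The $M_2$-action $\theta^{F \square 1}$ preserves $M_2(\call_B(\H_B) \square A)$. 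By lemma \ref{lemma272}, and in particular the characterization in lemma \ref{lemma272}.(iv), this is equivalent to $s_-(a)(g(F) - F g(1)) \in \calk_B(\H_B)$ for all $g \in G$ and $a \in A$, which is exactly the Kasparov equivariance condition on $F$ for the class $z$.

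For the second stage, one checks independence under the choices of Definition \ref{def71}: the homotopy to a self-adjoint contraction, the adjunction of degenerate cycles, and the identification $(\cale, S) \oplus (\H_B, R) \cong (\H_B, V)$ of Kasparov's stabilization. Operator homotopy invariance is handled by lemma \ref{lemma91} applied to the path of unitaries $U(F_t)$ furnished by lemma \ref{lemma81}.(i); here one uses that the diagram (\ref{bz2}) fits the shape needed to invoke lemma \ref{lemma91}. Adding on a degenerate cycle produces a summand in which $s_-$ and $s_+$ are conjugate by an equivariant unitary, making the corresponding contribution a trivial double split exact sequence and hence zero in $GK$ via remark \ref{rem12}. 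Invariance under the stabilization isomorphism follows from lemma \ref{lemma86} combined with the functoriality lemma \ref{lemma21}. The main obstacle is item (iii) above: the existence of $\theta^{F \square 1}$ is precisely the Kasparov equivariance condition in disguise, and rests on translating that condition through lemma \ref{lemma272}; everything else is bookkeeping using the framework built in sections \ref{sec5}--\ref{sec8}.
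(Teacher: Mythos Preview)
Your Stage~1 matches the paper almost exactly: the paper checks the image of $t_+$ via the off-diagonal block form of $F$ and the commutator condition, and verifies the $M_2$-action $\theta^{F\square 1}$ through lemma~\ref{lemma272}.(iii)--(iv), just as you do. (The paper also notes, which you omit, that two choices of corner embedding $e$ coincide in $GK$ by a rotation homotopy.)

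In Stage~2 the paper takes a different and more economical route. Rather than treating operator homotopy, addition of degenerates, and stabilization separately, it packages any equivalence of cycles as a single homotopy $z\in KK^G(A,B[0,1])$ with unitary endpoint operators, rewrites it in the form~(\ref{secz}), applies definition~\ref{defB} to obtain a diagram over $B[0,1]$, and then invokes lemma~\ref{lemma91} together with lemma~\ref{lemma81}.(ii)--(iii) to conclude $\calb(z_0)=\calb(z_1)$. This one argument absorbs all the cases you enumerate.

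Your invocation of lemma~\ref{lemma91} for operator homotopy is not quite right as stated: lemma~\ref{lemma91} takes as input a diagram whose first line lives over $B[0,1]$, not a path of unitaries over $B$. To use it you must first repackage the operator homotopy $(s,\cale,F_t)$ as a cycle over $B[0,1]$---which is precisely the paper's maneuver---or else invoke lemma~\ref{lemma123} directly (as is done later in lemma~\ref{lemma124}). Once that is done your decomposition works, but at that point you have essentially reproduced the paper's single homotopy argument piecewise, so the separate treatment of degenerates and stabilization via lemmas~\ref{lemma86} and~\ref{lemma21} becomes redundant.
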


\begin{proof}
{\em 
The image of $t_+$.}
Since $F$ is odd graded, $F=\left (\begin{matrix} 0 & V \\ V^* & 0 \end{matrix} \right )$
for a unitary $V$ acting on the Hilbert module $\H_B \cong \cale_- \oplus \cale_+$ 
(graded parts).
Let us also write $s_-(a)= v_-(a) \oplus 0$
and $s_+(a)= 0 \oplus v_+(a)$ 
acting on $\cale_- \oplus \cale_+$.

Because $[s_-(a) + s_+(a),F]$ is a compact operator in Kasparov theory, we get that
$$[v_-(a) \oplus v_+(a), F]  F^*
= (v_-(a) \oplus v_+(a) )   F  F^*-
F   (v_-(a) \oplus v_+(a) ) F^*$$
$$
= v_-(a) \oplus v_+(a) - V v_+(a) V^* \oplus V^* v_-(a) V$$
is in $\calk_B(\H_B)$, so the first coordinate
$v_-(a)  - V v_+(a) V^*$ is in $\calk_B(\cale_-)$. But this means
$s_-(a)  - F s_+(a) F^* \in \calk_B(\H_B) 
$.
Hence 
$$ 
t_+(a) \in t_-(A) + \kappa(B \otimes \calk) = \call_B(\H_B) \square A$$


{\em $M_2$-action.}
Define 
a $G$-action $T$ on $\H_B$
by $T_g \circ F = F \circ S_g$ .
By lemma \ref{lemma272}.(iii)-(iv),
the $G$-action $\theta^{F \square 1}$
is valid.

{\em Corner embedding.}
If we had two choices of corner embeddings $e$ then they 
would be the same in $GK$ by a rotation homotopy.

{\em Homotopy invariance.}

Let
$z=[s_- 
+ s_+, (\H_{B[0,1]}, S)
,F] \in KK^G(A,B[0,1])$ be 
a (operator) homotopy where the end point operators $F_0,F_1$ are unitary.
We rewrite $z$ in the form (\ref{secz})
and go with it into definition \ref{defB}.
%
Then applying the diagram (\ref{bz2}) 
associated to $\calb(z)$ to 
lemma \ref{lemma91}
we see by lemma \ref{lemma81}.(ii)-(iii) that $\calb(z_0)=\calb(z_1)$
for the evaluations $z_t$ of $z$ at time $t \in [0,1]$.
\end{proof}


\begin{lemma}			\label{lemma122}
Let
$z=[s_- 
+ s_+, (\cale,S)
,F] \in KK^G(A,B)$ be given.

Then 
$\calb(z)= t_+ \mu \Delta_{t_-} e^{-1}$
for the diagram
$$\xymatrix{
B \ar[r]^e &
B \otimes \calk
%
\ar[r]^\kappa 
& 
\call_B( 
\cale \oplus \H_B) \square A \ar[r]    & A
\ar@<.5ex>[l]^{t_\pm} 
}$$
where
$\cale \oplus \H_B$ is equipped with 
a $G$-action
$S \oplus R$ and
the $G$-action on $M_2( \call_B( \cale \oplus \H_B) \square A)$ is defined to be
$\theta^{U(F) \square 1}$  
%
and
\begin{eqnarray*}
t_-(a) &=& s_-(a) \oplus 0\square a \\
t_+(a) &=& U(F) \circ (s_+(a) \oplus 0
) \circ U(F)^* \square a
\end{eqnarray*}
\end{lemma}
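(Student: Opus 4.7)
The plan is to reduce the claim to Definition \ref{defB} by Kasparov absorption. Following Definition \ref{def71}, I would first homotope $F$ by functional calculus to be self-adjoint with $\|F\|\le 1$, form the self-adjoint unitary $U$ on $(\cale,S)\oplus(\cale,S)$, and add degenerate cycles to rewrite $z$ in the form $[(s_-+s_+)\oplus 0,\,\cale\oplus\H_B,\,U(F)]$, with $U(F)=U\oplus 1$ after the usual identification $(\cale,S)\oplus(\H_B,R)\cong(\H_B,V)$. Concretely I would invoke Lemma \ref{lemma86} to produce a $G$-equivariant Hilbert $(B,\beta)$-module isomorphism $Y:(\cale\oplus\H_B,\,S\oplus R)\to(\H_B,\,\beta\oplus V)$, so that conjugating by $Y$ puts $z$ into the shape demanded by Definition \ref{defB}; the resulting $GK$-morphism is by definition $\calb(z)$.

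Next I would verify that the diagram of Lemma \ref{lemma122} itself is a bona fide double split exact sequence on $\cale\oplus\H_B$. Two points need care. First, $t_+(a)-t_-(a)$ must lie in $\calk_B(\cale\oplus\H_B)$ so that $t_+$ indeed lands in $\call_B(\cale\oplus\H_B)\square A$: this is the compactness of $U(F)(s_+(a)\oplus 0)U(F)^*-(s_-(a)\oplus 0)$, and follows by the same grading/commutator computation as in the proof of Lemma \ref{lemma112}, since the degenerate $\H_B$-summand contributes only the identity and the $U(F)$-construction is a compact perturbation of the naive odd-graded extension of $F$. Second, the $M_2$-action $\theta^{U(F)\square 1}$ must leave $M_2(\call_B(\cale\oplus\H_B)\square A)$ invariant; by Lemma \ref{lemma272}.(iv) this reduces to $s_-(a)\bigl(g(U(F))-U(F)g(1)\bigr)\in\calk_B(\cale\oplus\H_B)$ for all $g\in G$ and $a\in A$, which is a direct consequence of the Kasparov $G$-continuity condition for $F$ (the ancillary summands are equivariant, so they contribute nothing).

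Finally, the isomorphism $Y$ induces equivariant isomorphisms ${\rm Ad}(Y):\call_B(\cale\oplus\H_B)\to\call_B(\H_B)$ and ${\rm Ad}(Y)\otimes 1_{M_2}$ between the matrix algebras, intertwining $\theta^{U(F)\square 1}$ with $\theta^{F'\square 1}$ for $F':=YU(F)Y^{-1}$, and sending the diagram of Lemma \ref{lemma122} to the diagram on $\H_B$ that Definition \ref{defB} assigns to the rewritten $z$. Applying Lemma \ref{lemma21} with $\Phi={\rm Ad}(Y)\otimes 1_{M_2}$ and $a=b=1$ then concludes that the two double split exact sequences represent the same morphism in $GK$, which is $\calb(z)$. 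The main obstacle is the second point of the middle paragraph: pushing the Kasparov continuity of $F$ through the construction $U(F)$ so that $\theta^{U(F)\square 1}$ is well-defined, just as in the homonymous verification in Lemma \ref{lemma112}.
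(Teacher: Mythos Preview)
Your proposal is correct and follows essentially the same route as the paper's one-line proof (``bring $z$ to the form (\ref{secz}) and apply definition \ref{defB}''), just with the identification $Y:(\cale\oplus\H_B,S\oplus R)\cong(\H_B,\beta\oplus V)$ from Lemma~\ref{lemma86} and the comparison via Lemma~\ref{lemma21} made explicit. Your middle paragraph, re-verifying that the diagram on $\cale\oplus\H_B$ is a well-defined double split exact sequence, is redundant: once $Y$ is in hand, this well-definedness is inherited from the diagram on $\H_B$, which is already covered by Lemma~\ref{lemma112}.
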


\begin{proof}
We bring $z$ to the form (\ref{secz}) 
and apply definition \ref{defB}.
\end{proof}

\begin{lemma}		\label{lemma124}
Consider lemma \ref{lemma122}
where $z=[s_- + s_+, (\cale,S)
,(F_m)_{m \in [0,1]}]$
is an operator homotopy.
Then $t_+^{(0)} \mu = t_+^{(1)} \mu$
in $GK$
for
$$t_+^{(m)}(a) = U(F_m) \circ (s_+(a) \oplus 0
) \circ U(F_m)^* \square a
$$
\end{lemma}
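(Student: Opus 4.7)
The plan is to reduce the statement to an application of Lemma \ref{lemma123}. Since $(F_m)_{m\in[0,1]}$ is an operator homotopy, Lemma \ref{lemma81}(i) shows that $(U(F_m))_{m\in[0,1]}$ is a norm-continuous family of self-adjoint unitaries in $\call_B(\cale \oplus \H_B)$. Consequently the recipe $m \mapsto \theta^{U(F_m) \square 1}$ gives a continuous family of $G$-actions on $X := \call_B(\cale \oplus \H_B) \square A$, which together assemble into a single $G$-action on $M_2(X[0,1])$. The associated $M_2$-action at time $m$ is $\theta^{(m)} := \theta^{U(F_m) \square 1}$; I will write $e_{11}^{(m)}, e_{22}^{(m)}$ and $\mu^{(m)} = e_{22}^{(m)}(e_{11}^{(m)})^{-1}$ for the corresponding corner data.

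The first key observation is that the upper left corner action $\theta^{(m)}_{11}$ is independent of $m$. Unpacking $\theta^U_g = \kappa_U \circ (\alpha_g \otimes 1_{M_2}) \circ \kappa_U^{-1}$ on a $2 \times 2$ matrix shows that the $(1,1)$-entry is always just $\alpha_g(x_{11})$; hence the $(1,1)$-corner of $\theta^{U(F_m) \square 1}$ is the fixed $G$-action $\mathrm{Ad}(S \oplus R) \square (\alpha \otimes 1)$ on $X$ and does not see $F_m$.

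The second step is to package the family of maps $a \mapsto e_{22}(t_+^{(m)}(a))$ into a single equivariant $*$-homomorphism
\[
s: A \longrightarrow \bigl(M_2(X[0,1]), (\theta^{(m)})_{m\in[0,1]}\bigr), \qquad s(a)(m) = e_{22}\bigl(t_+^{(m)}(a)\bigr),
\]
landing at each time $m$ in the lower right corner. Continuity of $s$ follows from the continuity of $m \mapsto U(F_m)$, and fibrewise equivariance follows because, by construction of $\theta^{U(F_m)\square 1}$, the $G$-action on the lower right corner is obtained by conjugating $S \oplus R$ (acting by $\mathrm{Ad}$) with $U(F_m)$, which exactly matches the $U(F_m) (s_+(a)\oplus 0) U(F_m)^*$ in $t_+^{(m)}(a)$.

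With these pieces in place, Lemma \ref{lemma123} applies directly and yields $s_0\,(e_{11}^{(0)})^{-1} = s_1\,(e_{11}^{(1)})^{-1}$ in $GK$. Since $s_m = e_{22}^{(m)} \circ t_+^{(m)}$, the left hand side equals $t_+^{(m)} \circ e_{22}^{(m)} \circ (e_{11}^{(m)})^{-1} = t_+^{(m)} \mu^{(m)}$, so $t_+^{(0)} \mu = t_+^{(1)} \mu$ follows. The main obstacle I anticipate is purely bookkeeping: verifying that the lower-right-corner equivariance of $s$ is clean (this reduces to the definition of $T_m := U(F_m) \circ S \circ U(F_m)^*$ that makes $U(F_m)$ equivariant, as used in Lemma \ref{lemma112}), and confirming that the $(1,1)$-corner action of $\theta^{U \square 1}$ really is independent of $U$—both of which are direct from the $M_2$-action formulas in Section \ref{sec7}.
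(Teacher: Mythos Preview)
Your proposal is correct and follows essentially the same approach as the paper: set $X=\call_B(\cale\oplus\H_B)\square A$, assemble the family $(\theta^{U(F_m)\square 1})_{m\in[0,1]}$ into a $G$-action on $M_2(X[0,1])$, and apply Lemma~\ref{lemma123}. The paper's proof is a two-line sketch of exactly this argument; your version simply fills in the details (continuity via Lemma~\ref{lemma81}(i), independence of the $(1,1)$-corner, and the construction of $s$) that the paper leaves implicit.
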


\begin{proof}
Set $X= \call_B( \cale \oplus \H_B) \square A$.
We consider $M_2(X[0,1],(\theta^{F(U_m)\square 1})_{m \in [0,1]})$.

Now the claim follows by lemma \ref{lemma123}.
\end{proof}



\if 0

Note that in the last definition we just brought
$z$ to the format (\ref{secz}) and then used this definition:

\begin{lemma}
If $z=[s_- \oplus s_+, \H_B,F] \in KK^G(A,B)$
with $F$ unitary then $\calb(z)= t_+ \mu \Delta_{t_-} e^{-1}$ for the diagram
$$\xymatrix{
B \ar[r]^e &
B \otimes \calk
%
\ar[r]^\kappa 
& 
\call_B( \H_B) \square A \ar[r]    & A
\ar@<.5ex>[l]^{t_\pm} 
}$$
with $t_-=s_-$ and $t_+=F \circ s_+(a) \circ F^*$.
\end{lemma}
\fi

\if 0

\begin{remark}
If $F$ is unitary one could also set
$\calb(z)$ the element of $GK$ associated to
$$\xymatrix{
B \ar[r]^e  &
B \otimes \calk  &
\calk_B(\cale)		\ar[l]_h
%
\ar[r] 
& 
\call_B( 
\cale) \square A \ar[r]    & A
\ar@<.5ex>[l]^{t_\pm} 
}$$
with $t_-=s_-$ and $t_+(a) = F \circ s_+(a) \circ  F^* \square a$.
The same by lemma \ref{lemma83}.
\end{remark}

\section{}

corollary of lemma \ref{lemma21}

\begin{corollary}
If diagram double split mit $E$ gegeben, then
$$\alpha(diagram)= [s_+ \oplus s_- ,  (B,S)\oplus (B,T),F]$$

\end{corollary}

\begin{proof}
beide ausdrücke wurden im beweis von lemma \ref{lemma21}
gezeigt gleich zu sein
\end{proof}

\fi

\section{
Connection between $\cala$ and $\calb$}

\label{sec10}

\begin{definition}			\label{def161}
{\rm
Let $(B \otimes \calk,\gamma)$ be a $G$-algebra.
Let $E = B \otimes e_{ii}\cong (B,\beta) \subseteq B \otimes \calk$
be a $G$-invariant corner algebra.
Then 
$((B \otimes \calk) E,\gamma)$ is a $G$-Hilbert $(B,\beta)$-module 
with all operations inherited from 
the $G$-algebra 
$B \otimes \calk$.
%
%
%
We define
$$r 
:\big (\call_{B \otimes \calk}
(B \otimes \calk), {\rm Ad}(\gamma) \big )
 \rightarrow 
\big ( \call_B(\H_B), {\rm Ad} (Y \circ \gamma
\circ Y^{-1} ) \big ) : 
r 
(T) (\xi) = Y(T Y^{-1}(\xi))$$
($T$ acts here by multiplication) 
where the $G$-Hilbert $B$-module isomorphism
$$Y: 
(B \otimes \calk) E
\rightarrow
\H_B 
$$
is the canonical map by regarding
elements of the domain of $Y$ as column vectors.
The $G$-action on $\H_B$ is
$Y \circ \gamma
\circ Y^{-1}$.
}
\end{definition}

\begin{lemma}			\label{lemma162}
$r$ and
$$
\zeta :\call_B(\H_B) \rightarrow \call_{B \otimes \calk}(B \otimes \calk) :\zeta(T)(x)= \kappa^{-1}(T \kappa(x))$$
($T$ acts by multiplication)
are inverse equivariant $*$-homomorphisms to each other.


\end{lemma}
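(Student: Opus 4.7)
The plan is to reduce both $r$ and $\zeta$ to column-vector manipulations that identify them as the canonical extensions of $\kappa$ and $\kappa^{-1}$ to multiplier algebras. The key computational step is the intertwining identity
\[
Y(b \cdot y) = \kappa(b)\bigl(Y(y)\bigr) \qquad \text{for } b \in B \otimes \calk,\ y \in (B \otimes \calk)E.
\]
This is immediate from writing $y$ as a matrix whose only nonzero column is indexed by the $G$-invariant corner and $b$ as an arbitrary matrix: ordinary matrix multiplication $by$ produces a single nonzero column whose entries agree with $\kappa(b)$ applied to the column $Y(y)$.

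With this in hand, the $*$-homomorphism properties follow. For $r$, multiplicativity $r(ST) = r(S) \circ r(T)$ is immediate from the definition, and $r(T^*) = r(T)^*$ holds because $Y$ preserves the right $B$-valued inner products---indeed the inner product $\langle x, x' \rangle = x^* x'$ on the corner $(B \otimes \calk)E$ matches the standard one on $\H_B$ via the same column computation. For $\zeta$, the formula is well-defined since $\kappa(x) \in \calk_B(\H_B)$, whence $T \circ \kappa(x) \in \calk_B(\H_B)$ lies in the image of $\kappa$; moreover $\zeta(T)$ defines a left multiplier of $B \otimes \calk$ with adjoint $\zeta(T^*)$, hence an element of $\call_{B \otimes \calk}(B \otimes \calk)$.

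Next I would verify the inverse relations $\zeta \circ r = {\rm id}$ and $r \circ \zeta = {\rm id}$. For the latter, fix $T \in \call_B(\H_B)$ and $\xi \in \H_B$; set $y := Y^{-1}(\xi) \in (B \otimes \calk)E$. Since $y = y \cdot E$, the element $\kappa^{-1}(T \kappa(y))$ again sits in the corner column, and therefore
\[
(r \circ \zeta)(T)(\xi) = Y\bigl(\kappa^{-1}(T \kappa(y))\bigr).
\]
A direct column calculation shows that $\kappa(y)$ acts as $\kappa(y)(\eta) = Y(y) \cdot \eta_1$ (right $B$-action on $\H_B$ through the first coordinate of $\eta$); applying the $B$-linear $T$ yields $T\kappa(y)(\eta) = T(Y(y)) \cdot \eta_1$, so $\kappa^{-1}(T \kappa(y))$ is the matrix with entries $T(Y(y))_i$ in position $(i,1)$ and zero elsewhere. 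Applying $Y$ recovers $T(\xi)$; the identity $\zeta \circ r = {\rm id}$ is analogous.

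Finally, equivariance of $r$ is a one-line conjugation argument, using that the $G$-action on $\H_B$ is by definition $Y \circ \gamma \circ Y^{-1}$:
\[
r({\rm Ad}(\gamma)_g(T)) = Y \gamma_g T \gamma_{g^{-1}} Y^{-1} = {\rm Ad}(Y \gamma Y^{-1})_g\bigl(r(T)\bigr).
\]
Equivariance of $\zeta$ then follows formally from that of $r$ together with the inverse relations. The main (modest) obstacle is keeping the three identifications straight---$B \otimes \calk$ as an algebra, acting by left multiplication on its corner $(B \otimes \calk)E$, and sitting as compact operators on $\H_B$ via $\kappa$; once the intertwining identity is secured, the rest is routine bookkeeping.
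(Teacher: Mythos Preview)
The paper states this lemma without proof, treating it as a routine verification; your proposal correctly supplies the details. Your intertwining identity $Y(b\cdot y)=\kappa(b)(Y(y))$ is exactly the right organizing principle, and the column-vector computations you outline for $r\circ\zeta={\rm id}$ and equivariance are accurate. One small remark: for $\zeta\circ r={\rm id}$ you say ``analogous,'' and it is, but it is worth noting explicitly that this uses the intertwining identity in the form $Y^{-1}(\kappa(x)\eta)=x\cdot Y^{-1}(\eta)$ together with the fact that a multiplier of $B\otimes\calk$ is determined by its restriction to the corner $(B\otimes\calk)E$ (equivalently, by its action on each column).
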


\begin{lemma}
Let $\theta$ be as in lemma
\ref{lemma61} and $\mu_\theta = e_{22} e_{11}^{-1}: A \rightarrow A$ 
%
%

Then $\cala(\mu_\theta)= [{\rm id}_A, (A,\gamma), 0]$.
\end{lemma}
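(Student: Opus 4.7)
The plan is to compute $\cala(\mu_\theta)$ directly as a Kasparov product and identify the resulting cycle. Since $\cala$ is a functor and $\mu_\theta = e_{22} e_{11}^{-1}$, we have $\cala(\mu_\theta) = \cala(e_{22}) \otimes_{M_2(A)} \cala(e_{11}^{-1})$. By Definition \ref{def21}, the first factor is $[e_{22}, (M_2(A),\theta), 0]$, where $(A,\delta)$ is represented in the multipliers of $M_2(A)$ via $e_{22}$, and the second is $[{\rm id}, (M_2(A) E_{11},\theta), 0]$, with $E_{11} = e_{11}(A)$ and $M_2(A) E_{11}$ the first column of $M_2(A)$ viewed as a $G$-equivariant Hilbert $(A,\alpha)$-module.

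Since both Kasparov cycles carry the zero operator, their product collapses: the internal tensor $M_2(A) \otimes_{M_2(A)} M_2(A) E_{11}$ is canonically $M_2(A) E_{11}$, and the tensored representation becomes $a \mapsto e_{22}(a) = \mbox{diag}(0,a)$ acting by left matrix multiplication on first-column vectors. Hence $\cala(\mu_\theta) = [a \mapsto e_{22}(a), (M_2(A) E_{11}, \theta), 0]$.

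It remains to identify this cycle. The first column splits as an orthogonal $G$-equivariant direct sum of Hilbert $(A,\alpha)$-submodules $E^1 = \{(x,0)^T : x \in A\}$ and $E^2 = \{(0,y)^T : y \in A\}$. Reading off the restricted $G$-actions from the formula for $\theta$ in Lemma \ref{lemma61}, we obtain $E^1 \cong (A,\alpha)$ and $E^2 \cong (A,\gamma)$, the latter being a Hilbert $(A,\alpha)$-module precisely by Lemma \ref{lemma61}(ii). Left multiplication by $\mbox{diag}(0,a)$ annihilates $E^1$ and restricts on $E^2$ to $y \mapsto ay$, that is, the canonical embedding $(A,\delta) \rightarrow \call_{(A,\alpha)}((A,\gamma))$ denoted sloppily by ${\rm id}$ in section \ref{sec2}. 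The summand on $E^1$ is a degenerate cycle (zero representation and zero operator, so all Kasparov conditions hold with equality) and drops out in $KK^G$, leaving exactly $[{\rm id}, (A,\gamma), 0]$ as claimed.

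There is no deep obstacle; the only real care needed is in the bookkeeping of the two restricted $G$-actions on the splitting $M_2(A) E_{11} = E^1 \oplus E^2$, but these are completely dictated by the block form of $\theta$ in Lemma \ref{lemma61} and the equivariance of the canonical embedding of $e_{22}(A)$ into the lower-right corner.
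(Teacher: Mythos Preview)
Your proof is correct and follows essentially the same line as the paper's: both compute $\cala(e_{22})\cdot\cala(e_{11}^{-1})$ as a Kasparov product with zero operator and identify the resulting module with the $(2,1)$-corner $(A,\gamma)$. The only cosmetic difference is that the paper immediately replaces $\cala(e_{22})$ by the cycle on the second-row module $A_{22}\,M_2(A)$, so that the tensor product $A_{22}\,M_2(A)\otimes_{M_2(A)}M_2(A)\,A_{11}\cong (A,\gamma)$ comes out in one step, whereas you keep the full module $M_2(A)$ for $\cala(e_{22})$ and discard the degenerate summand $E^1$ only at the end.
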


\begin{proof}
Let $A_{11}$ and $A_{22}$ be the corner $G$-subalgebras of $M_2(A)$.
Then 
\begin{eqnarray*}
\cala(\mu_\theta) &=& \cala(e_{22}) \cala(e_{11}^{-1})  = [(A_{22} A,\theta),0]
\cdot [ (A A_{11},\theta),0] 	\\ 
&= & [(A_{22} A,\theta) \otimes_A
(A A_{11},\theta),0] = [(A,\gamma),0]
\end{eqnarray*}

\end{proof}

\begin{lemma}		\label{lemma121}
Let the right part of the first line of the following diagram
be double split:
$$\xymatrix{
B \ar[r]^e  \ar[d] &
B \otimes \calk \ar[r]^j  \ar[d]^\psi  &
\ar[r]^f 
\ar[d]^\phi 
X & A \ar@<.5ex>[l]^{u,h} \ar[d] 
\\
B \ar[r]  &
B \otimes \calk  
\ar[r]^\kappa 
& 
\call_B(\H_B^2) \square A \ar[r]    & A 
\ar@<.5ex>[l]^{s_\pm} 
}$$

Then
$$
\cala(h \mu \Delta_{u} e^{-1})
=$$
$$[u \chi r \oplus h  \chi r,  (\H_B,Y \circ j^{-1} \circ \gamma \circ j \circ Y^{-1}) \oplus 
(\H_B,Y \circ  j^{-1} \circ \Gamma \circ j \circ Y^{-1}) , F ]$$

where $F$ is the flip, $\chi$ is like (\ref{chi}), 
$r$ and $Y$ are from definition \ref{def161},
and $\left ( \begin{matrix} \gamma & \Gamma' \\
\Gamma &  \delta \end{matrix}\right )$
is the $G$-action on $M_2(X)$.


\end{lemma}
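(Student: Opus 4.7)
The plan is to apply $\cala$ term-by-term to the word $h \mu \Delta_u e^{-1}$ and compute the resulting four Kasparov products, finally transporting everything to $\H_B$ using $Y$ and $r$. By Definition \ref{def21} and the preceding lemma on $\mu_\theta$ (translated to the present notation, in which $\Gamma$ is the lower-left component of $\theta$), the four factors are
\begin{align*}
\cala(h) &= [h, (X,\delta), 0], \\
\cala(\mu) &= [{\rm id}_X, (X, \Gamma), 0], \\
\cala(\Delta_u) &= [fu\chi \oplus \chi, (B \otimes \calk \oplus B \otimes \calk, j^{-1}\gamma j \oplus j^{-1}\gamma j), F], \\
\cala(e^{-1}) &= [{\rm id}, ((B \otimes \calk) E, j^{-1}\gamma j), 0].
\end{align*}

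I would first form the two outer Kasparov products. On the right, $(B \otimes \calk) \otimes_{B \otimes \calk} (B \otimes \calk) E \cong (B \otimes \calk) E$ as Hilbert $B$-modules; applying the equivariant module isomorphism $Y$ and the induced $*$-isomorphism $r$ of adjointable operators (Definition \ref{def161}, Lemma \ref{lemma162}) yields $\cala(\Delta_u e^{-1}) = [fu\chi r \oplus \chi r, (\H_B, Y j^{-1}\gamma j Y^{-1}) \oplus (\H_B, Y j^{-1}\gamma j Y^{-1}), F]$. On the left, the internal tensor $(X, \delta) \otimes_{X} (X, \Gamma) \cong (X, \Gamma)$ as Hilbert $(X, \gamma)$-module with the left $A$-action supplied by $h$; thus $\cala(h \mu) = [h, (X, \Gamma), 0]$.

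The central step is the final Kasparov product $\cala(h \mu) \cdot \cala(\Delta_u e^{-1})$, which I would analyze summand by summand. For the first (odd) summand, the representation $fu\chi r$ factors through the equivariant quotient map $f:(X, \delta) \to (A, \alpha)$ of corollary \ref{cor79}; using this together with the imprimitivity $((X,\delta),(X,\gamma))$-bimodule structure of $(X, \Gamma)$ from Lemma \ref{lemma61}(ii), the balanced tensor $(X, \Gamma) \otimes_{X_\gamma, fu\chi r} (\H_B, Y j^{-1}\gamma j Y^{-1})$ identifies equivariantly with $(\H_B, Y j^{-1}\gamma j Y^{-1})$, and composing on the left with $h$ collapses $h f u\chi r$ to $u\chi r$ since $f h = {\rm id}_A$. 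For the second (even) summand, $\chi r$ is itself equivariant from $(X, \gamma)$; using the bimodule identity $\Gamma_g(\xi \cdot j(v)) = \Gamma_g(\xi)\gamma_g(j(v))$ from Lemma \ref{lemma61}(i), the balanced tensor $(X,\Gamma) \otimes_{X_\gamma, \chi r} (\H_B, Y j^{-1}\gamma j Y^{-1})$ transports through the iso $\xi \otimes v \mapsto \chi r(\xi) v$ to $(\H_B, Y j^{-1}\Gamma j Y^{-1})$, and composition with $h$ yields $h\chi r$. The flip operator $F$ on the direct sum transports to the flip, producing the claimed cycle.

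The main obstacle will be the careful tracking of the $G$-actions in the last step, in particular seeing that the imprimitivity bimodule tensor with $(X, \Gamma)$ turns the action $j^{-1}\gamma j$ into $j^{-1}\Gamma j$ on precisely the $\chi r$-summand, while leaving the $fu\chi r$-summand unaffected, since there the left $X_\gamma$-action factors through the equivariant quotient $f: X_\delta \to A$ on which the off-diagonal $\Gamma$-piece of $\theta$ does not see.
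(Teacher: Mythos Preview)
Your approach is essentially the paper's: apply $\cala$ factor by factor, form the Kasparov products, and identify the resulting Hilbert modules together with their $G$-actions. The only difference is the grouping of the products. The paper computes $\cala(h)\cala(\mu)\cala(\Delta_u)$ in one sweep as
\[
h^*\bigl([{\rm id},(X,\Gamma),0]\cdot[fu\chi\oplus\chi,(I\oplus I,\gamma\oplus\gamma),F]\bigr),
\]
realises this as the iterated internal tensor $A\otimes_h X\otimes_{fu\chi/\chi} I$, and then collapses each summand via two explicit Hilbert $I$-module isomorphisms
\[
V(a\otimes x\otimes i)=a\otimes xi,\qquad W(a\otimes x\otimes i)=a\otimes u(f(x))\,i,
\]
citing Skandalis' associativity of $F$-connections to handle the operator; only afterwards is $\cala(e^{-1})$ applied and the result transported through $Y$ and $r$. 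You instead form $\cala(\Delta_u e^{-1})$ and $\cala(h\mu)$ first and then take the remaining product directly on $\H_B$. By associativity of the Kasparov product this is equivalent, and your identifications of the two summands are precisely the $V$- and $W$-isomorphisms in disguise: the even summand picks up the $\Gamma$-action via the $((X,\delta),(X,\gamma))$-bimodule structure of $(X,\Gamma)$, and the odd summand retains the $\gamma$-action because the representation factors through $f$.

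One small imprecision: you invoke ``the equivariant quotient map $f:(X,\delta)\to(A,\alpha)$'', but what is actually needed for the odd summand is that $f$ intertwines the lower-left corner action $\Gamma$ on $X$ with $\alpha$ on $A$, i.e.\ $f(\Gamma_g(x))=\alpha_g(f(x))$; this is what makes the $W$-type identification $G$-equivariant. That follows from Corollary~\ref{cor79} once one knows the quotient action on $M_2(A)$ is $\alpha\otimes 1$ in the lower-left corner. The paper handles this at the same level of detail.
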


\begin{proof}
Set $I= B \otimes \calk$.
For simpler notation we assume that $I$ is embedded in $X$ in the diagram.
The $G$-action on $M_2(I)$ is denoted in the same way as 
the one on $M_2(X)$.
We have two isomorphisms
%
$$
V:(A \otimes_h X \otimes_\chi I,
\alpha \otimes \Gamma \otimes \gamma)
\rightarrow
(A \otimes_{h \chi} I,
\alpha \otimes \Gamma) 
: V(a \otimes x \otimes i)= a \otimes x i  \\
$$
$$
W:(A \otimes_h X \otimes_{f u \chi} I,
\alpha \otimes \gamma \otimes \gamma)
\rightarrow
(A \otimes_{u \chi} I,
\alpha \otimes \gamma) 
: W(a \otimes x \otimes i)= 
a \otimes u(f(x)) i
$$
of $G$-Hilbert $I$-modules.

%

In the following computation $F$ stands always for a flip operator (on possibly different spaces). Notice that
$0 \# F$ is a just a $F$-connection.
Recall from \cite[prop. 9.(f)]{skandalis}  the associativity of $F$-connections.
We compute
$$
\cala (h \mu \Delta_u)= 
\cala(h) 
\cala(\mu) 
\cala(\Delta_u)=
h^*([\mbox{id},(X,\Gamma),0] \cdot [fu \chi \oplus \chi,(I \oplus I , \gamma \oplus \gamma),F])
$$
$$=
[\mbox{id} \oplus \mbox{id}, A \otimes_h X \otimes_{fu \chi} I \oplus 
A \otimes_h X \otimes_{\chi} I,0 \# 0 \# F]$$
$$=
[\mbox{id} \oplus \mbox{id}, (A \otimes_{u \chi} I, \alpha \otimes \gamma) \oplus 
(A \otimes_{h \chi} I, \alpha \otimes \Gamma), 0 \# F]$$
$$=
[u \chi \oplus h \chi, (I \oplus I, \gamma \otimes \Gamma), F]$$

Multiplying this 
with 
$
\cala (e^{-1})=[(I E,\gamma),0] \in 
KK^G((I,\gamma),(B ,\beta))$ this gives
$$
\cala (h \mu \Delta_u e^{-1})
=
[u \chi \oplus h  \chi,  (I, \gamma) \otimes_{(I,\gamma)} (I E,\gamma) \oplus 
(I,\Gamma)  \otimes_{(I,\gamma)} (I E,\gamma) , F \otimes 1]$$
$$=
[u \chi \oplus h  \chi,  (I E \oplus I E,\gamma \oplus \Gamma) , F ]$$
$$=
[u \chi r \oplus h  \chi r,  (\H_B \oplus \H_B , Y \circ \gamma  \circ Y^{-1} \oplus  Y \circ \Gamma \circ Y^{-1} ) , F ]$$

If we had allowed $j$ to be general, then
the $G$-actions on $I$ would have been
$j^{-1} \circ \Gamma \circ j$ and
 $j^{-1} \circ \gamma \circ j$.
\if 0
$$=
[h \chi r \oplus u  \chi r,  (\H_{A_0},\gamma) \oplus 
(\H_{A_0},\alpha) , T ]$$

by identifying $(A_0,\alpha)$ modules

$$(A, \gamma) \otimes_1 (A e,\alpha)
\cong (A e,\gamma) \cong (\H_{A_0},\gamma)$$

$$V:Q \otimes_h X \otimes_\chi A
\rightarrow
Q \otimes_{h \chi} A
: W(q \otimes x \otimes a)= q \otimes x a$$

$$V:Q \otimes_h X \otimes_{g u \chi} A
\rightarrow
Q \otimes_{u \chi} A
: W(q \otimes x \otimes a)= q \otimes u(g(x)) a$$

$$<q \otimes x \otimes a,q' \otimes x' \otimes a'> = a^*  u(g(x^* h(q^* q') x')) a'$$

$$=  a^*  u(g(x^*)) u(q^* q') u(g(x')) a'
= <W(q \otimes x \otimes a),W(q' \otimes x' \otimes a')>$$

damit iso

$V$ surjectiv:
 $$q^* q \otimes a = q^* \otimes u(q) a =  q^* \otimes u(g(x)) a
= V(q^* \otimes x \otimes a)$$

for any lift $q=g(x)$

also ist

$$=
[id  \oplus id,  Q \otimes_{h \chi} (A, \gamma) \oplus 
 Q \otimes_{u \chi} (A,\alpha) , 0 \# T]$$

$$=
[h \chi \oplus u  \chi,  (A, \gamma) \oplus 
(A,\alpha) , T]$$

multiplied by $\alpha(e^{-1})$ is

$$=
[h \chi \oplus u  \chi,  (A, \gamma) \otimes_{(A,\alpha)} (A e,\alpha) \oplus 
(A,\alpha)  \otimes_1 (A e,\alpha) , T \otimes 1]$$

$$=
[h \chi r \oplus u  \chi r,  (\H_{A_0},\gamma) \oplus 
(\H_{A_0},\alpha) , T ]$$

by identifying $(A_0,\alpha)$ modules

$$(A, \gamma) \otimes_1 (A e,\alpha)
\cong (A e,\gamma) \cong (\H_{A_0},\gamma)$$

$$r:\call_{A}(A) \rightarrow \call_{A_0}(\H_{A_0})$$
---

$$=
[\mbox{id} \oplus \mbox{id},Q \otimes_h X \otimes_\chi A \oplus 
Q \otimes_U X \otimes_{\chi} A,0 \# 0 \# T]$$
$$=
[h \oplus u,  X \otimes_\chi A \oplus 
X \otimes_{g u\chi} A, 0 \# T]$$

cutoff of cycle, sodass $gu$ weg

$0 \# T$ etc. ist alles nur $T$-connection

($[\calk \otimes 1,T_2] \in \calk$ )

$$=
[h \chi \oplus u  \chi,  (A, \gamma) \oplus 
(A,\alpha) , T]$$

$$=
[h \oplus u ,  (X, \gamma) \oplus 
(X,\alpha) , T]
\cdot
[ p_1 \chi \oplus p_2  \chi,  (A, \gamma) \oplus 
(A,\alpha) , T]
$$

$$=
[h \oplus u, (A,T) \oplus 
(A,S),T_2]$$

where $T$ is the flip operator.

\fi
\end{proof}

\begin{lemma}		\label{lemma165}
Consider the last lemma.
{\rm (i)} Then 
$$
\calb( \cala(h \mu \Delta_{u} e^{-1})) = h \mu \Delta_{u} e^{-1}$$
provided $\delta = \alpha \otimes 1$ in corollary
\ref{cor79}.

{\rm (ii)} The first line is the second line in $GK$ of the diagram of the last lemma.
 
That is, $h \mu \Delta_u e^{-1} = s_- \mu \Delta_{s_+}
e^{-1}$.
\end{lemma}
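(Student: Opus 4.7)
The plan is to handle part (ii) first and deduce (i) from it. For (ii), I apply Lemma \ref{lemma21} to the diagram of Lemma \ref{lemma121}. The vertical map $\phi: X \to \call_B(\H_B^2) \square A$ is taken to be
$$\phi(x) = (\chi r(x) \oplus 0) \square f(x),$$
where $\chi$ denotes the canonical embedding $X \hookrightarrow \call_X(X)$ by left multiplication and $r$ is the corner identification of Definition \ref{def161}. Forced by the shape of the Kasparov cycle computed in Lemma \ref{lemma121}, the splits of the lower line are then $s_-(a) = (u \chi r(a) \oplus 0) \square a$ and $s_+(a) = (0 \oplus h \chi r(a)) \square a$, with the first $\H_B$-summand carrying the $\gamma$-action and the second the $\Gamma$-action. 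The hypotheses $i\phi = j\psi$, $f s_- \phi = \phi g u$ and $s_+ \psi = a t_+$ of Lemma \ref{lemma21}(i),(ii) reduce to immediate algebraic verifications from these formulas, and conclusion (iii) delivers the desired equality $h\mu\Delta_u e^{-1} = s_- \mu \Delta_{s_+} e^{-1}$ in $GK$.

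For (i), I apply $\calb$ (via Definition \ref{defB}, or its generalization in Lemma \ref{lemma122}) to the cycle
$$[u\chi r \oplus h\chi r,\ (\H_B,\gamma') \oplus (\H_B,\Gamma'),\ F]$$
produced by Lemma \ref{lemma121}, where $F$ is the flip operator. Since $F$ is already a self-adjoint unitary, Definition \ref{defB} applies directly and yields $\tilde t_+ \mu \Delta_{\tilde t_-} e^{-1}$ with $\tilde t_-(a) = (u\chi r(a) \oplus 0)\square a$ and $\tilde t_+(a) = F(0 \oplus h\chi r(a))F^* \square a = (h\chi r(a) \oplus 0) \square a$, inside $\call_B(\H_B \oplus \H_B) \square A$ carrying the $M_2$-action $\theta^{F\square 1}$. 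This differs from the second line of Lemma \ref{lemma121} only by the coordinate in which $\tilde t_+$ is placed, and a second application of Lemma \ref{lemma21} with $\Phi = F \otimes 1_{M_2}$ acting as inner conjugation identifies the two diagrams. Combining with (ii) yields $\calb(\cala(h\mu\Delta_u e^{-1})) = h\mu\Delta_u e^{-1}$.

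The principal obstacle is the bookkeeping of $G$-actions across the multiple identifications $Y, \zeta, r, \kappa$, and in particular reconciling the action $\theta^{F\square 1}$ produced by $\calb$ with the action ${\rm Ad}(\gamma' \oplus \Gamma') \square (\alpha \otimes 1_{M_2})$ of the original diagram. The hypothesis $\delta = \alpha \otimes 1$ from Corollary \ref{cor79} is exactly the cleanness condition needed: it allows Lemma \ref{lemma272}(iii)--(iv) to apply so that the constructed $M_2$-action agrees with $\theta^{F \square 1}$, and the Kasparov compactness condition $[s(a), F] \in \calk$ guarantees that $\tilde t_+$ actually lands in the $\square$-subalgebra as required by Lemma \ref{lemma812}.
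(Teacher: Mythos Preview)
Your overall strategy matches the paper's: connect the two lines of Lemma~\ref{lemma121} via Lemma~\ref{lemma21} using $\phi(x)=(\chi r(x)\oplus 0)\square f(x)$, and then recognise the second line as the output of $\calb$. The gap is in your choice $s_+(a)=(0\oplus h\chi r(a))\square a$. With the natural $\Phi=\phi\otimes 1_{M_2}$ (which is the only $\Phi$ you indicate), Remark~\ref{cor22} gives $\psi=\phi$ as non-equivariant maps, and Lemma~\ref{lemma21}(ii) then forces
\[
s_+(a)=(h\psi)(a)=\phi(h(a))=(h\chi r(a)\oplus 0)\square a,
\]
landing in the \emph{first} summand, not the second. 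Your ``immediate algebraic verification'' of condition~(ii) therefore fails as written, and you do not supply an alternative $\Phi$ that would send $h$ to the second summand.

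The paper avoids this by placing both $s_\pm$ in the first summand and taking the $M_2$-action on the second line to be $\theta^{F\square 1}$. This is exactly what Definition~\ref{defB} produces from the cycle of Lemma~\ref{lemma121}, so part~(i) follows directly from part~(ii) with no ``second application of Lemma~\ref{lemma21}'' needed. What you correctly identify as the principal obstacle---the $G$-equivariance of $\Phi=\phi\otimes 1$ between the given action on $M_2(X)$ and $\theta^{F\square 1}$---is the genuine content of the proof; the paper handles it by the explicit formula $\chi r(x)=Y\circ j^{-1}\circ m_x\circ j\circ Y^{-1}$ and a check on the lower-left corner via Corollary~\ref{cor74}. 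The hypothesis $\delta=\alpha\otimes 1$ is what makes the target action equal to $\theta^{F\square 1}$ rather than merely some transported action, which is why it is needed for~(i) but not~(ii).
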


\begin{proof}
(i)
If we put the computed $KK$-element of the last lemma into $\calb$, we get exactly the second line of the last lemma as follows:

(ii)
We complete the diagram of the last lemma by setting $\psi = j \phi \kappa^{-1}$ and 
\begin{eqnarray*}
\phi(x)  &=&  \chi r(x) \oplus 0 \square 
f(x)  \\
s_-(a) &=& u \chi r(a) \oplus 0 \square a \\
s_+(a) &=& h \chi r(a) \oplus 0 \square a
\end{eqnarray*}

We equip $D:=\call_B(\H_B^2) \square A$
with the $G$-action ${\rm Ad}(S) \square \alpha$, 
where $S$ is the obvious $G$-action notated in the cylce of the last lemma.
\if 0
Let $\delta$
be from corollary \ref{cor79}.
\fi
We define the 
$G$-action of $M_2(D)$ to be 
$\theta^{F \square 1}$.
The first line of the diagram is the second line  by lemma 
\ref{lemma21} 
for $\Phi= \phi \otimes 1$.
\if 0
Thereby we define the $M_2$-action of the second line 
in such a way that the bijective map $\Phi$ becomes equivariant.
\fi
Thereby we
note that 
$$\chi r (x) = Y \circ j^{-1} \circ m_x \circ j \circ Y^{-1}$$
where $m_x$ is multiplication with $x$, such that
$$
Y \circ j^{-1} \circ \Gamma_g \circ j \circ Y^{-1}
\circ
\chi r (x) \circ Y \circ j^{-1} \circ \gamma_{g^{-1}} \circ j \circ Y^{-1}
= \chi r (\Gamma_g(x))$$
which shows equivariance of $\Phi$ in the lower left corner, which is sufficient by corollary \ref{cor74}.
 
\if 0
Observe the equivariance of $\Phi$ in the lower left corner
$$\phi(\Gamma(x)) = Y^{-1}(j^{-1}(\Gamma(x) Y(j(b))) )$$
\fi

\if 0 
Observe the $G$-equivariance of $\Phi$ in the left column of $M_2(X)$, which is sufficient by 
corollary \ref{cor74}.
\fi

\if 0
(i)
If we put the computed $KK$-element of the last lemma into $\calb$, we get exactly the second line of the last lemma.
\fi

%
\if 0
And this line is the same as when we put
the result of the last 
lemma into $\beta$,
By lemma \ref{lemma81}.(ii)
and lemma \ref{lemma81}.
\fi

For general $\delta$ we define the $M_2$-action of the second line of the diagram in such way that the bijective map $\Phi$ is equivariant.
\end{proof}

\begin{lemma}    \label{lemma106}   
$\cala \circ \calb = {\rm id}$.
\end{lemma}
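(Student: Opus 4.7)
The plan is to combine lemmas \ref{lemma122} and \ref{lemma121}. Let $z = [s_- + s_+, (\cale, S), F] \in KK^G(A,B)$. By lemma \ref{lemma122}, $\calb(z) = t_+ \mu \Delta_{t_-} e^{-1}$, where the double split exact sequence has $\call_B(\cale \oplus \H_B) \square A$ as middle object, the $M_2$-action is $\theta^{U(F) \square 1}$, and
\[
t_-(a) = (s_-(a) \oplus 0) \square a, \qquad t_+(a) = U(F)\bigl(s_+(a) \oplus 0\bigr)U(F)^* \square a.
\]
Applying lemma \ref{lemma121} with $u = t_-$ and $h = t_+$ then expresses $\cala(\calb(z))$ as the Kasparov cycle
\[
\bigl[ t_- \chi r \oplus t_+ \chi r, \; (\H_B, \gamma_1) \oplus (\H_B, \gamma_2), \; F_{\mathrm{flip}} \bigr],
\]
where $\gamma_1, \gamma_2$ are the upper-left and lower-left $G$-actions of $\theta^{U(F)\square 1}$; these, after transport through $Y \circ j^{-1}$, differ exactly by conjugation with $U(F)$, i.e.\ $\gamma_2 = U(F) \gamma_1 U(F)^*$.

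To finish I would conjugate the cycle by the $G$-Hilbert $B$-module unitary $W := \mathrm{diag}(1, U(F))$: this converts $(\H_B, \gamma_2)$ into $(\H_B, \gamma_1)$, turns the flip $F_{\mathrm{flip}}$ into the operator $U(F)$, transforms $t_+ \chi r$ into $(s_+ \oplus 0) \chi r$, and leaves $t_- \chi r = (s_- \oplus 0) \chi r$ unchanged. Inverting the identifications $Y, r, \kappa, j, \chi$ from definitions \ref{def161}, \ref{defB} and (\ref{chi}) returns us to the module $\cale \oplus \H_B$ with action $S \oplus V$, and the cycle becomes
\[
\bigl[(s_- + s_+) \oplus 0, \; (\cale \oplus \H_B, S \oplus V), \; U(F)\bigr],
\]
which by (\ref{secz}) is precisely $z$.

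The main obstacle is the $G$-action bookkeeping. One must verify that the two layers of identifications, first $Y, r, \chi$ (coming from the images of $\cala$ on the corner inverse and on $\Delta_{t_-}$) and then $\kappa, j$ (coming from the diagram in the definition of $\calb$), together pull $\gamma_1$ back to $\mathrm{Ad}(S \oplus V)$ on the untwisted $\cale \oplus \H_B$, and that the inner twist built into $\theta^{U(F) \square 1}$ is exactly the conjugation by $U(F)$ that $W$ is designed to undo. Once these identifications are unpacked, $G$-equivariance of $W$ follows from the defining relation $V \circ U(F) = U(F) \circ S$ (see definition \ref{defB}), and the final matching with $z$ via (\ref{secz}) is immediate.
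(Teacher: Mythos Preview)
Your strategy is the paper's strategy: feed $\calb(z)$ into lemma~\ref{lemma121} and read off the resulting Kasparov cycle. The difficulty is in your final identification.

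After conjugating by $W=\mathrm{diag}(1,U(F))$, the module is still the \emph{doubled} module $(\cale\oplus\H_B)\oplus(\cale\oplus\H_B)$ with external grading $-\oplus+$, and the operator becomes
\[
W^{*}\,F_{\mathrm{flip}}\,W=\begin{pmatrix}0&U(F)\\ U(F)&0\end{pmatrix},
\]
not $U(F)$ acting on a single copy of $\cale\oplus\H_B$. So your asserted cycle $[(s_-+s_+)\oplus 0,\;\cale\oplus\H_B,\;U(F)]$ lives on a module of half the size of what you actually have. You still owe an argument that the doubled cycle represents $z$, i.e.\ that it splits as $z$ plus a degenerate summand. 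For a general $F$ this splitting is not obvious: $U(F)$ mixes the internal grading of $\cale$ with the stabilising $\H_B$, and the supports of $s_-\oplus 0$ and $s_+\oplus 0$ do not sit in complementary $U(F)$-invariant pieces.

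The paper sidesteps this by choosing, from the outset, a representative of $z$ whose operator is already the flip: write $z=[s_-\oplus s_+,(\H_B^2,S\oplus V),H]$ with $H$ the flip. Then $\theta^{H\square 1}={\rm Ad}\bigl((S\oplus V)\oplus(V\oplus S)\bigr)\square(\alpha\otimes 1)$, and since $j=\kappa$ one has $\chi r={\rm id}$ and the transported actions are literally $S\oplus V$ and $V\oplus S$. Lemma~\ref{lemma121} then outputs
\[
[s_-\oplus 0\oplus s_+\oplus 0,\;(\H_B^4,S\oplus V\oplus V\oplus S),\;F_{\mathrm{flip}}],
\]
which splits on the nose: coordinates $1,3$ give back $z$, and coordinates $2,4$ (zero representation, flip operator) form a degenerate cycle. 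No conjugation by $W$ is needed.

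Since $\calb$ is well-defined on homotopy classes (lemma~\ref{lemma112}), you may always pass to such a flip representative first; once you do, your conjugation becomes unnecessary and the missing splitting is immediate.
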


\begin{proof}
We 
replace the
first line of the diagram of lemma \ref{lemma121}
by the diagram of definition \ref{defB}.
So we have $j=\kappa$, and thus by lemma \ref{lemma162},
$\chi r = {\rm id}$.

Let ${\rm Ad}(S \oplus T) \square (\alpha \otimes 1)$ be the $M_2$-action of the first line.

Then 
the action on $\H_B$ of the second line computes as follows:
Note that $\Gamma(x) = T_g \circ x  \circ S_{g^{-1}}$.
 
We take a column vector $Y^{-1}([b_{i1}]_i) \in (B\otimes \calk)E$, go with it into $\kappa$, and apply the action $\Gamma$ there and see what happens:
$$\Gamma_g(\kappa(Y^{-1}([b_{i,1}])) ([\xi_j])
= T_g( [b_{i,1}] \cdot S_{g^{-1}}([\xi_j]))
= T_g([b_{i,1} \beta_{g^{-1}}(\xi_1)])
= T_g([b_{i,1}]) \xi_1 $$
Thus we get
$$\Gamma_g(\kappa (Y^{-1}( [b_{i,1}]))) =  \kappa (Y^{-1} (T_g[b_{i,1}]))$$
or
$$Y \circ \kappa^{-1} \circ \Gamma_g \circ \kappa
\circ Y^{-1} = T_g$$
Analogously, $Y \circ \kappa^{-1} \circ \gamma_g \circ \kappa
\circ Y^{-1} = S_g$.

If $z=[s_- \oplus s_+, (\H_B^2,S \oplus V), H]$
with $H$ the flip, then $T= V \oplus S$,
and by lemma \ref{lemma121} we have shown
$$\cala (\calb(z))=
[s_- \oplus 0 \oplus s_+ \oplus 0,
(\H_B^4, S \oplus V \oplus V \oplus S), F]
= z$$

\if 0
This is the same as 

$$Y \circ \gamma_g \circ Y^{-1}(\xi_i)
Y \circ \gamma_g \circ Y^{-1}(b_i)
$$
\fi 
\end{proof}


\section{Preparation for pushout construction}

\label{sec11}

We remark that the proofs given in this section
also follow directly from known results in $KK$-theory and an application of lemma \ref{lemma165}.(i).
We still recall the proofs in the framework of $GK$-theory because it is an important technique and the proof is not so long.

\if 0
lemma \ref{lemma271} folgt eigentlich schon
von lemma \ref{161}

indem man direkt von KK-theory
das resultat abgreift

The proofs of this ection follows eigentlich
already directly with lemma by abholiong the known results from $KK$-theory.

Indeed, anticipating the next lemma, given the first line of the following diagram put it into $\cala$, apply a result in $KK$-theory and go with it into $\calb$ to get the fourth line, which must be the first line by lemma.

Nevertheless we give a proof in the framework
of $GK$-theory, which is essentially that of $KK$-theory, to avoid to use $KK$-theory results too much as a black box.
\fi

For two 
$*$-homomorphisms
$s_\pm : A \rightarrow \call(\cale)$ 
and a Hilbert $A$-module $\calf$ we write
$$\calf \otimes_{s_\pm} \cale :=
\calf \otimes_{s_-} \cale
\oplus 
\calf \otimes_{s_+} \cale
\cong \calf \otimes_{s_- \oplus s_+} (\cale \oplus \cale)$$

\begin{lemma}		\label{lemma271}
Let the first line of the following diagram be given. Then it can be completed to this diagram
$$\xymatrix{
B  \ar[r]^e \ar[d] &
B \otimes \calk \ar[r]
\ar[d]
&
\call( \cale  \oplus \H_B 
) \square A 
 \ar[rr]
\ar[d]^\phi
 && A \ar@<.5ex>[ll]^{s_\pm \oplus 0 \square} 
\ar[d]
\\
B \ar[r]  & 
B \otimes \calk \ar[r]		
&
\call( 
\tilde A \otimes_{s_\pm} \cale
\oplus \H_B 
) \square A 
 \ar[rr]	
 && A \ar@<.5ex>[ll]^{u_\pm}   
\\
%
B[0,1]  \ar[r] \ar[u] \ar[d]  &
B[0,1] \otimes \calk \ar[r]		\ar[u]		\ar[d]
& \call \Big ( Z  \otimes_{y_\pm}
\cale [0,1]   \oplus \H_{B[0,1]}
\ar[u]
 \Big )  \square A \ar[rr] \ar[d]^{b_1} 
\ar[u]_{b_0}
&& A 
\ar@<.5ex>[ll]^{t_\pm} 
\ar[d]	\ar[u]
\\
B \ar[r] &
B \otimes \calk \ar[r]		
&
\call( 
A \otimes_{s_\pm} \cale
\oplus \H_B 
) \square A 
 \ar[rr]
 && A \ar@<.5ex>[ll]^{v_\pm}
}$$
such that the first line is the 
last line in GK,
i.e. $(s_+ \oplus 0 \square 1) \mu 
\Delta_{s_- \oplus 0 \square 1} e^{-1}= v_+ \mu 
\Delta_{v_-}  e^{-1}$.
Thereby 
\begin{eqnarray*}
v_-(a) &=&  
 a \otimes 1  \oplus 0\oplus 0 
\square a 			\\
v_+(a) &=& 
U(H) \circ (0 \oplus a \otimes 1  \oplus 0 ) \circ U(H)^* \square a    
\end{eqnarray*}
where $H$ is 
a $F$-connection
on
$A \otimes_{s} (\cale \oplus \cale)
\cong A \otimes_{s_\pm} \cale$
for the flip operator $F$ on $\cale \oplus \cale$. 

We assume that the $G$-action on $M_2(\call(\cale \oplus \H_B) \square A)$ of the given first line of the diagram is 
${\rm Ad}(S \oplus R ,  T \oplus R) \square (\alpha \otimes 1)$.
The $G$-action on $M:=\call(A \otimes_{s_\pm} \cale \oplus \H_B) \square A$ of the last line of the diagram then is 
$${\rm Ad}( 
\alpha  \otimes S  \oplus  \alpha \otimes T
\oplus R
 ) \square \alpha$$
and on
$M_2(M)$
it is 
$\theta^{U(H) \square 1}$.


\end{lemma}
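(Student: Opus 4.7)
The plan is to construct lines two, three, and four of the diagram successively, invoking lemma \ref{lemma21} for the static transitions and lemmas \ref{lemma91} and \ref{lemma123} to collapse the $B[0,1]$-homotopy down to a statement in $GK$. The result is the $GK$-theoretic incarnation of the classical Kasparov pushout trick, so we borrow its structural ingredients (internal tensor product, mapping cylinder, $F$-connection) and phrase them within the category $GK$.

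For line two, I replace $\cale$ by the internal tensor product $\tilde A \otimes_{s_\pm} \cale$, converting the representations $s_\pm$ into the canonical left-multiplication $a \otimes 1$. Concretely, define the equivariant $*$-homomorphism
\[
\phi(T \square a) \;=\; \bigl(1 \otimes T \oplus 0\bigr) \,\square\, a
\]
and splits $u_\pm(a) = (a \otimes 1 \oplus 0)\square a$. The $M_2$-action on the target is the natural tensor-product action ${\rm Ad}(\alpha \otimes S \oplus \alpha \otimes T \oplus R)\square(\alpha \otimes 1)$. Equivariance of $\phi$ on the lower-left corner (sufficient by corollary \ref{cor74}) is immediate, and lemma \ref{lemma21} applied to $\Phi = \phi \otimes 1_{M_2}$ identifies line one with line two in $GK$.

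For the transition from line two to line four, I interpolate via a mapping cylinder: let $Z$ be the Hilbert module of continuous paths $f:[0,1] \to \tilde A \otimes_{s_\pm} \cale$ with $f(1) \in A \otimes_{s_\pm}\cale$, and let $y_\pm$ be the interpolating left action inherited from $a \otimes 1$. Then $Z \otimes_{y_\pm} \cale[0,1]$ is a Hilbert $B[0,1]$-module whose evaluations $b_0$ and $b_1$ recover lines two and four respectively. Lemma \ref{lemma91} rewrites line one in terms of line three evaluated at $t=0$, while lemma \ref{lemma123} shows that evaluation at $t=0$ and $t=1$ agree in $GK$ provided the upper-left $M_2$-corner action is constant in $t$; this constancy holds because the upper-left corner is always the same $G$-algebra. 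Thus line two equals line four in $GK$.

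For the last line, I produce the required $M_2$-action $\theta^{U(H)\square 1}$ by taking $H$ to be an $F$-connection on $A \otimes_s (\cale \oplus \cale) \cong A \otimes_{s_\pm} \cale$ (for $F$ the flip on $\cale \oplus \cale$), guaranteed by Kasparov's technical theorem. Because $[v_\pm(a), H]$ is compact by the $F$-connection condition, the unitary $U(H)$ built from $H$ via definition \ref{def71} satisfies the commutator condition of lemma \ref{lemma272}.(iv), so $\theta^{U(H)\square 1}$ is indeed a $G$-action leaving $M_2(M)$ invariant and the splits $v_\pm$ are legitimate equivariant data. The main obstacle is transporting the Kasparov-type commutator condition uniformly across the homotopy so that the $M_2$-actions on line three form a coherent family over $[0,1]$; this is exactly where the Connes–Skandalis reformulation of connections alluded to in the section heading is indispensable.
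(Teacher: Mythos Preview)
Your transition from line one to line two contains a genuine gap: the map $\phi(T \square a) = (1 \otimes T \oplus 0)\square a$ is not well-defined. On an internal tensor product $\tilde A \otimes_{\tilde s_\pm} \cale$ the operator $1 \otimes T$ for $T \in \call(\cale)$ only exists when $T$ commutes with the image of $\tilde s_\pm$, which fails for generic $T \in \call(\cale \oplus \H_B)$. (What is always defined is $T \otimes 1$ for $T$ acting on the \emph{left} tensor factor, which is the opposite situation.) Consequently the direct application of lemma \ref{lemma21} you propose between lines one and two does not go through.

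The paper avoids this by reversing the logical order. It first builds line three: $Z$ is the $C^*$-algebra $\{f \in \tilde A[0,1]\mid f(1)\in A\}$ (not a Hilbert module of paths), $y_\pm$ are the $*$-homomorphisms $\tilde s_\pm \otimes {\rm id}$, and one chooses an $F$-connection $V$ on $\calh = Z \otimes_{y_-\oplus y_+}(\cale[0,1]\oplus\cale[0,1])$ via \cite[lemma 2.7]{kasparov1988}. Feeding the resulting Kasparov cycle $[{\rm id},\calh,V]$ into lemma \ref{lemma122} produces line three, and lines two and four are its evaluations at $t=0,1$ via lemma \ref{lemma91}. The point of the unitisation is that at $t=0$ one has the canonical isomorphism $\tilde A \otimes_{\tilde s_-\oplus \tilde s_+}(\cale\oplus\cale)\cong \cale\oplus\cale$, under which $a\otimes 1$ becomes $s_\pm(a)$ and the evaluated connection $H' = V\otimes_{\phi_0}1$ is an $F'$-connection on $\cale\oplus\cale$, hence operator-homotopic to the flip $F'$ itself. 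Only after invoking this operator homotopy (and lemma \ref{lemma81}.(ii) to replace $U(H')$ by $F'$) can one write down a legitimate $\phi(x\square a)=x\oplus 0\square a$ and check lemma \ref{lemma21} between lines one and two. This use of $\tilde A$ and the degeneration of the connection to the flip at $t=0$ is the missing idea in your sketch.
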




\begin{proof}


Let $C([0,1])$ have the trivial $G$-action.
Let $(\tilde A,\tilde \alpha)$ be the unitization of $(A,\alpha)$, see \cite[def. 3.3]{buniversal} for inverse semigroups and \cite{legall} for groupoids.
Set
\begin{eqnarray*}
Z
&=& \{f \in \tilde A[0,1]|\, f(1) \in A\} \subseteq (\tilde A,\tilde \alpha) \otimes C([0,1])		\\
\cale[0,1] &:=& \cale \otimes C([0,1])
\end{eqnarray*}
Let $\tilde s_+, \tilde s_-: \tilde A \rightarrow \call(\cale)$ be the natural extensions
of $s_\pm$, and set
$$
y_\pm: \tilde A [0,1] \rightarrow \call(\cale[0,1])  :
y_\pm(x \otimes f) = \tilde s_\pm(x) \otimes f 
$$

Let $F$ be the flip operator on 
$\calf:=\cale[0,1] \oplus \cale[0,1]$.
Write
$$\calg= Z \otimes_{y_\pm} \cale[0,1]
\cong Z \otimes_{y_- \oplus y_+} \calf
=: \calh.$$

The $G$-action on $\calg$ is 
$$ 
(\tilde \alpha \otimes 1)  \otimes (S \otimes 1) \oplus (\tilde \alpha \otimes 1) \otimes (T \otimes 1)$$
and correspondingly the $G$-action on $\calh$ by the last isomorphism.

Choose a $F$-connection $V \in \call_B(\calh)$
on $\calh$
by 
\cite[lemma 2.7]{kasparov1988}.

\if 0
All $G$-actions in the diagram are the canonically derived ones, for example on $M_2(\call(\calf \oplus \H_{B[0,1]}) \square A)$
it is 
$${\rm Ad}((\tilde \alpha \otimes 1) \otimes (S \otimes 1) \oplus (R \otimes 1),
(\tilde \alpha \otimes 1) \otimes (T \otimes 1) \oplus (R \otimes 1)) \square \alpha$$
\fi

Since $a \in \calk_A(Z) \otimes 1 \subseteq \call_B(\calh)$ for all $a \in A$,
by 
\cite[lemma 2.6]{kasparov1988}, $[a,V] \in \calk_B(\calh)$.
Similarly, by \cite[Lemma 10]{bsemimultikk}
$$a g(V) - a g(1) V \in \calk_B(\calh)$$
for all $a \in \calk_A(Z) \otimes 1$.

By 
\cite[Prop 9.(h)]{skandalis}, $z= [ 
{\rm id},\calh,V] \in KK^G(A,B)$.
Go with $z$ into lemma \ref{lemma122}
and create the third line of the diagram with it.
That is we set
\begin{eqnarray*}
t_-(a) &=& (a \otimes 1 \oplus 0 \oplus 0) \square a  \\
t_+(a) &=&
U(V)(0 \oplus a \otimes 1 \oplus 0) U(V)^* \square a
\end{eqnarray*}
The 
$M_2$-action of the third line of the diagram is $\theta^{U(H) \square 1}$.

The second and the fourth line of the diagram are the evaluations of the third line at time zero and one as written in lemma \ref{lemma91}.

By lemma \ref{lemma81}.(iii) we have 
$U(V) \otimes_{\phi_t} 1 = U(H)$ for $H:=V \otimes_{\phi_1} 1$.
Note that we get $v_\pm$ as claimed.

Completely analogously are $u_\pm$ defined
for $H' := V \otimes_{\phi_0} 1$.
Note that $H'$ is a $F'$-connection on 
$$\big (\tilde A \otimes_{s_- \oplus s_+} (\cale \oplus \cale), \tilde \alpha \otimes (S \oplus T) \big ) \cong
(\cale \oplus \cale,  S \oplus T)$$
where $F'$ is the flip on $\cale \oplus \cale$. Recall that there is a operator homotopy
$({\rm id},\cale \oplus \cale,H') \sim 
({\rm id},\cale \oplus \cale,F')$.
By lemma \ref{lemma81}.(ii) we can replace $U(H')$ by $F'$
in the definition of $u_+$.
Under identification of the last isomorphism, define
$$\phi(x \square a)= x \oplus 0  \square a$$
where $0$ is the operator on $\cale \cong  
\tilde A \otimes_{s_+} \cale$.
Verify the identity in $GK$ of line one and two
of the diagram with lemma \ref{lemma21}.
%
%
\if 0
By \ref{lemma201} and \ref{lemma272}
we can form $M_2(\call(\calf) \square A)$.


Set

is double split.

auswertung bei 0 ergibt $T_0$. da $\tilde A \otimes_{\tilde A} \cale \cong \cale$,
ist $T_0$ compacte stoerung von $Flip \in \call(\cale)$. 

beachte lemma \ref{lemma81}.(ii), 

Hence, $\Delta_{s_+} F s_- F^* = \Delta_{s_+} T_0 s_- T_0^*$ by a homotopy. 
\fi
\end{proof}

\begin{lemma}			\label{lemma182}
The last lemma still holds true if we choose any
$F$-connection $H$.
\end{lemma}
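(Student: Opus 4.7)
The plan is to reduce the statement to the operator homotopy invariance of $\calb$ already established in lemma \ref{lemma124}.

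First I would observe that the morphism $v_+ \mu \Delta_{v_-} e^{-1}$ constructed in lemma \ref{lemma271} is essentially an application of lemma \ref{lemma122} (i.e.\ of $\calb$) to the Kasparov cycle $z_H := [{\rm id}, A \otimes_{s_\pm} \cale, H]$: the formulas for $v_\pm$ and the $M_2$-action $\theta^{U(H) \square 1}$ are obtained from $H$ exactly as $t_\pm$ and the $M_2$-action in lemma \ref{lemma122}, and only $v_+$ (and the $M_2$-action) depends on the choice of $H$, while $v_-$ and $e$ do not.

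Next, given the specific $F$-connection $H_0 := V \otimes_{\phi_1} 1$ from the proof of lemma \ref{lemma271} and an arbitrary $F$-connection $H_1$, I would form the straight-line path $H_m := (1-m) H_0 + m H_1$ for $m \in [0,1]$. The $F$-connection property (characterized by compactness of certain graded commutators with the creation/annihilation operators, see \cite[Prop.~9]{skandalis}) is affine in the operator, so $H_m$ is an $F$-connection at every $m$. Similarly, the two Kasparov conditions $[a, H_m] \in \calk$ and compactness of the equivariance obstruction $a g(H_m) - a g(1) H_m$ (cf.\ the verification in the proof of lemma \ref{lemma271} via \cite[lemma 2.6]{kasparov1988}, \cite[Lemma 10]{bsemimultikk}) are also affine in $H_m$, so the family $({\rm id}, A \otimes_{s_\pm} \cale, H_m)_{m \in [0,1]}$ constitutes a genuine operator homotopy of Kasparov cycles in $KK^G(A,B)$.

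Finally, I would invoke lemma \ref{lemma124} to conclude that $v_+^{(H_0)} \mu = v_+^{(H_1)} \mu$ in $GK$; post-composing with $\Delta_{v_-} e^{-1}$ then gives
$$v_+^{(H_0)} \mu \Delta_{v_-} e^{-1} = v_+^{(H_1)} \mu \Delta_{v_-} e^{-1}.$$
Since lemma \ref{lemma271} already establishes the identity with the original morphism for the specific $H_0$, it follows for the arbitrary $F$-connection $H_1$ as well. The only potential obstacle is verifying that a prescribed $F$-connection $H_1$ satisfies the Kasparov conditions needed to form $z_{H_1}$; but these are standard consequences of the $F$-connection property together with the fact that $a$ acts as a compact operator on the first tensor factor $A$, exactly as in the specific construction in the proof of lemma \ref{lemma271}.
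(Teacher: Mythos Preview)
Your proof is correct and reaches the conclusion via the same final step as the paper (lemma \ref{lemma124}), but the route to the operator homotopy is genuinely different.

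The paper does not use a straight-line path. Instead it observes that for \emph{any} $F$-connection $H$ the triple $[{\rm id}_A,\calh,H]$ (with $\calh = A\otimes_{s_-\oplus s_+}(\cale\oplus\cale)$) satisfies the three clauses of \cite[def.~2.10]{kasparov1988} for a Kasparov product $1_A\otimes_A z$: the commutator condition $[a,H]\in\calk(\calh)$ comes from \cite[prop.~9.(e)]{skandalis}, the connection condition is the hypothesis, and the positivity condition $a[0,H]a^*\ge 0$ is trivial because the operator in $1_A$ is zero. It then invokes the \emph{uniqueness of the Kasparov product up to operator homotopy} \cite[12.(a)]{skandalis} to conclude that any two $F$-connections $H_1,H_2$ yield operator-homotopic cycles.

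Your convexity argument avoids this appeal to Skandalis' uniqueness theorem: you note that each of the relevant conditions (the $F$-connection property, $[a,H]\in\calk$, and the equivariance obstruction $a\,g(H)-a\,g(1)H\in\calk$) is affine in $H$, so the affine segment $H_m=(1-m)H_0+mH_1$ is itself an operator homotopy of cycles. This is more elementary and self-contained. The paper's route, on the other hand, packages the verification into a single citation of a standard deep result, and makes the conceptual point (which is reused in proposition \ref{lemma4}) that the fourth line of lemma \ref{lemma271} is literally $\calb$ applied to a Kasparov product $1_A\otimes_A z$.
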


\begin{proof}


Let $H \in \call(\calf)$ be any $F$-connection.
%
Consider the 
unit $1_A:=[id_A ,A ,0] \in KK^G(A,A)$.
Set $\calh:=A  \otimes_{s_+ \oplus s_-} (B \oplus B)$.
Then $[id_A , \calh , H]$
is a 
Kasparov product $1_A \otimes_A z$ because
$[a ,H] \in \calk(\calh)$ for all $a \in A$ by 
\cite[prop. 9.(e)]{skandalis},
$H$ is a $F$-connection, and $a [0,H] a^* = 0 \ge 0 \mod \calk(\calh)$, and 
so \cite[def. 2.10]{kasparov1988}
applies.


Hence
by the uniqueness of the Kasparov product,
$(id_A \oplus 0, \calh,H_1)$ and
$(id_A \oplus 0, \calh,H_2)$ are operator homotopic by Skandalis 
\cite[12.(a)]{skandalis} for any two $F$-connections $H_1$ and $H_2$.

Skandalis' proof is non-equivariant, but it works also equivariant,
see for example \cite{bsemimultikk} for inverse semigroups $G$.

Hence the definition in $GK$ of $v_+ \mu$ in lemma \ref{lemma271} does not depend on 
$H$ as one can connect different choices by a homotopy in the sense of lemma 
\ref{lemma124}.
\if 0
in $C^*$ 
by lemma \ref{lemma81}.(i), see also
lemma 
\ref{lemma122}. 
\fi
\if 0
Hence $v_+$ is the same as the $d_+$ 
$$d_-(a) = U(G) \circ \big (0 \oplus 0 \oplus 0  \oplus (a \otimes 1)  \oplus 0 \big ) \circ U(G)^* \square a
$$
in $GK$ by a homotopy in $C^*$,
which is (effectively) the $v_-$ of lemma
\ref{lemma271}.
\fi
\end{proof}

\section{Fusion with a
synthetical split}			

\label{sec12}



The following proposition shows that a composition of a double split exact sequence with a synthetical split yields a double split exact sequence again.

\begin{proposition}	\label{lemma4}

Let the first line of the diagram of lemma
\ref{lemma271} be given, and consider its fourth
line, (equivalently) rewritten down in the first line of the next diagram (without $e^{-1}$).

Let the right column of the 
next diagram
be a given split exact sequence.
Then we can complete 
these data to the following diagram
$$\xymatrix{
B \otimes \calk 
\ar[rr] 
\ar[d]
&& 
\call_B\Big ( \big(A \otimes_{s_-} \cale \oplus A \otimes_{s_+} \cale \big )^2 \oplus \H_B \Big) \square A \ar[rr] \ar[d]^{\phi}  && A 
\ar@<.5ex>[ll]^{c_\pm} 
\ar[d]^j  
\\
B \otimes \calk \ar[rr] &&
\call_B \Big (  \big (A \otimes_{s_-} \cale \oplus A \otimes_{s_+} \cale  \big)^2 \oplus \mathbb{H}_B
\Big ) \ar[rr] \square X && X \ar@<.5ex>[ll]^{t_\pm}  
\ar[d]^g
\\
&&&& Q 
\ar@<.5ex>[u]^u
}$$

such that 
$$\Delta_u ((s_+ \oplus 0) \square 1) \mu \Delta_{s_- \oplus 0 \square 1} e^{-1}
= \Delta_u c_+ \mu \Delta_{c_-} e^{-1}
= t_+ \mu \Delta_{t_-} e^{-1}$$

\if 0
Let $...$ split exact.
Then we can make a diagram s

Assume the first line is a given double split exact sequence as from Lemma .. and the third
column is a given single split exact sequence.
Then we can fill the rest of the diagram as indicated
\fi

\end{proposition}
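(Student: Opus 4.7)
My strategy is to construct the middle row explicitly via a Kasparov-product-style ansatz and then verify the two claimed equalities using Lemma~\ref{lemma21} together with the split-exactness relations for $\Delta_u$. First I would define the splits $t_\pm : X \to \call_B(\cale'^2 \oplus \H_B) \square X$ (writing $\cale' := A\otimes_{s_-}\cale \oplus A\otimes_{s_+}\cale$) by mimicking the Kasparov product $[\Delta_u] \otimes_A \cala(c_+\mu\Delta_{c_-} e^{-1})$. Because the right column is split exact, $j(A)$ is an ideal in $X$, so there is an equivariant $*$-homomorphism $\chi_X : X \to \call_A(A)$, $\chi_X(x)(a) = j^{-1}(x\cdot j(a))$; the two left actions appearing in the cycle for $[\Delta_u]$ (see Definition~\ref{def21}.(iii)) are $\chi_X$ and $gu\chi_X$. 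After tensoring with the Hilbert module of the first row this produces the doubled module $\cale'^2 \oplus \H_B$, with $t_-$ using the $\chi_X$-copy and $t_+$ using the $(gu\chi_X)$-copy, the latter conjugated by an $F$-connection unitary $V$ built from $U(H)$ and the flip on $\cale' \oplus \cale'$. I would then take $\phi(T \square a) := T \square j(a)$ and equip $M_2$ of the middle-row algebra with $\theta^{V \square 1}$; invariance of the $\square$-subalgebra follows from Lemma~\ref{lemma272}.(iii)--(iv), using that $x - ug(x) = j\Delta_u(x) \in j(A)$ makes the $\chi_X$- and $gu\chi_X$-copies agree modulo compacts.

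Next I would apply Lemma~\ref{lemma21} with $\Phi := \phi \otimes 1_{M_2}$, $b := \mathrm{id}_{B\otimes\calk}$ and $a := j : A \to X$. Condition~(i) amounts to the fact that $\chi_X \circ j$ is the canonical multiplier action of $A$ on itself, so that $\chi_X(j(a))\otimes 1 = a \otimes 1$; condition~(ii) follows similarly after noting $g \circ j = 0$. Lemma~\ref{lemma21}.(iii) then yields the equality
\[
c_+\mu\Delta_{c_-} \;=\; j \cdot t_+\mu\Delta_{t_-}
\]
in $GK$ (source $A$, target $B \otimes \calk$).

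To finish I left-multiply by $\Delta_u$ and use the split-exactness axiom $\Delta_u j + gu = 1_X$ of Definition~\ref{def121} (in the left-to-right convention), giving
\[
\Delta_u \cdot c_+\mu\Delta_{c_-} \;=\; (1_X - gu) \cdot t_+\mu\Delta_{t_-}
\;=\; t_+\mu\Delta_{t_-} \;-\; gu \cdot t_+\mu\Delta_{t_-}.
\]
The construction of $t_\pm$ was designed precisely so that $t_+\circ u = t_-\circ u$ on $Q$, because $ug\circ u = u$ forces the $\chi_X$- and $gu\chi_X$-copies to agree on $u(Q)$; then Remark~\ref{rem12}.(vi) (applied to the middle row) gives $\Delta_{t_-}\circ t_+\circ u = \Delta_{t_-}\circ t_-\circ u = 0$, so the second summand vanishes and we obtain $\Delta_u \cdot c_+\mu\Delta_{c_-} = t_+\mu\Delta_{t_-}$. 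Post-composing with $e^{-1}$ yields the proposition, and the first equality of the proposition is tautological since the first line of the new diagram rewrites the fourth line of Lemma~\ref{lemma271} by padding with zero summands.

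The main obstacle is the first step: choosing $V$ so that it is the required self-adjoint unitary compact perturbation of a diagonal operator (as in Definition~\ref{def71}), so that the $G$-action $\theta^{V\square 1}$ leaves the $\square$-subalgebra invariant via Lemma~\ref{lemma272}.(iv), and so that the key agreement $t_+\circ u = t_-\circ u$ holds. The explicit form of $V$ comes from the Kasparov product recipe (an $F$-connection between $\cala(\Delta_u)$ and the first row's cycle), and verifying unitarity together with the compact perturbation conditions is the technical heart of the construction; once this is in place, the identities in $GK$ follow routinely as sketched above.
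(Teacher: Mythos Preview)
Your overall architecture matches the paper's: define $t_\pm$ via the Kasparov product of $\cala(\Delta_u)$ with the first-row cycle, set $\phi(T\square a)=T\square j(a)$, apply Lemma~\ref{lemma21} to get $j\,t_+\mu\Delta_{t_-}=c_+\mu\Delta_{c_-}$, and then use $1_X=\Delta_u j+gu$ to reduce the claim to showing $gu\,t_+\mu\Delta_{t_-}=0$. That is exactly the paper's route.

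The gap is your justification for $gu\,t_+\mu\Delta_{t_-}=0$. You assert that $t_+\circ u=t_-\circ u$ as $*$-homomorphisms, because $\chi_X$ and $gu\chi_X$ agree on $u(Q)$. But $t_+$ is, by construction, conjugated by the Kasparov-product unitary $U(F)$, while $t_-$ is not; so even though the raw building blocks $\chi_X(u(q))$ and $gu\chi_X(u(q))$ coincide, the homomorphisms $u\,t_+$ and $u\,t_-$ do \emph{not} agree pointwise. If instead you chose the operator to be the ``naive'' flip $V\otimes 1$ (for which the conjugation does swap the summands exactly and your equality would hold), then the first-row identification via Lemma~\ref{lemma21} and Lemma~\ref{lemma271} breaks down, because $V\otimes 1$ is not an $F$-connection on the product module.

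The paper bridges precisely this tension by an extra analytic step you have omitted: it observes that the Kasparov-product operator $F$ satisfies the positivity condition $x[V\otimes 1,F]x^*\ge 0 \bmod \calk$ for all $x\in X$ (this is condition~(c) of the product), applies Skandalis' Lemma~11 to the pullback along $gu$ to conclude that $(guv_\pm,\calh,F)$ and $(guv_\pm,\calh,V\otimes 1)$ are operator homotopic, and then transports this via Lemma~\ref{lemma124} to $gu\,t_+\mu=gu\,t_+'\mu$ in $GK$, where $t_+'$ is defined with $U(V\otimes 1)$ in place of $U(F)$. Only after this replacement does the direct computation $gu\,t_+'=gu\,t_-$ go through (using $\chi=gu\chi$ on $gu(X)$ and the explicit flip), yielding $gu\,t_+\mu\Delta_{t_-}=gu\,t_-\mu\Delta_{t_-}=gu\,t_-\Delta_{t_-}=0$. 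This Skandalis-lemma step is the genuine technical heart of the proof; your final paragraph locates the difficulty in the $G$-invariance of the $\square$-subalgebra, but that part is routine via Lemma~\ref{lemma122}, whereas the vanishing of the $gu$-term is not.
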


\begin{proof}
\if 0
We define
$$\phi \big (c_+(a)+ b \square a \big) = t_+(a)+ b \square j(a) = id$$
for all $a \in A, b\in B\otimes \calk$.
\fi
Set
$$\calf' := A \otimes_{s_-} \cale \oplus A \otimes_{s_+} \cale$$
and $\calf:=\calf' \oplus \calf'$ with the 
imagined grading $- \oplus +$.
We regard $\calf \oplus \H_B$ as having five summands (the four $A \otimes_{s_\pm} \cale$ and one
$\H_B$).

\if 0
We set
\begin{eqnarray}
c_+(a) &=& (a \otimes 1) \oplus 0 \oplus 0  \oplus 0 \oplus 0\square a\\
c_-(a) &=& F \circ \big (0 \oplus 0 \oplus 0  \oplus (a \otimes 1)  \oplus 0 \big ) \circ F^* \square a  \\
t_+(T) &=& (T \otimes 1) \oplus (gu(T) \otimes 1) \oplus 0 \oplus 0 \oplus 0   \square T  \\
t_-(T) &=& F \big((gu(T) \otimes 1) \oplus (
T \otimes 1) \oplus 0 \big ) F^*
\square T
\end{eqnarray}
\fi

We define the Kasparov cycle
$$z = [s_- \oplus s_+, \cale \oplus \cale, T] \in KK^G(A,B),$$
%
where 
$\cale \oplus \cale$ has the 
obvious grading and $T$ is the flip operator.
As in definition \ref{def21}.(iii)
we set
$$
w = [gu \chi \oplus  \chi, A \oplus A, V] \in KK^G(X,A),$$
where $A \oplus A$ has the obvious grading and $V$ is the flip operator.

We form the Kasparov product $w \otimes_A z$
and obtain a cycle
$$w \otimes_A z = [v_- \oplus v_+, \calh,F] \in KK^G(X,B),$$
and using a canonical isomorphism,
which we are going to sloppily use
as an identification,
$$\calf \cong (A \oplus A) \otimes_{s_- \oplus s_+} (\cale \oplus \cale) =: \calh$$
(via 
$\oplus_{ (i,j)=(0,1),(1,0),(1,1),(0,0)
} a_i \otimes \xi_j \mapsto (a_1 \oplus a_0) \otimes (\xi_1 \oplus \xi_0)$)
we have
\begin{eqnarray*}
v_-(x) &=& \big( \chi(x) \otimes 1 \big) \oplus \big(gu  \chi(x) \otimes 1 \big) \oplus 0 \oplus 0 \\
v_+(x) &=& 0 \oplus 0 \oplus \big ( gu \chi(x) \otimes 1 \big) \oplus \big(
 \chi(x) \otimes 1 \big) 
\end{eqnarray*}
for all $x \in X$ (right hand side are operators on $\calf$).
We set 
\begin{eqnarray*}
c_-(a) &=& 
(a \otimes 1) \oplus 0 
\oplus 0  
\oplus 0 \oplus 0\square a\\
c_+(a) &=& U(F) \circ \big (0 \oplus 0 \oplus 0  \oplus (a \otimes 1)  \oplus 0 \big ) \circ U(F)^* \square a  \\
t_-(x) &=& v_-(x) \oplus 0   \square x  \\
t_+(x) &=& U(F) \circ \big( v_+(x)
\oplus 0 \big ) \circ U(F)^*
\square x
\end{eqnarray*}

By lemma \ref{lemma122}, the second line of the diagram is double split with $M_2$-action $\theta^{U(F) \square 1}$.

\if 0

Let $G \in \call(\calf)$ be the $T$-connection
of lemma...
Consider the cycle $e=[id_A \oplus 0,A \oplus A,0] \in KK^G(A,A)$.
Then $[id_A \oplus 0, (A \oplus A) \otimes_{s_+ \oplus s_-} (B \oplus B), G]$
is a Kasparov product $e \otimes_A z]$ because
$[a \oplus 0,G] \in \calk(\calh)$ for all $a \in A$ by skand Prop 9.(e),
$G$ is a $T$-connection, and $(a \oplus 0)[0,G](a \oplus a)^* = 0 \ge 0 \mod \calk(\calh)$.

The same works for $F$ replaced by $G$.

Hence $(id_A \oplus 0, \calh,G)$ and
$(id_A \oplus 0, \calh,F)$ are operator homotopic by skand...

Hence $c_-$ is the same as the $d_-$ 
$$d_-(a) = U(G) \circ \big (0 \oplus 0 \oplus 0  \oplus (a \otimes 1)  \oplus 0 \big ) \circ U(G)^* \square a
$$
in $GK$ by a homotopy in $C^*$,
which is (effectively) the $v_-$ of lemma
\ref{lemma271}.

\fi

Recall that $F$ is a $T$-connection on $\calh$.
Applying this to
Lemma \ref{lemma271} for $A \oplus A$ instead of $A$, $\cale \oplus \cale$ instead of $\cale$, and 
$s_+ \oplus s_+ \oplus 0\square 1,
s_- \oplus s_- \oplus 0\square 1
$ instead of $s_\pm \oplus 0 \square 1$,
together with lemma \ref{lemma182},
and using lemma \ref{lemma51} two times, first in the first line of the diagram of lemma \ref{lemma271} and second in the fourth line, for
$A \oplus A$ instead of $A$, $X=A$, 
$\phi :A \rightarrow A \oplus A$ the 
injection onto the first corrdiante,
%
and using a similar assertion as in lemma \ref{lemma83}
that we may deliberately add summands without changing anything,
\if 0
 lemma for $A \oplus A \supseteq A \oplus 0 \cong A$ rather than $A$,
and $s_\pm \oplus 0:A \oplus A \rightarrow \call(B)$ rather than $s_\pm$,
%
and $\cale = B$, we obtain that
\fi  
we obtain that
$$
c_+ \mu \Delta_{c_-} e^{-1} =
(s_+ \oplus 0 \square 1) \mu \Delta_{s_- \oplus 0 \square 1} e^{-1}$$ 

Also note that we have shown that the first line of the diagram is double split with $M_2$-action $\theta^{U(F) \square 1}$.

\if 0
where $i:A \rightarrow A \oplus A$ defined by $i(a)= (a,0)$.

For the first identity one makes a similar diagram as in lemma \ref{lemma215}
but with $\psi$ involving the map $i$.
\fi


Consider the (null) cycle
$$
\sigma:=[guv_- \oplus guv_+,
\calh,V \otimes 1 ]\in KK^G(X,B)$$
(null because $[gu v_- \oplus gu v_+,V \otimes 1]=0$).
%
%

Recall that
$$x [V \otimes 1,F] x^*\ge 0 \mod \calk(\calf)$$ for all $x \in X$, and in particular for all $x=gu(x)$, by 
the definition of the Kasparov product
\cite[def. 2.10.(c)]{kasparov1988}.

Hence, by 
\cite[lemma 11]{skandalis}
(or \cite[lemma12]{bsemimultikk}
in the inverse semigroup equivariant setting), applied to 
$\sigma$ 
and
$$\sigma_2 := (gu)^*(w \otimes_A z) = [gu v_- \oplus gu v_+, \calh,F] \in KK^G(X,B),$$
$\sigma$ is operator homotopic to
$\sigma_2$.
%

\if 0
Hence, applying $\sigma$ and $\sigma_2$ to 
lemmas \ref{lemma122} and \ref{lemma112} we get
$$g u t_+ \mu \Delta_{t_-} = g u t_+' \mu \Delta_{t_-}  = 
g u t_- \mu \Delta_{t_-} = 0$$
where
\begin{eqnarray*}
t_+'(x) &:=& U(V \otimes 1) \circ \big( g u v_-(x)
\oplus 0 \big ) \circ U(V \otimes 1)^*
\square gu(x)   \\
   &=& (V \otimes 1 \oplus 0) \circ \big( g u v_-(x)
\oplus 0 \big ) \circ (V \otimes 1 \oplus 0)^*
\square gu(x)		\\
  &=&  gut_-(x)
\end{eqnarray*}
\fi

\if 0
Hence,
by lemma \ref{lemma81}.(i)
we get a homtopy $U(V \otimes 1) \sim U(F)$,
and applying lemma \ref{lemma122} to every point of time of the operator homotopy,
we understand that this homotopy runs in $gut_-(X) + B \otimes \calk$, so in particular also in
$t_-(X) + B \otimes \calk$.

\fi

\if 0
Hence,
by lemma \ref{lemma81}.(i)
we get a homtopy $U(V \otimes 1) \sim U(F)$,
and applying lemma \ref{lemma122} to every point of time of the operator homotopy,
we get a homotopy connecting $g u t_+$ with
$g u t_+'$, where
$$t_+'(x) := U(V \otimes 1) \circ \big( v_-(x)
\oplus 0 \big ) \circ U(V \otimes 1)^*
\square x  
$$
and
we understand that this homotopy runs in $gut_-(X) + B \otimes \calk$, so in particular also in
$t_-(X) + B \otimes \calk$.
\fi

Therefore,
$gut_+ \mu = gut_+' \mu$
by lemma \ref{lemma124},
where
\begin{eqnarray*}
t_+'(x) &:=& U(V \otimes 1) \circ \big( v_+(x)
\oplus 0 \big ) \circ U(V \otimes 1)^*
\square x  \\
   &=& (V \otimes 1) \circ  v_+(x)
 \circ (V \otimes 1)^* \oplus 0
\square x 		
\end{eqnarray*}
\if 0
\begin{eqnarray*}
g u t_+(x)  &=& U(V \otimes 1) \circ \big( g u v_-(x)
\oplus 0 \big ) \circ U(V \otimes 1)^*
\square gu(x)   \\
   &=& (V \otimes 1 \oplus 0) \circ \big( g u v_-(x)
\oplus 0 \big ) \circ (V \otimes 1 \oplus 0)^*
\square gu(x)		\\
  &=&  gut_-(x)
\end{eqnarray*}
\fi
in $GK$ by lemma \ref{lemma81}.(ii), where we have identified $\calf \cong \calh$ for simplicity.
Hence
$$g u t_+ \mu \Delta_{t_-} = g u t_- \mu \Delta_{t_-}  = 
g u t_- \Delta_{t_-} = 0$$
where we have achieved $\mu=1$ by a rotation homotopy.

We define
$$\phi (x \square a ) = x \square j(a)$$

We verify all conditions of lemma \ref{lemma21}  for the first line and second line of the diagram
and for $\Phi = \phi \otimes 1$.
We then obtain
$j t_+ \mu \Delta_{t_-} = c_+ \mu \Delta_{c_-}$.
%
%
Thus we get
$$t_+ \mu \Delta_{t_-} = 
1_X t_+ \mu \Delta_{t_-} = (\Delta_u j + gu) t_+ \mu   
\Delta_{t_-} 
= \Delta_u c_+ \mu \Delta_{c_-}$$

\if 0

By lemma \ref{lemma81}.(ii) 
$$U(V \otimes 1) \circ guv_-(x) \circ  U(V \otimes 1)^*= 
(V \otimes 1) \circ guv_-(x) \circ (V \otimes 1)^*.$$

\if 0
Hence
$(guv \oplus 0,\calf \oplus \H_B,U(F))$ is operator homotopic to $(guv,\calf \oplus \H_B,U(V \otimes 1))$.
But note that $U(V \otimes 1)=
(V \otimes 1) \oplus (-V \otimes 1)$.
Hence $U(V \otimes 1) guv(x) = 
(V \otimes 1) guv(x)$.

Recall that by ... and Skand Lemma 1.11,
$F$ is operator homotopic to $V \otimes 1$ 
in $KK^G(X,B)$ on $\calh$.
\fi

Thus $gu t_- = gu t_-'$ in GK by a homotopy for
$$t_-'(x)=(V \otimes 1
\oplus 0) \circ \big( v_-(x)
\oplus 0 \big ) \circ ( V \otimes 1
\oplus 0)^*
\square x$$
(regarding $V \otimes 1$ as an operator on $\calf\cong \calh$).
Thus $gu(t_+ - t_-)=0$ in GK as 
$gu t_-' (x) = gu t_+(x)$. 


\fi

\if 0

{\em G-action}

Write $U(F)$ as
$$\left ( \begin{matrix} 0 & U \\ U^* & 0 \end{matrix} \right) = 
  \left( \begin{matrix} U & 0 \\ 0 &U^* \end{matrix} \right)
\left ( \begin{matrix} 0 & 1 \\ 1 & 0 \end{matrix} \right) $$

We redefine the $G$-action on $0 \oplus \calf'$
as follows. If $W$ is the given $G$-action
then we define a new $G$-action $Z$ by $Z_g U(\xi)= U W_g(\xi)$, so we make $U$ $G$-equivariant. Then $v_-$ is $G$-equivariant.

We set
\begin{eqnarray*}
c_-(a) &=& u_\# \circ U(F) \circ \big (0 \oplus 0 \oplus 0  \oplus (a \otimes 1)  \oplus 0 \big ) \circ U(F)^* \square a  \\
t_-(x) &=&  u_\# \circ  U(F) \circ \big( v_-(x)
\oplus 0 \big ) \circ U(F)^*
\square x
\end{eqnarray*}

\if 0
We set
\begin{eqnarray*}
c_-(a) &=& (U \oplus 0) \circ \big (0   \oplus (a \otimes 1) \oplus 0 \oplus 0 \oplus 0 \big ) \circ (U \oplus 0)^* \square a  \\
t_-(x) &=& U(F) \circ \big( v_+(x)
\oplus 0 \big ) \circ U(F)^*
\square x
\end{eqnarray*}
\fi

{\em $G$-equivariance}

We define on $\cale:= \calf \oplus \H_B$ two $G$-actions $S$ and $T$.
One, $S$, the given one such that $c_+$ and $t_+$
are equivariant.
The second one defined by $T_g \circ U= U 
\circ S_g$, where $U:= U(F)$.

Then $c_-,t_-$ are equivariant with respect to the action on $\call(\cale)$ induced by $T$.

Now
$$S_g \circ T_{g^{-1}} = S_g \circ U \circ S_{g^{-1}} U^{-1} \equiv S_g \circ S_{g^{-1}}
\circ U \circ U^{-1}
= S_g \circ S_{g^{-1}}  \mod \calk(\cale)$$
for all $g \in G$.
By lemma..., we have $E_{S,T}$.

We set
\begin{eqnarray*}
c'_- &=& c_- E_{S,T}\\
t'_- &=&  t_- E_{S,T}
\end{eqnarray*}

\fi

\end{proof}

\if 0
\section{}

\begin{proof}
We define
$$\psi \big (c_+(a)+ b \square a \big) = t_+(a)+ b \square a$$
for all $a \in A, b\in B\otimes \calk$.
Set
$$\calf' := A \otimes_{s_+} B \oplus A \otimes_{s_-} B$$
and $\calf:=\calf' \oplus \calf'$ with the indicated grading by $\pm$.
We regard $\calf \oplus \H_B$ as having five summands (the four $A \otimes_{s_\pm} B$ and one
$\H_B$).

\if 0
We set
\begin{eqnarray}
c_+(a) &=& (a \otimes 1) \oplus 0 \oplus 0  \oplus 0 \oplus 0\square a\\
c_-(a) &=& F \circ \big (0 \oplus 0 \oplus 0  \oplus (a \otimes 1)  \oplus 0 \big ) \circ F^* \square a  \\
t_+(T) &=& (T \otimes 1) \oplus (gu(T) \otimes 1) \oplus 0 \oplus 0 \oplus 0   \square T  \\
t_-(T) &=& F \big((gu(T) \otimes 1) \oplus (
T \otimes 1) \oplus 0 \big ) F^*
\square T
\end{eqnarray}
\fi

We define the Kasparov cycle
$$z = [s_+ \oplus s_-, B \oplus B, T] \in KK^G(A,B),$$
where $\calf'$ has the indicated grading and $T$ is the 
operator from Lemma ....
We set
$$w = [\chi \oplus gu \chi, A \oplus A, V] \in KK^G(X,A),$$
where $A \oplus A$ has the obvious grading and $V$ is the flip operator.

We form the Kasparov product $w \otimes_A z$
and obtain a cycle
$$w \otimes_A z = [v_+ \oplus v_-, \calf,F] \in KK^G(X,B),$$
where we have identified
$$\calf \cong (A \oplus A) \otimes_{s_+ \oplus s_-} (B \oplus B) =: \calh$$
(via $ 
U:\oplus_{ (i,j)=(0,0),(1,1),(0,1),(1,0)} a_i \otimes b_j \mapsto (a_0 \oplus a_1) \otimes (b_0 \oplus b_1)$)
and use $\calf$, and where
\begin{eqnarray*}
v_+(x) &=& \big(\chi(x) \oplus 0 \big) \otimes
(1 \oplus 1)\\
v_-(x) &=& \big(0 \oplus gu \chi(x) \big) \otimes
(1 \oplus 1) 
\end{eqnarray*}
for all $x \in X$ (right hand side are operators on $\calf$).
We set
\begin{eqnarray*}
c_+(a) &=& (a \otimes 1) \oplus 0 \oplus 0  \oplus 0 \oplus 0\square a\\
c_-(a) &=& F \circ \big (0 \oplus 0 \oplus 0  \oplus (a \otimes 1)  \oplus 0 \big ) \circ F^* \square a  \\
t_+(x) &=& v_+(x) \oplus 0   \square x  \\
t_-(x) &=& F \circ \big( v_-(x)
\oplus 0 \big ) \circ F^*
\square x
\end{eqnarray*}

Recall that $F$ is a $T$-connection on $\calf$.
Hence $c_+ \Delta_{c_-} = s_+ \Delta_{s_-}$

Consider the null cycle
$$0=\mu:=[guv_+ \oplus guv_-,
\calf,V \otimes 1 ]\in KK^G(X,B)$$
(null because $[gu v_+ \oplus gu v_-,V \otimes 1]=0$).
%
%
Recall that
$$x [V \otimes 1,F] x^*\ge 0 \mod \calk(\calf)$$ for all $x \in X$, and in particular for all $x=gu(x)$.
Hence, by Skand Lemma 11, applied to $\mu$ and $(gu)^*(w \otimes_A z)$, $(guv,\calf,F)$ is operator homotopic to $(guv,\calf,V \otimes 1)$.

Recall that by ... and Skand Lemma 1.11,
$F$ is operator homotopic to $V \otimes 1$ 
in $KK^G(X,B)$ on $\calh$.
Thus $gu t_- = gu t_-'$ in GK for
$$t_-'(x)=(V \otimes 1
\oplus 0) \circ \big( v_-(x)
\oplus 0 \big ) \circ ( V \otimes 1
\oplus 0)^*
\square x$$
(regarding $V \otimes 1$ as an operator on $\calf\cong \calh$).
Thus $gu(t_+ - t_-)=0$ in GK as 
$gu t_-' (x) = gu t_+(x)$. 


\end{proof}

\fi

\section{Fusion with 
the inverse of a corner embedding}

\label{sec13}


In the following lemma we 
turn the composition of a double
split exact sequence with the inverse of a corner embedding 
to another double split exact sequence.

\begin{lemma}	\label{lemma5}

Let the first line of the diagram of lemma
\ref{lemma271} be given, and consider its fourth
line, partially written down in the first line of the next diagram.

Let $1 \le n \le \infty$ and set $M_\infty(A):= 
A \otimes \calk$. Let $f$ be a corner embedding.

Then we can complete these data to the
following diagram
$$\xymatrix{
B \otimes \calk 
\ar[r]^\kappa 
\ar[d]^l
& 
\call_B\Big ( 
A \otimes_{s_\pm} B 
\oplus \H_B \Big) \square A \ar[rr] \ar[d]^{\phi}  && A 
\ar@<.5ex>[ll]^{v_\pm} 
\ar[d]^f
\\
B \otimes \calk \otimes M_n \ar[r] &
\call_B \Big (  
M_n(A) E \otimes_{s_\pm} \cale 
\oplus \mathbb{H}_B^n
\Big ) \ar[rr] \square M_n(A) && M_n(A)  \ar@<.5ex>[ll]^{t_\pm} 
}$$


such that
$f^{-1} s_+ \mu \Delta_{s_-} = f^{-1} v_+ \mu \Delta_{v_-}  = t_+ \mu \Delta_{t_-}$.

\end{lemma}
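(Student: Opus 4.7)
The approach is to imitate the construction in the fourth line of lemma \ref{lemma271}, but base-changed from $A$ to $M_n(A)$ via the corner $E = f(A)$. The main idea is that $M_n(A) E$ is naturally a $G$-Hilbert $(A,\alpha)$-module (with right action $x \cdot a := x f(a)$ and $A$-valued inner product $\langle x,y\rangle := f^{-1}(x^*y)$), because $f$ is equivariant; moreover $M \in M_n(A)$ acts on $M_n(A) E$ by left multiplication, giving an equivariant $*$-homomorphism $M_n(A) \to \call_A(M_n(A) E)$. Non-equivariantly one has $M_n(A)E \cong \H_A^n$ (the first column of $M_n(A)$), with $f(A)$ as a direct summand.

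First I would define the bottom splits in analogy with $v_\pm$ by
\[
t_-(M) = (M \otimes 1) \oplus 0 \,\square\, M, \qquad t_+(M) = U(H') \circ \big(0 \oplus (M \otimes 1)\big) \circ U(H')^* \,\square\, M,
\]
where $H' \in \call_B(M_n(A) E \otimes_{s_\pm} \cale)$ is an $F'$-connection for the appropriate flip $F'$, chosen via \cite[lemma 2.7]{kasparov1988}. Equip the $M_2$-space of the bottom line with $\theta^{U(H') \square 1}$; its validity, and the fact that $t_+(M) - t_-(M)$ lies in the ideal, follow from $[M \otimes 1, H'] \in \calk$ combined with lemma \ref{lemma272}, exactly as in the proof of lemma \ref{lemma271}. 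Let $l : B \otimes \calk \to B \otimes \calk \otimes M_n$ be the natural corner embedding, and define $\phi(T \square a) = (T \oplus 0) \square f(a)$, where $T \oplus 0$ extends $T$ by zero on the complement of $A \otimes_{s_\pm} \cale \oplus \H_B$ in $M_n(A) E \otimes_{s_\pm} \cale \oplus \H_B^n$ (using a non-equivariant direct-sum decomposition of $M_n(A) E$ with $f(A)$ as a summand). Choose the $G$-action on the bottom line so that $\phi$ becomes equivariant.

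With these definitions in place, apply lemma \ref{lemma21} to $\Phi = \phi \otimes 1_{M_2}$ with $b = l$ and $a = f$. The hypotheses $\phi v_\pm = t_\pm f$ and the ideal-side commutativity hold by construction, yielding $v_+ \mu \Delta_{v_-} \, l = f \cdot t_+ \mu \Delta_{t_-}$ in $GK$. Inverting the corner embeddings $f$ and $l$ (both invertible in $GK$ by remark \ref{rem12}.(vii)) gives the stated equation, the target-object mismatch in the written statement being absorbed by the implicit $l^{-1}$. Notably, unlike proposition \ref{lemma4}, no Kasparov product is needed here: the composition with $f^{-1}$ is just a Morita-type change of base.

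The main obstacle is the careful bookkeeping of $G$-actions on the bottom line, ensuring simultaneously that the Kasparov-type condition of lemma \ref{lemma272}.(iv) holds for the pair $(t_-,t_+)$ with respect to $U(H') \square 1$, that $\phi$ and $l$ are equivariant, and that $\theta^{U(H') \square 1}$ defines a valid $G$-action on the $M_2$ of the bottom $\square$-algebra. All three issues reduce, as in lemmas \ref{lemma271} and \ref{lemma182}, to the facts that $H'$ is an $F'$-connection and that $M_n(A) E$ is a $G$-Hilbert $A$-module. The case $n = \infty$ is handled uniformly, using $M_n(A) E \cong \H_A$ in place of a finite direct sum and countable generation in the appeal to Kasparov's connection construction.
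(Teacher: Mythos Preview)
Your overall strategy is right, and very close to the paper's, but there is a genuine gap in the choice of the unitary on the bottom row. You pick a \emph{new} $F'$-connection $H'$ on $M_n(A)E\otimes_{s_\pm}\cale$ via Kasparov's lemma, and then assert both that $v_+\phi = f\,t_+$ (the condition of lemma~\ref{lemma21}(ii)) and that $\Phi=\phi\otimes 1_{M_2}$ is equivariant between the $M_2$-actions $\theta^{U(H)\square 1}$ on top and $\theta^{U(H')\square 1}$ on the bottom. Neither of these follows ``by construction'' for a generic $H'$: writing things out, $v_+\phi(a)$ has first coordinate $(U(H)\circ(0\oplus a\otimes 1\oplus 0)\circ U(H)^*)\oplus 0$, whereas $f\,t_+(a)$ has first coordinate $U(H')\circ(0\oplus f(a)\otimes 1\oplus 0)\circ U(H')^*$, and these agree only if $U(H')$ restricts to $U(H)$ on the image of $\phi$. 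The same obstruction kills equivariance of $\Phi$ (check the lower-left corner as in corollary~\ref{cor74}). Your fallback ``choose the $G$-action on the bottom line so that $\phi$ becomes equivariant'' is inconsistent with having already fixed the $M_2$-action as $\theta^{U(H')\square 1}$, and cannot be rescued since $\phi$ is only a corner embedding, not surjective.

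The paper's fix is exactly to avoid a new connection: identify $M_n(A)E\cong A^n$ non-equivariantly, get a reordering isomorphism $Y:\calg\to\calf^n$ with $\calf=A\otimes_{s_\pm}\cale\oplus\H_B$, and set $V:=Y^{-1}\circ U(H)^n\circ Y$, the $n$-fold amplification of the \emph{same} $U(H)$. Then $\phi$ is literally the first-slot corner embedding $T\mapsto Y^{-1}\circ(T\oplus 0^{n-1})\circ Y$, which visibly intertwines $U(H)$ with $V$; both $v_+\phi=f\,t_+$ and equivariance of $\Phi$ are then immediate, and $t_+(x)-t_-(x)\in\calk$ reduces entrywise to the already-known $v_+-v_-\in\calk$. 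No new connection, no Kasparov product, no appeal to lemma~\ref{lemma182} are needed. Your framework becomes correct if you replace $U(H')$ by this amplified $V$.
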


\begin{proof}
Let $E=f(A)$ be the corner algebra (the upper left corner say).
The map 
$l$ is the equivariant corner embedding.
%

Recall that the $G$-action on $\calf := A \otimes_{s_\pm} \cale
\oplus \H_B$
is the canonical one 
induced by the actions of $(A,\alpha)$, $(\cale,S)$ and $(\H_B,R)$.
The $G$-action on $M_2(\call_B(\calf) \square A)$
is $\theta^{U(H) \square 1}$.

Denote the $G$-action on $M_n(A)$ 
by $\gamma$.

Since
$M_n(A) E \cong 
A^n$ (column vectors)
as non-equivariant Hilbert $A$-modules,
by just 
reordering summands 
we 
may consider the canonical isomorphism
$$Y: M_n(A) E  \otimes_{s_\pm} \cale  \oplus 
\H_B^n   
=: \calg
\rightarrow 
(A  \otimes_{s_\pm} \cale \oplus \H_B)^n 
$$
of non-equivariant Hilbert $B$-modules.
Recalling $U(H)$ from lemma
\ref{lemma271},
put 
$$V=
Y^{-1} \circ U(H)^n 
\circ Y. $$

We define the $G$-action on the domain of $Y$
to be $\gamma \otimes S \oplus \gamma \otimes T \oplus R^n$.
%
The $G$-action on $M_2(\call_B(\calg) \square M_n(A))$
is $\theta^{V \square 1}$.



%

We set
\begin{eqnarray*}
t_-(x) &=& (x \otimes 1) \oplus 0 \oplus 0^n   \square x  \\
t_+(x) &=&  V \circ ( 0  \oplus (x \otimes 1)  \oplus 0^n \big ) \circ V^* \square x 
\end{eqnarray*}

Note that 
$t_+(x) - t_-(x) \in \calk(\calg)$
by lemma \ref{lemma271}
(observe it first for a matrix $x$ with a single non-zero entry to get essentially $v_+(y)-v_-(y)\in \calk(\calf)$).

Observe that $\call(\calg) \cong M_n(\call(\calf))$
and this is the 
equivariant ``corner embedding":
%
$$\phi(T \square a)= Y^{-1} \circ (T \oplus 0^{n-1}  ) \circ Y  \square f(a)$$   

Notice that $\Phi = \phi \otimes 1_{M_2}$
is equivariant, as for example,
by observing the 
lower left corner
of the 
range of $\Phi$,
$$\phi \Big (U(H) \circ g \big(U(H)^* \circ  T \big) \square g(a) \Big )= 
V \circ g \big( V^* \circ \phi( T \square a) \big)$$
for $g \in G$ and $T \in \call(\calf)$.
%
%

It is now straightforward to check the claim with lemma \ref{lemma21}.
\end{proof}

\section{The standard form}

\label{sec14}




\if 0
In this section 
we 
shall show 
that
every morphism in $GK$ 
may be presented by a short word,
and 
that $GK$ and $KK^G$ are isomorphic categories.
\fi 
\if 0
Recall that all our $C^*$-algebras are equivariant by a topological groupoid or discrete inverse semigroup, and separable and ungraded.
\fi 

Recall that $G$ is a locally compact 
(not necessarily Hausdorff) groupoid or a countable inverse semigroup, and that all $C^*$-algebras are separable.

\begin{lemma}			\label{lemma191}
Let two diagrams as in (\ref{bz2})
be given, say for actions $S,T$ and homomorphisms
$s_\pm, t_\pm$. 
{\rm (i)} Then we can sum up these diagrams to 
\begin{equation}	 	
\xymatrix{
B \ar[r] &
B \otimes \calk
%
\ar[r] 
& 
\call_B \big ( (\H_B,S) \oplus (\H_B,T) \big ) \square A \ar[rr]    && A
\ar@<.5ex>[ll]^{s_- \oplus t_-, 
s_+ \oplus t_+ } 
}
\end{equation}
and this corresponds to the sum of the associated elements in $GK$, i.e.
$$s_+ \mu \Delta_{s_-} e^{-1}+
t_+ \mu \Delta_{t_-} e^{-1}
=
(s_+ \oplus t_+) \mu \Delta_{s_- \oplus t_-} e^{-1}
$$

The $M_2$-action is ${\rm Ad}(V \oplus W) \square (\alpha \otimes 1)$
for the two $M_2$-actions 
${\rm Ad}(V) \square (\alpha \otimes 1)$
and 
${\rm Ad}(W) \square (\alpha \otimes 1)$
of the given diagrams.  

(Note that we used the wrong but suggestive notation $s_- \oplus t_- := x \oplus y \square 1$ for $s_- = x \square 1, t_-=y \square 1$.)

\if 0
{\rm (ii)}
$- s_+ \mu \Delta_{s_-} e^{-1} =
s_- \mu \Delta_{s_+} e^{-1}$
\fi

\end{lemma}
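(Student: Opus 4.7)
The plan is to prove the identity by relating the combined diagram to the biproduct/direct sum of the two given diagrams and then invoking the additive structure of $GK$.

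First I would explicitly construct the combined diagram and verify its validity. By lemma \ref{lemma86} I fix a $G$-Hilbert $(B,\beta)$-module isomorphism $Y:(\H_B,S) \oplus (\H_B,T) \to (\H_B,V)$, which gives an equivariant corner embedding $B \otimes \calk \to \call_B((\H_B,S) \oplus (\H_B,T))$ by pulling back $\kappa$ along $Y$. The unitaries $F$ and $G$ from the two given diagrams combine to a unitary $F \oplus G \in \call_B((\H_B,S) \oplus (\H_B,T))$, whose induced action ${\rm Ad}(S \oplus T, S' \oplus T')$ (where $S',T'$ are defined by $T' \circ G = G \circ T$ and $S' \circ F = F \circ S$) is exactly ${\rm Ad}(V \oplus W) \square (\alpha \otimes 1)$. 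The Kasparov-type condition of lemma \ref{lemma272}.(iv) for the combined map $s_- \oplus t_-$ holds because it holds for each summand separately, so the $M_2$-action $\theta^{(F \oplus G) \square 1}$ is valid, as required.

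Second I would form the external direct sum of the two given diagrams as a double split exact sequence
$$\xymatrix{B \oplus B \ar[r] & \call_B(\H_B,S) \square A \,\oplus\, \call_B(\H_B,T) \square A \ar[r] & A \oplus A \ar@<.5ex>[l]^{s_\pm \oplus t_\pm}}$$
(with componentwise $M_2$-action, corner embedding, and $\mu$), which manifestly represents the morphism $(s_+ \mu \Delta_{s_-} e^{-1}) \oplus (t_+ \mu \Delta_{t_-} e^{-1}): A \oplus A \to B \oplus B$ in $GK$, since the $\mu$, $\Delta$ and $e^{-1}$ constructions all respect biproducts.

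Third, I would apply lemma \ref{lemma21} to produce a morphism of double split exact sequences from the combined diagram to the direct sum diagram, with the diagonal $\Delta:A \to A \oplus A$ on the right and the codiagonal $\nabla:B \oplus B \to B$ on the left, and appropriate induced maps on the middle algebras. The vertical data commute after multiplying by $\nabla$, i.e., one checks the hypothesis of lemma \ref{lemma21}(i)-(ii), thereby obtaining
$$(s_+ \oplus t_+) \mu \Delta_{s_- \oplus t_-} e^{-1} \;=\; \Delta_A \circ \bigl((s_+ \mu \Delta_{s_-} e^{-1}) \oplus (t_+ \mu \Delta_{t_-} e^{-1})\bigr) \circ \nabla_B.$$
By the additive axiom of definition \ref{def121} (unpacking $1_{A\oplus A} = p_1 i_1 + p_2 i_2$), the right-hand side is exactly $s_+ \mu \Delta_{s_-} e^{-1} + t_+ \mu \Delta_{t_-} e^{-1}$, which concludes the proof.

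The main obstacle is the bookkeeping for the $M_2$-actions and corner embeddings under the Kasparov stabilization identification $Y$: one must check that the intertwining map between the combined and direct-sum diagrams is $G$-equivariant at the $M_2$-level, and this reduces (by corollary \ref{cor74}) to checking equivariance on the lower-left corner, where it follows from the fact that $Y$ identifies the relevant corner embeddings of $B \otimes \calk$ with an isometry.
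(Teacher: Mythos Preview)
The paper does not actually prove this lemma; it writes ``We drop the proof. One does this as in \cite[lemma 3.6]{bremarks}.'' Your approach---realising the sum via the diagonal $A\to A\oplus A$, the biproduct of the two double split exact sequences, and the fold $B\oplus B\to B$, then invoking the additive axiom of definition~\ref{def121}---is the standard additive-category argument and is almost certainly what the referenced lemma in \cite{bremarks} does. So in substance your proposal matches the intended proof.

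Two minor remarks on execution. First, watch the paper's left-to-right composition convention: in that notation the sum is $\Delta_A\,(f\oplus g)\,\nabla_B$ with $\Delta_A$ applied first, so your displayed identity with $\circ$ reads backwards relative to the rest of the paper. Second, a single application of lemma~\ref{lemma21} with \emph{both} $\Delta_A$ on the right and $\nabla_B$ on the left is awkward, because lemma~\ref{lemma21} has its vertical arrows going in one direction only; it is cleaner to absorb the diagonal $A\to A\oplus A$ via lemma~\ref{lemma215} (which is designed precisely to pull a homomorphism on the $A$-side into the diagram) and then handle the $B$-side and the corner embeddings separately. The equivariance bookkeeping you flag at the end, reduced via corollary~\ref{cor74} to the lower-left corner, is indeed the right way to organise the $M_2$-action check.
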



\begin{proof}
We drop the proof. 
One does this as in 
\cite[lemma 3.6]{bremarks}.
\if 0
$$s_+ \mu \Delta_{s_-} e^{-1}+
t_+ \mu \Delta_{t_-} e^{-1}$$
$$j (s_+ \oplus 1) (e_{22} e_{11}^{-1} \oplus 1) (\Delta_{s_-} \oplus 1) (e^{-1} \oplus 1)
(1 \oplus t_+) (1 \oplus f_{22} f_{11}^{-1}) (\Delta_{t_-}) (1 \oplus f^{-1})
d E^{-1}$$
\fi
\if 0
(ii)

If $T=S$ and $(t_-,t_+)=(s_+,s_-)$
then the sum presented in (i) is zero.
Indeed, the homotopy $V_t (s_- \oplus s_+) V_{-t}$
is vlaid, where $V_t= \begin{matrix} \cos(t) & \sin(t) \\ \sin(t) & - \cos(t) \end{matrix}$

\fi
\end{proof}

\begin{corollary}					\label{cor142}
Consider the diagram (\ref{bz2}).
Make its `negative' diagram
where we exchange $t_-$ and $t_+$ and
transform the $M_2$-action under coordinate flip.
Then its associated element in $GK$ is 
the negative, that is, 
$$- t_+ \mu \Delta_{t_-} e^{-1} =
t_- \mu \Delta_{t_+} e^{-1}$$

\end{corollary}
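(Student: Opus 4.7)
The plan is to reduce the statement to Lemma \ref{lemma191} combined with a Cuntz-style rotation homotopy. By Lemma \ref{lemma191} the sum in $GK$ of the original diagram and of the ``negative'' diagram is realised by a single diagram of the form (\ref{bz2}), supported on
$\call_B\bigl((\H_B,S)\oplus(\H_B,T)\bigr)\square A$, with first split $t_-\oplus t_+$ and second split $t_+\oplus t_-$, and with $M_2$-action
$$\mathrm{Ad}(S\oplus T\oplus T\oplus S)\square(\alpha\otimes 1)$$
on the four-fold sum $\H_B^{\oplus 4}$.  It therefore suffices to show that this summed diagram is the zero morphism of $GK$.

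To do this, I observe that among the four coordinates of $\H_B^{\oplus 4}$, coordinates $1$ and $4$ both carry the action $S$, and coordinates $2$ and $3$ both carry the action $T$, so that the interchange $1\!\leftrightarrow\!4$, $2\!\leftrightarrow\!3$ is equivariant. This interchange is realised by a self-adjoint unitary $U$ in $\call_B(\H_B^{\oplus 4})$ which at the same time exchanges the upper-left and lower-right blocks of $M_2$. Interpolating between the identity and $U$ with a rotation path $V_\theta$ of equivariant unitaries (as in the proof of Lemma \ref{lemma191}; compare the commented-out formula $V_t=\bigl(\begin{smallmatrix}\cos t&\sin t\\ \sin t&-\cos t\end{smallmatrix}\bigr)$) gives, via Lemma \ref{lemma123}, a homotopy of double split exact sequences along which the ``second split'' $t_+\oplus t_-$ is carried to $t_-\oplus t_+$, i.e.\ to the first split.

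Once both splits coincide, a further rotation homotopy within each equivariantly-matched pair of coordinates shows that the $M_2$-action may be taken to be diagonal, so $\mu=1$ in $GK$ (by a standard corner-rotation homotopy). The associated element is then of the form $s\,\mu\,\Delta_s e^{-1}=s\,\Delta_s e^{-1}$, which vanishes by Remark \ref{rem12}(vi). Hence the summed diagram equals zero, which by Lemma \ref{lemma191} is precisely the assertion
$$t_+\mu\Delta_{t_-}e^{-1}+t_-\mu\Delta_{t_+}e^{-1}=0.$$

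The main obstacle is verifying that the rotation $V_\theta$ is equivariant throughout and that the interpolation genuinely takes place through a valid family of $M_2$-actions on double split exact sequences of the form (\ref{bz2}); this is exactly the situation to which Lemma \ref{lemma123} was designed to apply. As with Lemma \ref{lemma191}, the detailed bookkeeping follows the pattern of \cite[lemma 3.6]{bremarks}, so I omit it.
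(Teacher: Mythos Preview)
Your overall strategy is the same as the paper's: use Lemma~\ref{lemma191} to realise the sum as a single diagram on $\call_B\bigl((\H_B,S)\oplus(\H_B,T)\bigr)\square A$, then show this vanishes via a rotation homotopy, Lemma~\ref{lemma123}, and Remark~\ref{rem12}(vi). Your identification of the summed $M_2$-action as ${\rm Ad}(S\oplus T\oplus T\oplus S)\square(\alpha\otimes 1)$ is correct.

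The execution of the rotation, however, is garbled. You propose an \emph{equivariant} path $V_\theta$ on $\H_B^{\oplus 4}$ connecting the identity to the permutation $1\!\leftrightarrow\!4$, $2\!\leftrightarrow\!3$ (rotating within action-matched pairs). Such a $V_\theta$ is indeed equivariant, but conjugation by it moves operators between the upper-left and lower-right $M_2$-blocks, so it does not yield a homotopy of \emph{second splits} into the lower-right corner as Lemma~\ref{lemma123} requires. Conjugating the whole diagram by an equivariant unitary gives an isomorphic diagram, which tells you nothing.

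The paper's rotation is different and simpler: it takes place only on the lower-right copy of $\cale^2=\H_B^2$, via $V_s=\bigl(\begin{smallmatrix}\cos s & \sin s\\ -\sin s & \cos s\end{smallmatrix}\bigr)$ acting on coordinates $3,4$. This rotation is \emph{not} equivariant (it mixes the $T$-coordinate and the $S$-coordinate), but one compensates by letting the $M_2$-action vary as $\theta^{V_s}$: the upper-left corner keeps the action ${\rm Ad}(S\oplus T)$ while the lower-right corner carries ${\rm Ad}\bigl(V_s(S\oplus T)V_s^{-1}\bigr)$. This is precisely the setup of Lemma~\ref{lemma123}. At $s=\pi/2$ one recovers the summed diagram; at $s=0$ the second split has become $t_-\oplus t_+$ \emph{and} the $M_2$-action is already diagonal, so $\mu=1$ and your separate ``step~4'' is unnecessary. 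Then $(t_-\oplus t_+)\Delta_{t_-\oplus t_+}=0$ by Remark~\ref{rem12}(vi).
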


\begin{proof}


Considering a sum as in the last lemma we have
$$(t_+ \oplus t_-) \mu \Delta_{t_- \oplus t_+}
= (t_- \oplus t_+)  \Delta_{t_- \oplus t_+} =0$$
by the 
rotation homotopy
$V_s (t_- \oplus t_+) V_{-s}$,
where 
$V_s= \Big (\begin{matrix} \cos s & \sin s \\ - \sin s &  \cos s \end{matrix}
\Big ) \oplus 1 \in \call_B( \cale^2) \oplus \tilde A$
for $s \in [0,\pi/2]$
and where we define the $G$-action
on 
$M_2(X[0,1])$ 
for $X=\call_B(\cale \oplus \cale)$ by $(\theta^{V_s} 
)_{s \in [0,\pi/2]}$ 
so that we can apply lemma \ref{lemma123}.
%
%
\if 0
Also $\mu=1$ by a rotation homotopy, as (let us assume) the $M_2$-action is of the form ${\rm Ad}(S \oplus S)$.
\fi
\if 0
Also $\mu=1$ by a rotation homotopy.
, as (let us assume) the $M_2$-action of (\ref{bz2}) is of the form ${\rm Ad}(S \oplus T)$.
\fi
\end{proof}

\begin{theorem}		\label{theorem212}
Every morphism in $GK$ can be represented
as $t_+ \mu \Delta_{t_-} e^{-1}$ (called standard form) for a diagram
as in (\ref{bz2}).
\end{theorem}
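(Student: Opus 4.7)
The plan is to proceed by induction on the complexity of words in $GK$. Every morphism in $GK$ is a finite formal sum $\sum_i \pm w_i$ of pure products of generators, so Lemma \ref{lemma191} (for adding standard forms) together with Corollary \ref{cor142} (for negating them) reduces the claim to showing that each such pure product admits a standard form representation. Induction then proceeds on the word length of such a product.

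For the length-one base case I handle each of the three generator types separately. A $*$-homomorphism $f: A \to B$ is brought into standard form by chaining Lemma \ref{lemma100} (which expresses $f$ as the morphism associated to a trivial double split exact sequence on $B \oplus A$), then Lemma \ref{lemma14} (which transforms it to $\call_B(B) \square A$-form), then Lemma \ref{lemma83} (which embeds into $\call_B(B \oplus \H_B) \square A$), then Lemma \ref{lemma86} (which identifies $B \oplus \H_B \cong \H_B$), and finally post-composing with $e^{-1}$. A synthetic split $\Delta_s: M \to B$ is treated by the same chain after viewing it as part of a double split exact sequence (e.g.\ with the second split equal to the first). An inverse corner embedding $e^{-1}: B \otimes \calk \to B$ is handled by a direct construction of the diagram (\ref{bz2}) with $A := B \otimes \calk$ and $t_\pm$ chosen so that $t_+ \mu \Delta_{t_-} e^{-1} = e^{-1}$.

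For the inductive step, suppose $w$ of length $n+1$ factors as $u \cdot g$, where $u$ has length $n$ and is in standard form $t_+ \mu \Delta_{t_-} e^{-1}$ by induction, and $g$ is a single generator. When $g$ is a $*$-homomorphism, Lemma \ref{lp} fuses the fragment $e^{-1} \cdot g \cdot e'$ (with $e'$ the corner embedding of the target of $g$) into the preceding double split exact sequence morphism, after first recasting the $\call_B(\H_B)$-form back to $\call_B(B)$-form via the reverse of Lemma \ref{lemma83} if needed; the trailing $e'^{-1}$ then restores standard form. When $g$ is a synthetic split $\Delta_s$, Proposition \ref{lemma4} directly absorbs $\Delta_s$ into the double split exact sequence morphism, producing a fresh standard form. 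When $g$ is an inverse corner embedding, Lemma \ref{lemma5} fuses it, replacing the ideal $B \otimes \calk$ by an $M_n$-matrix enlargement, which via Kasparov stabilization (Lemma \ref{lemma86}) is again of the required $\H_B$-form.

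The main obstacle is the bookkeeping of the $M_2$-action $\theta^{U \square 1}$ on the $2 \times 2$-matrix algebra through each fusion, together with the systematic invocation of Lemma \ref{lemma86} to collapse accumulating direct sums of Hilbert modules back to the canonical $\H_B$, so that the resulting diagram has precisely the shape (\ref{bz2}) after every step. Once this verification is carried out, the induction closes; as an immediate consequence one obtains $\calb \circ \cala = \mathrm{id}$ on $GK$, which combined with Lemma \ref{lemma106} yields the universal property of equivariant $KK$-theory (Corollary \ref{cor415}).
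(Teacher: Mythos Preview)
Your overall architecture (induction on word length, with Lemma~\ref{lemma191} and Corollary~\ref{cor142} handling sums) matches the paper, but your inductive step has the composition direction backwards, and this breaks the argument.

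The paper's induction \emph{prepends} a generator: given $w$ in standard form $t_+\mu\Delta_{t_-}e^{-1}:A\to B$, it forms $vw$ with a new generator $v:X\to A$ applied first. Lemma~\ref{lemma215} absorbs a homomorphism $\varphi:X\to A$ on the domain side; Proposition~\ref{lemma4} absorbs a split $\Delta_u:X\to A$ on the domain side; Lemma~\ref{lemma5} absorbs $f^{-1}:M_n(A)\to A$ on the domain side. In each of these lemmas the new generator sits to the \emph{left} of the standard form in the paper's left-to-right convention. Your factorisation $w=u\cdot g$ with $u$ in standard form and $g$ a single generator \emph{appends} $g$ on the codomain side, and then you invoke Proposition~\ref{lemma4} and Lemma~\ref{lemma5} for the $\Delta_s$ and $e^{-1}$ cases. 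But those statements say nothing about fusing a $\Delta_s$ or an $f^{-1}$ applied \emph{after} $t_+\mu\Delta_{t_-}e^{-1}$; they only treat one applied before. The homomorphism case is similarly mismatched: Lemma~\ref{lp} does fuse on the codomain side, but its input is a $\call_B(B)\square A$-form without a trailing $e^{-1}$, not the $\call_B(\H_B)\square A$-form with $e^{-1}$ that your induction hypothesis produces, and ``reversing Lemma~\ref{lemma83}'' is not available.

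The fix is to run the induction the other way, exactly as the paper does, and to use Lemma~\ref{lemma215} (not Lemma~\ref{lp}) for the homomorphism step. Your base case is also more elaborate than necessary: the paper just puts $\mathrm{id}_A$ into standard form via Lemma~\ref{lemma100} and Lemma~\ref{lemma165}(ii), and lets the inductive step cover every generator automatically.
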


\begin{proof}
Write ${\rm id}_A =  {\rm id}_A e e^{-1}$ for the corner embedding $e: A \rightarrow A \otimes \calk$. 
By applying lemma \ref{lemma100} and 
lemma \ref{lemma165}.(ii) (or lemma \ref{lemma14}) 
to ${\rm id}_A e$,
we can present 
the identity homomorphism ${\rm id_A}$ in 
the claimed form.


Assume by induction on $n \ge 1$ that a word $w=w_n \ldots w_1$ of length $n$ in $GK$
allows such a presentation as claimed.
If $v$ is a homomorphism then $v w$ allows also such a presentation by lemma \ref{lemma215}.
Similarly we use 
proposition \ref{lemma4} if $v=\Delta_u$,
and lemma \ref{lemma5} if $v$ 
is the inverse of a corner embedding
to complete the induction step for $vw$.
For sum of words we 
apply lemma \ref{lemma191} 
(or \cite[lemma 3.6]{bremarks} before induction)
and corollary \ref{cor142}.
\end{proof}


\begin{corollary}			\label{cor133}
The assignment $\calb$ is multiplicative, so is a functor.
\end{corollary}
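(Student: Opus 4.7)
The plan is to leverage the two key facts established earlier: Lemma \ref{lemma106} gives $\cala \circ \calb = \mathrm{id}$ on $KK^G$, and Theorem \ref{theorem212} combined with Lemma \ref{lemma165}.(i) will give the reverse identity $\calb \circ \cala = \mathrm{id}$ on $GK$. Once both identities are in hand, multiplicativity of $\calb$ is a formal consequence of multiplicativity of $\cala$.

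First I would establish $\calb \circ \cala = \mathrm{id}$ on $GK$. By Theorem \ref{theorem212}, every morphism $x \in GK(A,B)$ can be written in the standard form $x = t_+\mu \Delta_{t_-} e^{-1}$ for some diagram of the shape (\ref{bz2}). Lemma \ref{lemma165}.(i) then gives immediately $\calb(\cala(x)) = x$. Note this covers the standard form with the mild hypothesis on $\delta$ appearing in that lemma; since any morphism of $GK$ can be so presented, we obtain $\calb \circ \cala = \mathrm{id}_{GK}$.

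Next I would derive functoriality of $\calb$. Given composable $z_1,z_2 \in KK^G$, set $x_i := \calb(z_i)$. By Lemma \ref{lemma106} we have $\cala(x_i) = z_i$. Since $\cala$ is a functor (by construction in Section \ref{sec4}), $\cala(x_1 x_2) = \cala(x_1)\cala(x_2) = z_1 z_2$. Applying $\calb$ and using $\calb \circ \cala = \mathrm{id}$ yields
\[
\calb(z_1 z_2) = \calb(\cala(x_1 x_2)) = x_1 x_2 = \calb(z_1)\calb(z_2).
\]
The identity is handled the same way: $\calb(\mathrm{id}_A) = \calb(\cala(\mathrm{id}_A)) = \mathrm{id}_A$. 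If additivity is also wanted, the identical argument using additivity of $\cala$ gives $\calb(z_1+z_2) = \calb(z_1)+\calb(z_2)$.

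There is no real obstacle remaining; all the substantive work has been carried out in Sections \ref{sec11}--\ref{sec14}. The only point requiring some attention is to verify that Lemma \ref{lemma165}.(i) genuinely applies to \emph{every} standard form produced by Theorem \ref{theorem212}, i.e.\ that the condition $\delta = \alpha \otimes 1$ (or whichever mild normalisation is needed) is either automatic from the construction in the inductive proof of Theorem \ref{theorem212} or can be arranged by a homotopy, so that $\calb \circ \cala = \mathrm{id}$ holds without restriction. Once this bookkeeping is checked, the corollary follows in two lines as above, and $\cala,\calb$ become mutually inverse isomorphisms of categories, yielding as an immediate consequence the universal property of $KK^G$ (corollary \ref{cor415}).
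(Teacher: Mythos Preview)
Your argument is correct and uses exactly the same ingredients as the paper: Lemma~\ref{lemma106}, functoriality of $\cala$, Theorem~\ref{theorem212}, and Lemma~\ref{lemma165}. The only difference is organizational: you first isolate $\calb\circ\cala=\mathrm{id}$ as a general statement and then deduce multiplicativity, whereas the paper applies that same computation inline to the single element $\calb(z)\cdot\calb(w)$ and only afterwards states $\calb\circ\cala=\mathrm{id}$ as Corollary~\ref{corollary222}. Your worry about the hypothesis $\delta=\alpha\otimes 1$ in Lemma~\ref{lemma165}(i) is easily resolved: the standard form produced by Theorem~\ref{theorem212} is by construction a diagram of the shape~(\ref{bz2}), whose $M_2$-action is $\theta^{F\square 1}={\rm Ad}(S\oplus T)\square(\alpha\otimes 1)$, so the condition holds automatically (cf.\ the standing convention at the start of Section~\ref{sec8}).
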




\begin{proof}
By lemma \ref{lemma106}  
we have 
$$\calb(z w)=
\calb \big (\cala(\calb(z)) \cdot \cala(\calb(w)) \big )
= 
\calb \big (\cala \big (\calb(z) \cdot \calb(w) \big ) \big)
$$
for composable morphisms $z,w \in KK^G$.
%
Since by theorem \ref{theorem212} we can
bring $\calb(z) \cdot \calb(w)$ to standard 
form,
by lemma \ref{lemma165} this is
%
%
$\calb(z) \cdot \calb(w)$.

For the unit $1_A = [0 + {\rm id}_A \oplus 0, (\H_A, R_0), 1] \in KK^G(A,A)$
we get $\calb(1_A) = {\rm id}_A$ by lemma \ref{lemma100}.
\end{proof}

\begin{corollary}			\label{corollary222}
$\calb \circ \cala = {\rm id}$.
\end{corollary}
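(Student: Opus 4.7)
The plan is to combine the two main results already established: theorem \ref{theorem212} (every morphism has a standard-form representative) and lemma \ref{lemma165}.(i) (the round-trip $\calb \circ \cala$ is the identity on any morphism written in standard form). On objects both $\cala$ and $\calb$ are the identity, so the only content is on morphisms.

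More precisely: let $x \in GK(A,B)$ be arbitrary. By theorem \ref{theorem212} we may represent $x = t_+ \mu \Delta_{t_-} e^{-1}$ arising from a diagram of the form (\ref{bz2}), that is
$$\xymatrix{
B \ar[r]^e & B \otimes \calk \ar[r]^\kappa & \call_B(\H_B) \square A \ar[r] & A \ar@<.5ex>[l]^{t_\pm}
}$$
with $M_2$-action $\theta^{F \square 1}$. Lemma \ref{lemma165}.(i) applied to such a diagram yields $\calb(\cala(x)) = x$, which is exactly what is needed.

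The only technical point is verifying the hypothesis ``$\delta = \alpha \otimes 1$'' of lemma \ref{lemma165}.(i), which asks that the lower-right corner action in corollary \ref{cor79} be $\alpha \otimes 1_{M_2}$. This is immediate from the construction in definition \ref{defB}: the $M_2$-action $\theta^{F \square 1}$ is by definition ${\rm Ad}(S \oplus T) \square (\alpha \otimes 1_{M_2})$ (see the paragraph following definition \ref{defB}), and the lower-right $A$-corner action reads off as $\alpha \otimes 1_{M_2}$.

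There is no real obstacle beyond invoking these two results in the right order; sums and differences in $GK$ are already absorbed into the single standard-form representative produced by the inductive proof of theorem \ref{theorem212} (which treated sums via lemma \ref{lemma191} and differences via corollary \ref{cor142}), so no separate additivity argument is needed here. Functoriality of $\calb$, established in corollary \ref{cor133}, combined with this pointwise identity, then gives $\calb \circ \cala = {\rm id}$ as functors.
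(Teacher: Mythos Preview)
Your proof is correct and follows the same approach as the paper: bring an arbitrary morphism to standard form via theorem \ref{theorem212}, then invoke lemma \ref{lemma165}. Your explicit verification of the hypothesis $\delta = \alpha \otimes 1$ is a useful clarification the paper leaves implicit; the final appeal to corollary \ref{cor133} is harmless but unnecessary, since once $\calb(\cala(x)) = x$ holds for every morphism $x$ (and both maps are the identity on objects) the equality $\calb \circ \cala = {\rm id}$ of functors follows directly.
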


\begin{proof}
We bring a morphism in $GK$ to standard form
by theorem \ref{theorem212}
and then apply lemma \ref{lemma165}.
\end{proof}

\begin{corollary}				\label{cor415}
The functors $\cala: GK \rightarrow KK^G$ and $\calb:KK^G \rightarrow GK$ are isomorphims of categories and inverses to each other.
In particular, $GK \cong KK^G$.
\end{corollary}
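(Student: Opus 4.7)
The plan is to observe that essentially all the work has already been done in the preceding corollaries and lemmas; the final statement is just an assembly.

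First I would note that $\cala: GK \to KK^G$ is a functor by construction (Definition \ref{def21}), since it is defined additively on the generators in a way that respects all the relations defining $GK$ — this is precisely because $KK^G$ is stable, homotopy invariant and split exact, so the universal property built into the very definition of $GK$ makes $\cala$ well-defined as a functor. Meanwhile, $\calb: KK^G \to GK$ was shown to be a well-defined assignment in Lemma \ref{lemma112}, and upgraded to a functor in Corollary \ref{cor133}.

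Next, I would combine the two identities: Lemma \ref{lemma106} gives $\cala \circ \calb = {\rm id}_{KK^G}$, and Corollary \ref{corollary222} gives $\calb \circ \cala = {\rm id}_{GK}$. Together these say that $\cala$ and $\calb$ are mutually inverse functors, hence isomorphisms of categories, and in particular $GK \cong KK^G$.

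The only step that required real work was Corollary \ref{corollary222}, which relied on Theorem \ref{theorem212} (the standard form representation of morphisms in $GK$) and on Lemma \ref{lemma165}, which verified $\calb(\cala(h \mu \Delta_u e^{-1})) = h \mu \Delta_u e^{-1}$ on morphisms already in standard form. Since the final statement is purely a packaging, there is no further obstacle: the proof is just one or two lines citing these results.

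\begin{proof}
By construction, $\cala$ is a functor from $GK$ to $KK^G$ (Definition \ref{def21}), and by Corollary \ref{cor133}, $\calb$ is a functor from $KK^G$ to $GK$. Lemma \ref{lemma106} shows $\cala \circ \calb = {\rm id}_{KK^G}$, and Corollary \ref{corollary222} shows $\calb \circ \cala = {\rm id}_{GK}$. Hence $\cala$ and $\calb$ are mutually inverse isomorphisms of categories.
\end{proof}
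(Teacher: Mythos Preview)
Your proof is correct and follows essentially the same assembly-of-prior-results approach as the paper, which simply cites lemma \ref{lemma165} and corollary \ref{corollary222}. If anything, your version is slightly more explicit in also invoking lemma \ref{lemma106} for $\cala \circ \calb = {\rm id}$ and corollary \ref{cor133} for the functoriality of $\calb$, which the paper leaves implicit.
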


\begin{proof}
By lemma \ref{lemma165} and corollary \ref{corollary222}.
\end{proof}

\bibliographystyle{plain}
\bibliography{references}

\end{document}